\documentclass[reqno,12pt]{amsart}
\textwidth=14.5cm \oddsidemargin=1cm
\evensidemargin=1cm
\usepackage{amsmath}
\usepackage{amsxtra}
\usepackage{amscd}
\usepackage{amsthm}
\usepackage{amsfonts}
\usepackage{amssymb}
\usepackage{eucal}
\usepackage[matrix,arrow,curve]{xy}
\usepackage{calc}
\usepackage{accents}
\usepackage{graphicx}
\usepackage{arydshln}
\usepackage{fancybox}
\usepackage{ascmac}
\usepackage{color}

\usepackage{ytableau}
\ytableausetup{aligntableaux=top,mathmode,boxsize=1em}
\def\young(#1){\ytableaushort{#1}}
\def\yng(#1){\tiny {\ydiagram{#1}}}


\textwidth 6in
\oddsidemargin 0.25in
\evensidemargin 0.25in
\pagestyle{plain}

%
\numberwithin{equation}{section}
\allowdisplaybreaks

 \newtheorem{dfn}{Definition}[section]
 \newtheorem{thm}[dfn]{Theorem}
 \newtheorem{prp}[dfn]{Proposition}
 \newtheorem{lem}[dfn]{Lemma}
 \newtheorem{cor}[dfn]{Corollary}
 
 \newtheorem{rmk}[dfn]{Remark}
  \newtheorem{con}[dfn]{Conjecture}
    \newtheorem{ex}[dfn]{Example}

 \newtheorem*{ack}{Acknowledgements}


\newcommand{\bbC}{\mathbb{C}}

\newcommand{\bbZ}{\mathbb{Z}}


\newcommand{\beq}{\begin{equation}}
\newcommand{\eeq}{\end{equation}}
\newcommand{\beqa}{\begin{eqnarray}}
\newcommand{\eeqa}{\end{eqnarray}}
\newcommand{\CR}{\nonumber \\}

\newcommand{\floor}[1]{\lfloor#1\rfloor}

\newcommand{\Nk}{{\mathsf N}}

\newcommand{\dsum}[2]{{\displaystyle\sum_{#1}^{#2}}}
\newcommand{\dprod}[2]{{\displaystyle\prod_{#1}^{#2}}}

\newcommand{\ignore}[1]{}

\def\cb#1#2#3{\Big[\begin{array}{c}#1\\#2\end{array}\Big]_{#3}}
\def\ctwo#1{{ \binom{#1}{2} }}

\newcommand{\be}{\begin{equation}}
\newcommand{\ee}{\end{equation}}
\newcommand{\ba}{\begin{eqnarray}}
\newcommand{\ea}{\end{eqnarray}}
\newcommand{\ha}{{\frac 12}}
\newcommand\Frac[2]{\genfrac{}{}{0pt}{}{#1}{#2}}

\newcommand{\vt}{\vartheta}
\newcommand{\vx}{\vec x}
\newcommand{\vlambda}{{\vec\lambda}}
\newcommand{\bC}{{\mathbb C}}
\newcommand{\bZ}{{\mathbb Z}}
\newcommand{\bN}{{\mathbb N}}
%
\def\SS{{\mathcal{H}_{\mathrm S}}}
\def\Bor{\mathcal{B}}

\def\glN{{\widehat{\mathfrak{gl}}_N}}
\def\sres#1{\mathsf{R}(#1)}

%
\newcommand{\ua}{a}
\newcommand{\vb}{b}
\newcommand{\wc}{c}
\newcommand{\tkappa}{ {\kappa} }
\newcommand{\tkappafactor}[1]{ {\kappa^{#1}} }
\newcommand{\fNekN}[7]{N^{{\rm Poch }(#1\vert #2)}_{#3#4}(#5\vert #6,#7)} 
\newcommand{\fNekNs}[7]{\mathsf{N}^{(#1\vert #2)}_{#3#4}(#5\vert #6,#7)} 
\newcommand{\NekZ}[2]{Z^{\rm Poch }_{#1}({#2})}
\newcommand{\NekZs}[2]{{\mathcal Z}_{#1}({#2})}
\newcommand{\NekZp}[2]{Z^{{\rm Poch, pure}}_{#1}({#2})}
\newcommand{\NekZps}[2]{{\mathcal Z}^{{\rm pure}}_{#1}({#2})}
\newcommand{\NekZsc}[3]{{\mathcal Z}_{#1, #2}({#3})}
\newcommand{\NekZpsc}[3]{{\mathcal Z}^{{\rm pure}}_{#1, #2}({#3})}

\newcommand{\ffactor}[4]{f^{(#1\vert #2)}_{#3,#4}} 
\newcommand{\gfactor}[5]{g^{(#1\vert #2)}_{#3,#4}\! \left(#5\right)} 
\newcommand{\fweight}[2]{\vert #1\vert_{#2}}

\newcommand{\Kahlerm}[3]{Q^{#1}_{#2,#3}}
\newcommand{\dbar}[1]{ { \vmargin{.3ex}{0ex}{ \overline{d}_{#1} } }  }

%
\makeatletter
\newcommand{\doublehat}[1]{%
\begingroup%
  \let\macc@kerna\z@%
  \let\macc@kernb\z@%
  \let\macc@nucleus\@empty%
  \hat{\mathchoice%
    {\raisebox{.3ex}{\vphantom{\ensuremath{\displaystyle #1}}}}%
    {\raisebox{.3ex}{\vphantom{\ensuremath{\textstyle #1}}}}%
    {\raisebox{.16ex}{\vphantom{\ensuremath{\scriptstyle #1}}}}%
    {\raisebox{.14ex}{\vphantom{\ensuremath{\scriptscriptstyle #1}}}}%
    \smash{\hat{#1}}}%
\endgroup%
}
\makeatother


\makeatletter
\def\vmargin@#1#2#3{
\setbox0=\hbox{#3}%
\rule[#1]{0pt}{\ht0}%
\lower\dp0\hbox{\rule[-#2]{0pt}{\dp0}}%
\box0%
}
\def\vmargin#1#2#3{
\mathchoice
{\vmargin@{#1}{#2}{$\displaystyle #3$}}
{\vmargin@{#1}{#2}{$\textstyle #3$}}
{\vmargin@{#1}{#2}{$\scriptstyle #3$}}
{\vmargin@{#1}{#2}{$\scriptscriptstyle #3$}}
}
\makeatother


\title{
Non-stationary difference equation \\ and  affine Laumon space III :\\
\medskip
------ Generalization to $\glN$  -----}
\author{Hidetoshi~Awata}
\author{Koji~Hasegawa}
\author{Hiroaki~Kanno}
\author{Ryo~Ohkawa}
\author{Shamil~Shakirov}
\author{Jun'ichi~Shiraishi}
\author{Yasuhiko~Yamada}
\address{H.Awata:~Graduate School of Mathematics, Nagoya University, Nagoya 464-8602, Japan}
\email{awata@math.nagoya-u.ac.jp}
\address{K.Hasegawa:~Mathematical Institute, Tohoku University, Sendai 980-8578, Japan}
\email{kojihas2@gmail.com}
\address{H.Kanno:~Graduate School of Mathematics and Kobayashi-Maskawa Institute, Nagoya University, Nagoya 464-8602, Japan}
\email{kanno@math.nagoya-u.ac.jp}
\address{R.Ohkawa:~Research Institute for Mathematical Sciences, Kyoto University, Kyoto 606-8502, Japan}
\email{ohkawa@kurims.kyoto-u.ac.jp}
\address{Sh.Shakirov:~Institute for Information Transmission Problems, Moscow, Russia}
\email{shakirov.work@gmail.com}
\address{J.Shiraishi:~Graduate School of Mathematical Sciences, University of Tokyo, Komaba, Tokyo 153-8914, Japan}
\email{shiraish@ms.u-tokyo.ac.jp}
\address{Y.Yamada:~Department of Mathematics, Kobe University, Rokko, Kobe 657-8501, Japan}
\email{yamaday@math.kobe-u.ac.jp}

\begin{document}
\begin{flushright}
Revised version
\end{flushright}

\bigskip
\maketitle 
\vspace{-5mm}
\begin{center}
\small \it Dedicated to the memory of Masatoshi Noumi
\end{center}

\vspace{-2mm}

\begin{abstract}
In a series of papers we have considered
a non-stationary difference equation which was originally 
discovered for the deformed Virasoro conformal block. The equation involves mass parameters and,
when they are tuned appropriately,
the equation is regarded as a quantum KZ equation for $U_q(A_{1}^{(1)})$.
We introduce a $\glN$ generalization of the non-stationary difference equation.
The Hamiltonian is expressed in terms of $q$-commuting variables and 
allows both factorized forms and a normal ordered form. 
By specializing the mass parameters appropriately, the Hamiltonian can be identified with
the $R$-matrix of the symmetric tensor representation of $U_q(A_{N-1}^{(1)})$,
which in turn comes from the 3D (tetrahedron) $R$-matrix. 
We conjecture that the affine Laumon partition function of type $A_{N-1}^{(1)}$ gives 
a solution to our $\glN$ non-stationary difference equation. As a check of our conjecture,
we work out the four dimensional limit and find that the non-stationary difference equation reduces 
to the Fuji-Suzuki-Tsuda system.
\end{abstract}

\setcounter{tocdepth}{1}
\tableofcontents

%

\newpage

\section{Introduction}

In \cite{AHKOSSY1} and \cite{AHKOSSY2} we have explored various aspects 
of the non-stationary difference equation;
\begin{equation}\label{Shakirov-eq}
\SS T_{qtQ,x}^{-1}T_{t,\Lambda}^{-1} \cdot \Psi(\Lambda,x)=\Psi(\Lambda,x),
\qquad \Psi(\Lambda,x) = \sum_{m,n \geq 0} c_{m,n} x^m (\Lambda/x)^n, \quad (c_{0,0}=1),
\end{equation}
which was first introduced in \cite{Shakirov:2021krl}.
The Hamiltonian has mass parameters $d_i$ and is given by
\begin{align}\label{Shakirov-Hamiltonian}
\SS =& \frac{1}{\varphi(qx)\varphi(\Lambda/x)} \cdot \Bor \cdot 
\frac{\varphi(\Lambda)\varphi(q^{-1} d_1d_2d_3d_4\Lambda)}{\varphi(-d_1x)\varphi(-d_2x)\varphi(-d_3 \Lambda/x)\varphi(-d_4 \Lambda/x)}
\nonumber \\
&~~\cdot \Bor\cdot \frac{1}{\varphi(q^{-1}d_1d_2x)\varphi(d_3d_4 \Lambda/x)},
\end{align}
where $\varphi(z) := (z;q)_\infty$, $\Bor$ is the $q$-Borel transformation and $T_{\alpha,z}$ denotes the shift operator $z \to \alpha z$. 
Other notations used throughout the paper are summarized in subsection \ref{notations} 
at the end of the introduction.
The non-stationary difference equation \eqref{Shakirov-eq} is related to the quantized discrete Painlev\'e VI equation \cite{AHKOSSY1}.
Namely, the Hamiltonian \eqref{Shakirov-Hamiltonian} is equivalent 
to the Hamiltonian of the discrete Painlev\'e VI equation given by \cite{Hasegawa}, 
in the sense that they have the same adjoint action on the canonical variables $(F,G)$ with $FG=q^{-1}GF$.
On the other hand, if we tune two of the mass parameters, say $d_2=q^{-m},d_3=q^{-n},~m,n \in \mathbb{Z}_{\geq 0}$, 
the equation \eqref{Shakirov-eq} 
can be also identified with the quantum Knizhnik-Zamolodchikov ($q$-KZ) equation for $U_q(\widehat{\mathfrak{gl}}_2)$ with generic spins.
Based on this fact we can prove that the $K$-theoretic Nekrasov partition function\footnote{There is 
a variety of the $K$-theoretic Nekrasov partition functions on the affine Laumon space (see for example \cite{Ohkawa-Shiraishi}).
Among them we consider the partition function with fundamental matter multiplets.}
coming from the affine Laumon space provides a solution to the equation \eqref{Shakirov-eq} \cite{AHKOSSY2}.

In this paper we propose a $\glN$ generalization of the non-stationary Hamiltonian \eqref{Shakirov-Hamiltonian}.
For explicit expressions see Definitions \ref{def:glN Hamiltonian} -- \ref{def:normal-ordered} below.
One of the significant differences from the $N=2$ case is that the arguments of $\varphi$ become $q$-commutative. 
Let us introduce two sets of $q$-commutative variables 
$(\hat{\mathsf{u}}_i,\check{\mathsf{u}}_i)~(i \in \mathbb{Z}/ N \mathbb{Z})$ 
with the following commutation relations;
\begin{equation}\label{q-com-1}
\hat{\mathsf{u}}_i \hat{\mathsf{u}}_{j} = q^{\delta_{i, j-1}- \delta_{i-1,j}} \hat{\mathsf{u}}_{j} \hat{\mathsf{u}}_{i},  
\qquad \check{\mathsf{u}}_i \check{\mathsf{u}}_{j} = q^{\delta_{i-1,j}-\delta_{i, j-1}}\check{\mathsf{u}}_{j} \check{\mathsf{u}}_{i},
\end{equation}
and
\begin{equation}\label{q-com-2}
\hat{\mathsf{u}}_i \check{\mathsf{u}}_j = q^{2 \delta_{i,j} - \delta_{i,j+1} - \delta_{i, j-1}} \check{\mathsf{u}}_j \hat{\mathsf{u}}_i,
\end{equation}
where $\delta_{i,j}$ is the Kronecker delta modulo $N$. 
Note that the matrix which appears in the power of $q$ is the Cartan matrix of ${A}_{N-1}^{(1)}$.
To write down the Hamiltonian of the non-stationary $\glN$ difference equation 
with $N$ commutative variables $x_i~(i \in \mathbb{Z}/ N\mathbb{Z})$, 
we employ the following representation of the algebra generated by  $(\hat{\mathsf{u}}_i,\check{\mathsf{u}}_i)$;
\begin{equation}\label{x-rep}
\hat{x}_i := \alpha_i x_i q^{\vartheta_i - \vartheta_{i-1}}, \qquad \check{x}_i := \beta_i x_i q^{-\vartheta_i + \vartheta_{i-1}},
\end{equation}
where $\vartheta_i := x_i \frac{\partial}{\partial x_i}$ and $\alpha_i, \beta_i$ are arbitrary scaling parameters.
Since the index of $x_i$ is in $\mathbb{Z}/ N\mathbb{Z}$, we will identify $x_0$ with $x_N$ throughout the paper.
From $p_i x_j = q^{\delta_{i,j}} x_j p_i$ with $p_i := q^{\vartheta_i}$, 
we see that $\hat{x}_i$ and $\check{x}_i$ satisfy the commutation relations \eqref{q-com-1} and \eqref{q-com-2}.
The non-stationary $\glN$ Hamiltonian has $3N$ parameters $b_i, d_i, \overline{d}_i~(i \in \mathbb{Z}/ N\mathbb{Z})$.
It also involves the quantum deformation parameter $q$ and the shift parameter $t^{-1}= \kappa^{N}$. 
In the supersymmetric gauge theory, $x_i$ are instanton expansion parameters, $b_i$ are Coulomb moduli and 
$(d_i,\overline{d}_i)$ are mass parameters. The equivariant parameters $(q,t)$ come from 
the torus action $(z_1, z_2) \to (q z_1, \kappa z_2)$ on $\mathbb{C}^2$.

\begin{dfn}[Non-stationary $\glN$ Hamiltonian]\label{def:glN Hamiltonian}
Let 
\begin{equation}\label{dfn:Delta}
\Delta :=\displaystyle{\sum_{i=1}^N} (\vartheta_i^2 - \vartheta_i \vartheta_{i-1}) 
= \frac{1}{2}\displaystyle{\sum_{i=1}^N} (\vartheta_i - \vartheta_{i-1})^2.
\end{equation} 
We define
\begin{equation}\label{eq:glN Hamiltonian}
\mathcal{H}^{\glN}(x_i; b_i, d_i, \overline{d}_i, q, \kappa) 
= q^{\frac{1}{2}\Delta} \cdot \mathcal{A}_L \cdot \mathcal{A}_C \cdot \mathcal{A}_R 
\cdot q^{\frac{1}{2}\Delta} \cdot \mathsf{T}, \\
\end{equation}
where 
\begin{equation}
\mathsf{T} := \prod_{i=1}^N T_{\frac{\kappa b_i}{b_{i+1}}, x_i}.
\end{equation}
The shift operator $\mathsf{T}$ acts on $x_i$ by $x_i \to \frac{\kappa b_i}{b_{i+1}}x_i$ and 
hence on $\Lambda:= x_1 x_2 \cdots x_N$ by $\Lambda \to \kappa^N \Lambda = t^{-1} \Lambda$.  
The middle block of the Hamiltonian is defined by
\begin{equation}
\mathcal{A}_C := \prod_{k=1}^{N} \frac{1}{\varphi(d_k x_k)\varphi(\overline{d}_k x_k)},
\end{equation}
where $\varphi(z) := (z;q)_\infty$. Other blocks $\mathcal{A}_L$ and $\mathcal{A}_R$ are 
given by Definitions \ref{factorized1}, \ref{factorized2} and \ref{def:normal-ordered} below.
\end{dfn}

There are three equivalent definitions of $\mathcal{A}_L= \mathcal{A}_L^{(i)}$ 
and $\mathcal{A}_R=\mathcal{A}_R^{(i)}$ with $i = \mathrm{s}, \mathrm{h}, \mathrm{n}$,
which is one of the remarkable consequences of the fact that $\mathcal{A}_L$ and $\mathcal{A}_R$ involve
the $q$-commutative variables $\hat{x}_i$ and $\check{x}_i$, respectively. 
To define $\mathcal{A}_L$ and $\mathcal{A}_R$,
we choose the scaling parameters of $\hat{x}_i$ and $\check{x}_i$ as $\alpha_i = d_i \overline{d}_i$ and $\beta_i =1$
and set $D_N := \dprod{k=1}{N}~d_k \overline{d}_k$.

\begin{dfn}[Factorized form of simple root type]\label{factorized1}
\begin{align}
\label{eq:A_L}
\mathcal{A}_L^{(\mathrm{s})} &:= \frac{1}{G_L(\check{x})}\frac{1}{\varphi(-\check{x}_{0})}G_L(\check{x}) 
\frac{1}{\varphi(-\check{x}_{N-1})} \cdots \frac{1}{\varphi(-\check{x}_{2})} \frac{1}{\varphi(-\check{x}_{1})}  \varphi(\Lambda), 
\\
\mathcal{A}_R^{(\mathrm{s})} &:= \varphi(q^{1-N} D_N \Lambda) \frac{1}{\varphi(-\hat{x}_1)} 
\frac{1}{\varphi(-\hat{x}_2)} \cdots  \frac{1}{\varphi(-\hat{x}_{N-1})}
G_R(\hat{x}) \frac{1}{\varphi(-\hat{x}_0)} \frac{1}{G_R(\hat{x})},
\label{eq:A_R}
\end{align}
where $G_L(\check{x}) := \varphi(-\check{x}_{1}) \cdots \varphi(-\check{x}_{N-2})$ and 
$G_R(\hat{x}) := \varphi(-\hat{x}_{N-2}) \cdots \varphi(-\hat{x}_1)$ are twisting factors by the adjoint action.
\end{dfn}
\begin{dfn}[Factorized form of higher root type]\label{factorized2}
\begin{align}
\label{eq:A_L-reduction}
\mathcal{A}_L^{(\mathrm{h})} &:= e_q(-\check{x}_0) e_q(-\check{x}_0\check{x}_1) \cdots  e_q(-\check{x}_0 \cdots \check{x}_{N-2}) \cdot
e_q(-\check{x}_{N-1}) \cdots   e_q(-\check{x}_{1}) \cdot \varphi(\Lambda), \\
\mathcal{A}_R^{(\mathrm{h})} &:= \varphi(q^{1-N} D_N \Lambda) \cdot
e_q(-\hat{x}_1) \cdots e_q(-\hat{x}_{N-1}) \cdot e_q(-\hat{x}_{N-2} \cdots \hat{x}_0) \cdots  
e_q(-\hat{x}_1 \hat{x}_0) e_q(-\hat{x}_0),
\label{eq:A_R-reduction}
\end{align}
where $e_q(z) = \varphi(z)^{-1}$ denotes the $q$-exponential function (see subsection \ref{notations}).
\end{dfn}
\begin{dfn}[Normal ordered  form]\label{def:normal-ordered}
\begin{equation}\label{eq:normal-ordered}
(\mathcal{A}_L^{(\mathrm{n})})^{-1} :=~:\!\prod_{i=1}^N \frac{1}{\varphi(\check{x}_i)}\!:, \qquad
\mathcal{A}_R^{(\mathrm{n})}:=~:\!\prod_{i=1}^N \varphi(\hat{x}_i)\!:,
\end{equation}
where $: \quad :$ denotes the normal ordering, which is a linear operator acting on a formal power series in $x_i$.
It is defined as follows;
for any analytic function $F(x,\theta)$ in $2N$ commutative variables $x=\{x_i\}, \theta=\{\theta_i\}$,
we first define $:\!F(x,\theta)\!:$ by the following action on a monomial $x^{\nu}=\prod_{i=1}^N x_i^{\nu_i}$;
\begin{equation}\label{def:normal-ordering}
:F(x,\theta): x^\nu=F(x,\nu)x^\nu,
\end{equation}
and extend it linearly to the space of formal power series.
We call the symbol $: \bullet :$ normal ordering. By an abuse of notation 
the Euler derivatives $\vartheta_i$ are used for the commutative variables $\theta_i$
inside the normal ordering symbol $: \bullet :$;
for example,\begin{equation*}
:\!q^{\vartheta_i} x_i\!: x^\nu =~:\!x_i q^{\vartheta_i}\!: x^\nu =  x_i q^{\nu_i} \cdot x^\nu. 
\end{equation*}
\end{dfn}

In section \ref{sec:2} we will prove the equivalence of three forms of the Hamiltonian.
In subsection \ref{Dynkin-auto} we show the pentagon identity implies the equivalence of
two factorized forms of the Hamiltonian; $\mathcal{A}_L^{(\mathrm{s})}=\mathcal{A}_L^{(\mathrm{h})}$
and $\mathcal{A}_R^{(\mathrm{s})}=\mathcal{A}_R^{(\mathrm{h})}$. 
On the other hand in subsection \ref{subsec:normal} we employ the $q$-binomial theorem 
to prove the equivalence to the normal ordered Hamiltonian; $\mathcal{A}_L^{(\mathrm{h})}=\mathcal{A}_L^{(\mathrm{n})}$
and $\mathcal{A}_R^{(\mathrm{h})}=\mathcal{A}_R^{(\mathrm{n})}$. 
These equalities among the Hamiltonians mean the identities as linear operators 
on the space of formal power series in $x_i$.

\subsection{Several Remarks}

\subsubsection{}
The arguments of the middle block $\mathcal{A}_C$ are commutative variables $x_i$. 
We note that $\hat{x}_1 \cdots \hat{x}_N = q^{-1} D_N \Lambda,~\check{x}_1 \cdots \check{x}_N = q\Lambda,~
\hat{x}_N \cdots \hat{x}_1 = q^{1-N} D_N \Lambda$ and 
$\check{x}_N \cdots \check{x}_1 = q^{N-1} \Lambda$ are central elements in the algebra. 

\subsubsection{}
Compared with the $\widehat{\mathfrak{gl}}_2$ Hamiltonian \eqref{Shakirov-Hamiltonian},
the $q$-Borel transformation $q^{\frac{1}{2}\Delta}$ is moved
to both of the end positions. See Appendix \ref{App-A} for the agreement of the $N=2$ case 
of \eqref{eq:glN Hamiltonian} and \eqref{Shakirov-Hamiltonian}. In $\glN$ case 
if $q^{\frac{1}{2}\Delta}$ is put between the blocks $\mathcal{A}_i$,
the formulas for $\mathcal{A}_i$ will become more involved (see Proposition \ref{Prop 2.10}). 

\subsubsection{}
In terms of the Hamiltonian of normal ordered form, the Schr\"odinger equation for the wave function $\psi$ can be written
the following way;
\begin{align}
(\mathcal{A}_L^{(\mathrm{n})})^{-1} q^{-\frac{1}{2}\Delta} \psi 
&=  \mathcal{A}_C \cdot \mathcal{A}_R^{(\mathrm{n})} \cdot q^{\frac{1}{2}\Delta} \mathsf{T} \psi,
\\
\psi &= \sum_{\theta_1, \ldots, \theta_N =0}^\infty 
c_{\theta_1, \ldots, \theta_N} x_1^{\theta_1} \cdots x_N^{\theta_N}, \quad (c_{0,\ldots,0}=1).
\end{align}
By the gauge transformation of the form $\psi \to \prod_{i} x_i^{\beta_i}\cdot \psi$
with an appropriate scaling of $x_i$ we can eliminate the parameters $b_i$ in the shift operator $\mathsf{T}$
so that the dependence on $b_i$ only appears in the wave function $\psi$. See Remark 2.9 in \cite{AHKOSSY1}
for an explicit example in the case $N=2$.

\subsubsection{}
In Definition \ref{factorized1} of simple root type
the arguments $x_0, x_1, \ldots, x_{N-1}$ of the function $\varphi(z)$ correspond
to the simple roots of the affine algebra $A_{N-1}^{(1)}$ and $\Lambda := x_0x_1 \cdots x_{N-1}$ corresponds to the null root. 
On the other hand in Definition \ref{factorized2} of higher root type, 
the Hamiltonian involves the $q$-exponential factors corresponding to the higher roots.
Note that $\mathcal{A}_L^{(\mathrm{h})}$ and $\mathcal{A}_R^{(\mathrm{h})}$ 
involve $N-1$ $q$-exponentials which correspond to the simple roots of $A_{N-1}$ 
and $N-1$ $q$-exponentials with variables for higher roots of the affine algebra. 
The factorized Hamiltonian of higher root type is more convenient to see the relation 
to the universal $R$ matrix of $U_q(A_{N-1}^{(1)})$ \cite{AHKOSSY}.

\subsubsection{}
An interesting feature of the Hamiltonian of simple root type is 
that the factor corresponding to the last variable $x_N=x_0$ is 
twisted by the adjoint action of $G_L(\check{x}_i)$ or $G_R(\hat{x}_i)$, 
which is the product of $\varphi$ with variables $x_1, \ldots, x_{N-2}$.\footnote{When $N=2$ this is empty.} 
In contrast to the non-affine $\mathfrak{gl}_N$ case,
the cyclic symmetry of the Hamiltonian in $x_1, \ldots, x_N$ is required for the affine $\glN$ case,
which is non-trivial, since the Hamiltonian involves the $q$-commuting variables.
We note that the twisting guarantees the desired cyclic symmetry of the Hamiltonian. 
In subsection \ref{classical-AL-AR}, we give 
a classical analogue of the twisting in $\mathcal{A}_L^{(\mathrm{s})}$ and $\mathcal{A}_R^{(\mathrm{s})}$.
More generally, due to the pentagon identity for  $\varphi(z)^{-1}$ with $q$-commutative variables,
the Hamiltonian $\mathcal{H}^{\glN}$ is invariant under the automorphism of the Dynkin diagram of $A_{N-1}^{(1)}$,
(See Corollary \ref{Cor2.8}).

\subsubsection{}
The $q$-commuting variables $\hat{x}_i$ and $\check{x}_i$ appear in the arguments of the $q$-exponential function $\varphi(z)= e_q(z)^{-1}$.
By using
\begin{equation}
\operatorname{Ad} ( q^{\frac{1}{2}(\vartheta_i - \vartheta_{i-1})^2}) (\alpha_i x_i)^n = q^{\frac{n}{2}} (\hat{x}_i)^n, 
\qquad
\operatorname{Ad} ( q^{-\frac{1}{2}(\vartheta_i - \vartheta_{i-1})^2}) (\beta_i x_i)^n = q^{-\frac{n}{2}} (\check{x}_i)^n, 
\end{equation}
(see \eqref{adjoint-onpower}), we can replace the $q$-exponential functions with $q$-commuting variables by
$\varphi(x_i)= e_q(x_i)^{-1}$ with commuting variables $x_i$. 
For example when $N=3$ the left block $\mathcal{A}_L^{(\mathrm{s})}$ can be written as follows;\footnote{
The arguments of the left block $\mathcal{A}_L$ are $\check{x}_i$ and we have set $\beta_i=1$ 
in the definition of $\mathcal{A}_L$.}
\begin{align}\label{eq:A_Lcom}
& q^{\frac{1}{2}\Delta} \cdot \mathcal{A}_L^{(\mathrm{s})} \cdot \varphi(\Lambda)^{-1} \CR
&= q^{\frac{1}{2}(-\vartheta_1 \vartheta_2 + \vartheta_1 \vartheta_3 + \vartheta_2 \vartheta_3)} 
\varphi(q^{\frac{1}{2}}x_1)^{-1} q^{-\vartheta_1 \vartheta_3} \varphi(q^{\frac{1}{2}}x_3)^{-1}
q^{\vartheta_1 \vartheta_3 -\vartheta_2 \vartheta_3} \varphi(q^{\frac{1}{2}}x_1)  \CR
& \qquad q^{\vartheta_1\vartheta_2} \varphi(q^{\frac{1}{2}}x_2)^{-1} q^{-\vartheta_1 \vartheta_2} \varphi(q^{\frac{1}{2}}x_1)^{-1} 
q^{\frac{1}{2}(-\vartheta_1 \vartheta_3 + \vartheta_1 \vartheta_2 + \vartheta_2 \vartheta_3)} \cdot q^{\frac{1}{2}\Delta}.
\end{align}
Thus, the expense of eliminating $q$-commuting variables from the arguments of $\varphi(z)$ is 
the scattered insertion of the operators of the form $q^{\hbox{\tiny quadratic in}~\vartheta_i}$ 
between the $q$-exponential functions.
Note that the position of the $q$-Borel transformation $q^{\frac{1}{2}\Delta}$ is changed 
from the left of $\varphi(\hat{x}_i)$ to the right of $\varphi(q^{\frac{1}{2}}x_i)$ with commuting variables $x_i$. 
For general $\glN$ case, see subsection \ref{subsec:other-form}.

\subsection{Affine Laumon partition function: Conjecture}

In \cite{AHKOSSY2} we proved that the affine Laumon partition function of type $A_1^{(1)}$ 
provides a solution to the non-stationary difference equation \eqref{Shakirov-eq}.
In general the affine Laumon partition function of type $A_{N-1}^{(1)}$ is defined as follows;
\begin{dfn}[Affine Laumon partition function]\label{def:affine-Laumon}
The affine Laumon partition function of type $A_{N-1}^{(1)}$ is
a summation over $N$-tuples of partitions $\vec{\lambda} = (\lambda^{(1)}, \ldots, \lambda^{(N)})$;
\begin{align*}
&\mathcal{Z}_{\mathrm{AL}}^{\glN}
\left( \left.\left.\begin{array}{ccc}a_1,\ldots, a_N \\b_1, \ldots ,b_N\\c_1,\ldots, c_N\end{array} 
\right| \mathsf{x}_1, \cdots, \mathsf{x}_N \right|q, \kappa\right) \CR
&= \sum_{\vec{\lambda}}
\prod_{i,j=1}^N \frac{\Nk^{(j-i\vert N)}_{\emptyset,\lambda^{(j)}}(a_i/b_j |q, \kappa)
\Nk^{(j-i\vert N)}_{\lambda^{(i)},\emptyset}(b_i/c_j|q,  \kappa)}
{\Nk^{(j-i\vert N)}_{\lambda^{(i)},\lambda^{(j)}}(b_i/b_j |q, \kappa)} \cdot \mathsf{x}_1^{k_1(\vec{\lambda})} \cdots \mathsf{x}_N^{k_N(\vec{\lambda})},
\end{align*}
where $\mathsf{N}_{\lambda, \mu}^{(k\vert N)} (u \vert q, \kappa)$ is 
the orbifolded Nekrasov factor with color $k$ (see Definition 6.3 in \cite{AHKOSSY1}); 
\begin{align*}
&\Nk^{(k|N)}_{\lambda,\mu}(u|q,\kappa)=
\Nk^{(k)}_{\lambda,\mu}(u|q,\kappa)\\
=&
 \prod_{j\geq i\geq 1 \atop j-i \equiv k \,\,({\rm mod}\,N)}
[u q^{-\mu_i+\lambda_{j+1}} \kappa^{-i+j};q]_{\lambda_j-\lambda_{j+1}}
\cdot
\prod_{\beta\geq \alpha \geq 1  \atop \beta-\alpha \equiv -k-1 \,\,({\rm mod}\,N)}
[u q^{\lambda_{\alpha}-\mu_\beta} \kappa^{\alpha-\beta-1};q]_{\mu_{\beta}-\mu_{\beta+1}}, 
\end{align*}
with
\begin{align*}
&[u;q]_n=u^{-n/2}q^{-n(n-1)/4} (u;q)_n\\
&= (u^{-1/2}-u^{1/2})(q^{-1/2}u^{-1/2}-q^{1/2}u^{1/2})\cdots 
(q^{-(n-1)/2}u^{-1/2}-q^{(n-1)/2}u^{1/2}).
\end{align*}
The powers of the expansion parameters $\mathsf{x}_i$ are given by the number of boxes with a fixed color;
\begin{equation}\label{box-counting}
k_i(\vec{\lambda}) = \sum_{\alpha + \beta \equiv i+1} \vert \lambda^{(\alpha)}\vert_{\beta}, \qquad 
\vert \lambda^{(\alpha)} \vert_\beta := \sum_{k \in \mathbb{Z}} \lambda^{(\alpha)}_{\beta+Nk},
\end{equation}
where we denote the components of $\lambda^{(\alpha)}$ by 
$(\lambda^{(\alpha)}_1 \geq \lambda^{(\alpha)}_2 \geq \cdots)$ and set
$\lambda^{(\alpha)}_{i}=0$ for $i \leq 0$.
\end{dfn}

Now we are ready to present our main claim in this paper.

\begin{con}\label{con:1.6}
The affine Laumon partition function provides a solution to the non-stationary difference equation;
\begin{equation}
\mathcal{H}^{\glN}(x_i; b_i, d_i, \overline{d}_i, q, \kappa) \psi = \psi, \qquad \psi= \sum_{\theta_1, \ldots, \theta_N =0}^\infty 
c_{\theta_1, \ldots, \theta_N} x_1^{\theta_1} \cdots x_N^{\theta_N}, \quad (c_{0,\ldots,0}=1),
\label{eq:EigenEq}
\end{equation}
where $\psi$ is the  $\glN$ Laumon partition function in the following parametrization
$$
\psi = \mathcal{Z}_{\mathrm{AL}}^{\glN}
\left( \left.\left.\begin{array}{ccc} \frac{q\kappa b_N}{d_N}, \frac{q\kappa b_1}{d_1}, \ldots, \frac{q\kappa b_{N-1}}{d_{N-1}} 
\\b_1, b_2, \ldots, b_N \\ \frac{b_1}{\vmargin{.3ex}{0ex}{\overline{d}_{1}}}, 
\frac{b_2}{\vmargin{.3ex}{0ex}{\overline{d}_{2}}}, \ldots, \frac{b_N}{\vmargin{.3ex}{0ex}{\overline{d}_{N}}} \end{array} 
\right| \sqrt{\frac{b_2d_1 \overline{d}_{1}}{q \kappa b_1}}x_1, \sqrt{\frac{b_3d_2 \overline{d}_{2}}{q \kappa b_2}}x_2 , 
\ldots, \sqrt{\frac{b_1d_N \overline{d}_{N}}{q \kappa b_N}}x_N \right| q, \kappa \right).
$$
\end{con}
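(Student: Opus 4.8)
\emph{Sketch of a strategy toward Conjecture~\ref{con:1.6}.}
The plan is to mirror the argument used for $N=2$ in \cite{AHKOSSY2}: turn the eigenvalue equation \eqref{eq:EigenEq} into an explicit recursion for the coefficients $c_{\theta_1,\ldots,\theta_N}$, and verify that the orbifolded Nekrasov factors of the affine Laumon sum satisfy precisely this recursion with the stated parametrization of $\psi$.

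First I would work with the normal ordered form of the Hamiltonian, using the equivalence of $\mathcal{A}_L^{(\mathrm{n})},\mathcal{A}_R^{(\mathrm{n})}$ with the factorized forms proved in Section~\ref{sec:2}. Since normal ordering replaces each $\vartheta_i$ by the eigenvalue $\theta_i$ on a monomial $x^\theta$, equation \eqref{eq:EigenEq} becomes, after clearing the factors $q^{\pm\frac{1}{2}\Delta}$ and the shift $\mathsf{T}$, an identity between two infinite products of $\varphi$'s with shifted arguments acting on the formal series $\psi$. Expanding $\varphi(z)^{-1}=e_q(z)=\sum_{n\geq 0} z^n/(q;q)_n$ and collecting the coefficient of a fixed monomial $x_1^{\theta_1}\cdots x_N^{\theta_N}$ yields a linear recursion determining $c_{\theta_1,\ldots,\theta_N}$ from the $c$'s of smaller total degree, with coefficients rational in $q,\kappa,b_i,d_i,\overline{d}_i$.

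Second, on the Laumon side I would use the box-adding structure of the Nekrasov factors. The key combinatorial input is the behaviour of $\Nk^{(k\vert N)}_{\lambda,\mu}$ under the addition of a single box of fixed color to one of the partitions $\lambda^{(\alpha)}$; the resulting ratio is again a product of the elementary factors $[u;q]_n$ and should reorganize into exactly the hook-type weights appearing on the Hamiltonian side. One then has to establish the generating identity that summing these box-adding ratios over all ways of increasing $k_i(\vec{\lambda})$ reproduces the recursion of the first step --- the $\glN$ analogue of the Pieri-type ($q$-Selberg) identities underlying the $N=2$ proof. Matching the two sides is what forces the precise parametrization of $\psi$ written in the conjecture.

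Third, as consistency checks I would verify the three degenerations signposted in the paper: (i) setting $N=2$ must reproduce \eqref{Shakirov-eq} and the theorem of \cite{AHKOSSY2} (compare Appendix~\ref{App-A}); (ii) tuning a pair of mass parameters $d_j,\overline{d}_j$ to negative integer powers of $q$ should truncate the Laumon sum and identify \eqref{eq:EigenEq} with the $q$-KZ equation for the symmetric tensor representation of $U_q(A_{N-1}^{(1)})$, for which the Laumon function is a known vertex-operator solution; (iii) the four-dimensional limit should reduce the equation to the Fuji--Suzuki--Tsuda system. The main obstacle I expect is the second step: controlling the $q$-commuting variables $\hat{x}_i,\check{x}_i$ and the twisting by $G_L,G_R$ when passing to the explicit recursion, and matching it with the combinatorics of the orbifolded Nekrasov factors --- because the color counting $k_i(\vec{\lambda})=\sum_{\alpha+\beta\equiv i+1}\vert\lambda^{(\alpha)}\vert_\beta$ couples all $N$ partitions, the recursion does not factorize and one must identify the right intermediate identities. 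A cleaner route, if it can be made to work, is to deduce \eqref{eq:EigenEq} directly from an intertwining property of the universal $R$-matrix of $U_q(A_{N-1}^{(1)})$ using the higher-root factorized form $\mathcal{A}_L^{(\mathrm{h})},\mathcal{A}_R^{(\mathrm{h})}$ together with a vertex-operator realization of the affine Laumon partition function; the pentagon identity quoted here is exactly what makes the two factorized forms interchangeable, so this stays within the toolkit already assembled in the paper.
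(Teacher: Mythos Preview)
The statement is a \emph{conjecture} in the paper, not a theorem: the authors do not give a proof for general $N$. What the paper offers is evidence---a direct verification for $N=1$ (the lines around \eqref{gl1case}--\eqref{gl1partition-function}), a citation to \cite{AHKOSSY2} for $N=2$, and the consistency check in Section~\ref{sec:4d-limit} that the four-dimensional limit reproduces the Fuji--Suzuki--Tsuda system. Your ``consistency checks'' (i) and (iii) match these exactly; item (ii) matches Section~\ref{sec:mass-truncation}.

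Where your proposal and the paper diverge is in the suggested route to a full proof. The paper's own attempt, in Section~\ref{sec:affine-Laumon}, is to extend the $N=2$ argument of \cite{AHKOSSY2} by rewriting the Laumon sum as a Jackson integral of symmetric Selberg type and identifying the cocycles with Matsuo--Varchenko bases; the authors explicitly note that this only sees the $U_q(\widehat{\mathfrak{sl}}_2)$ $q$-KZ structure (shifts in mass parameters, not in $x_i$), and that the missing ingredient is a duality between the $U_q(\widehat{\mathfrak{sl}}_2)$ and $U_q(\widehat{\mathfrak{sl}}_N)$ $q$-KZ equations. Your main proposal---extract a coefficient recursion from the normal ordered Hamiltonian and match it against box-adding ratios of orbifolded Nekrasov factors via a $\glN$ Pieri/$q$-Selberg identity---is a genuinely different, more direct combinatorial strategy. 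It is plausible in outline, but you should be aware that the paper nowhere establishes (or even formulates) such a Pieri-type identity for the orbifolded factors; this is the hard step, and your proposal does not indicate how to prove it. Your alternative route via the universal $R$-matrix and a vertex-operator realization is closer in spirit to what the authors flag as forthcoming work \cite{AHKOSSY}, but again is not carried out here.
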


Firstly, let us make a remark on the cases $N=1,2$.
The ${\widehat{\mathfrak{gl}}_2}$ case of the conjecture was proved in \cite{AHKOSSY2}. 
We can see it is also valid for $N=1$ as follows;
Dropping the indices, we simply write 
$x=x_1,d=d_1,\overline{d}=\overline{d}_1$, etc. 
The Hamiltonian is simplified to 
\begin{equation}
\mathcal{H}^{\widehat{\mathfrak{gl}}_1}= { \varphi(x)\varphi(d\overline{d}x)\over \varphi(d x)\varphi(\overline{d}x)}T_{\kappa,x}
=\exp\left(- \sum_{n=1}^\infty {1\over n}
{(1-d^n)(1-\overline{d}^n) \over (1-q^n)} x^n\right)T_{\kappa,x},
\end{equation}
since $\Delta=0$, $\mathsf{T}= T_{\kappa,x}$, 
$\mathcal{A}_C={1\over \varphi(dx)\varphi(\overline{d}x)}$,
$\mathcal{A}_L=\varphi(x)$, and $\mathcal{A}_R=\varphi(d\overline{d}x)$.
Hence, it is easy to see that the equation and the solution read
\begin{equation}\label{gl1case}
\mathcal{H}^{\widehat{\mathfrak{gl}}_1} \psi=\psi,\qquad 
\psi=\exp\left(- \sum_{n=1}^\infty {1\over n}
{(1-d^n)(1-\overline{d}^n) \over (1-q^n)(1-\kappa^n)} x^n\right).
\end{equation}
On the other hand, we have an impressive (double infinite product) expression for the ${\widehat{\mathfrak{gl}}_1}$
affine Laumon partition function 
\begin{equation}\label{gl1partition-function}
\mathcal{Z}^{\widehat{\mathfrak{gl}}_1}_{\rm AL}
\left( \begin{array}{c} a \\ b\\c \end{array} \Biggl|\,\,\mathsf{x}\,\,\Biggl|q,\kappa\right)=
\exp\left( \sum_{n=1}^\infty {1\over n}
{[b^n/c^n][a^n/q^n\kappa^n b^n]  \over  [q^n][\kappa^n]}\mathsf{x}^n\right),
\end{equation}
where we used the symbol $[x]=x^{-1/2}-x^{1/2}$.
As for a proof of the identity \eqref{gl1partition-function}, see e.g. Proposition 4.17 in \cite{Ohkawa-Shiraishi}.
Note that in  idem., the Nekrasov partition function is defined by the 
ordinary Pochhammer symbol $(a;q)_n$ (as  eq.(72) in idem.), instead of the shifted product of 
hyperbolic sine functions $[a;q]_n=[a][qa]\cdots [q^{n-1}a]$ defined in Definition 1.5.
Comparison of these is achieved by applying Proposition B.1.
We conclude that the solution $\psi$ to the equation $\mathcal{H}^{\widehat{\mathfrak{gl}}_1} \psi=\psi$
is given by the affine Laumon function as
\begin{equation}
\mathcal{Z}^{\widehat{\mathfrak{gl}}_1}_{\rm AL}
\left( \begin{array}{c} a\kappa b/d \\ b\\b/\overline{d}\end{array} \Biggl|\,\,
\sqrt{d\overline{d}\over q\kappa} 
x\,\,\Biggl|q,\kappa\right)=
\exp\left(- \sum_{n=1}^\infty {1\over n}
{(1-d^n)(1-\overline{d}^n) \over (1-q^n)(1-\kappa^n)} x^n\right)=\psi.
\end{equation}


Secondly, some remarks are in order concerning the verification of Conjecture \ref{con:1.6} for $N \geq 3$. 
For simplicity, we set
$$
{\mathcal A}_L^{-1} q^{-{1\over 2}\Delta}\psi- 
{\mathcal A}_C {\mathcal A}_R q^{{1\over 2}\Delta}\mathsf{T}\psi
=\sum_{\theta_1,\ldots,\theta_N =0}^{\infty} e_{\theta_1,\ldots,\theta_N}x_1^{\theta_1} \cdots x_N^{\theta_N}.
$$
By straightforward brute-force calculations using Mathematica, 
we can show that $ e_{\theta_1,\ldots,\theta_N}=0$ at least for small $N$ and 
lower total degree $\mathsf{d}=\dsum{i=1}{N} \theta_i$.
For $N=3$, we have checked that $e_{\theta_1,\theta_2,\theta_3}=0$ for $0\leq \mathsf{d}\leq 5$,
treating all eleven parameters $d_i, \overline{d}_i ,b_i ~(i=1,2,3),q$ and $\kappa$ as indeterminates.
For $6 \leq \mathsf{d}$, however, such a task becomes more computationally demanding and practically intractable. 
Instead, substituting random integers for all eleven parameters,
we observe that $e_{\theta_1,\theta_2,\theta_3}=0$ for $6\leq \mathsf{d}\leq 15$, which means up to instanton number five. 
Similarly, for $N=4$, we find that $e_{\theta_1,\theta_2,\theta_3,\theta_4}=0$ for $0 \leq \mathsf{d}\leq 9$,
when we specialize the fourteen parameters in the same manner; for $N=5$ 
the same method works for $0 \leq \mathsf{d}\leq 7$.


\subsection{Mass truncation and relation to the $R$ matrix}

In ${\widehat{\mathfrak{gl}}_2}$ case \cite{AHKOSSY2}, we introduced the mass parameter truncation where half of the mass parameters are
set to the form $q^{-n}~(n \in \mathbb{Z}_{\geq 0})$. After the mass parameter truncation the non-stationary difference equation 
\eqref{Shakirov-eq} is identified with the quantum KZ equation. Namely if we remove the shift operator $\mathsf{T}$ from the Hamiltonian,
it gives the (finite dimensional) $R$ matrix of $U_q(A_1^{(1)})$ with generic spins.
Based on the normal ordered Hamiltonian \eqref{def:normal-ordered}, we can show the same story for $\glN$ case. 
It is quite remarkable the resulting finite dimensional $R$ matrix of $U_q(A_{N-1}^{(1)})$ is related the three dimensional (tetrahedron) $R$ matrix \cite{Kuniba-book}. 
In the formula of the components of the three dimensional $R$ matrix there appears a basic building block $\Phi_q$ defined by \eqref{def:Phi_q} 
(see \cite{Kuniba-book}, \S 13.5). 
We find the same function in our formula of the components of the $R$ matrix (see Corollary \ref{Cor:R-component}).

\subsection{Four dimensional limit}
The four dimensional (cohomological) version of the affine Laumon partition function of type 
${\widehat{\mathfrak{gl}}_2}$ 
satisfies a quantization of the differential Painlev\'e equation $P_{\rm VI}$ (see e.g.\cite{AFKMY} and references therein).  
In \cite{AHKOSSY1} we have seen how the non-stationary difference equation \eqref{Shakirov-eq} provides a way to 
up grade the story to five dimensional/$q$-difference version. 
In four dimensional/differential situation, the generalization to $\glN$ case was also considered in \cite{Yamada:2010rr}
where  a quantization of a particular kind of higher rank generalization of $P_{\rm VI}$ (called Fuji-Suzuki-Tsuda system)  
was studied  as the relevant equation.
One can check that  the four dimensional limit of Conjecture \ref{con:1.6} is consistent with  the result in \cite{Yamada:2010rr}.

\subsection{Organization of the paper}
The present paper is organized as follows;
In section 2, by using the pentagon identity for the $q$-exponential function $e_q(z) = \varphi(z)^{-1}$
and the $q$-binomial theorem we prove that there are three equivalence forms of the $\glN$ Hamiltonian \eqref{eq:glN Hamiltonian};
two kinds of the factorized form and the normal ordered form. Each form has its own advantage. 
We also show that the Hamiltonian is invariant under the action of the Dynkin automorphisms of $A_{N-1}^{(1)}$.
We consider the mass truncation in section 3. Namely we tune half of the mass parameters in the Hamiltonian so that we can
extract finite dimensional blocks of the $R$-matrix. We find an interesting relation to the tetrahedron (3D) $R$-matrix.
In \cite{AHKOSSY2} Conjecture \ref{con:1.6} for $N=2$ was proved by rewriting
the affine Laumon partition function in the form of the Jackson integral. In section 4 we show that
it is straightforward to apply the same method to $N \geq 3$, though it does not lead to 
a proof of Conjecture \ref{con:1.6} at the moment.
Finally in section 5, we show that a four dimensional limit of our system is nothing but the Fuji-Suzuki-Tsuda system,
which is consistent with the conjecture. Some of technical details and miscellaneous topics are collected in appendices.

\subsection{Notations and convention}
\label{notations}
We will use the following notations throughout the paper \cite{GR};
\begin{equation}
\label{inf-prod}
\varphi(x):= (x;q)_\infty = \prod_{n=0}^\infty (1-x q^n) 
=\exp\left(-\sum_{n=1}^\infty \frac{1}{n} \frac{1}{1-q^n} x^n \right), \quad |x| < 1, \quad |q|<1.
\end{equation}
The $q$-shifted factorial is defined by 
\begin{equation}\label{q-factorial}
(x;q)_n = \frac{(x;q)_\infty}{(xq^n ; q)_\infty}.  
\end{equation}
The following formula is useful;
\begin{equation}\label{q-factorial-inversion}
(x;q)_n=(-x)^n q^{\frac{1}{2}n(n-1)}\frac{1}{(qx^{-1};q)_{-n}}, \qquad n \in \mathbb{Z}.
\end{equation}

We employ the formulas of two $q$-exponential functions \cite{GR};
\begin{align}
\label{q-exp1}
e_q(z) &= \sum_{n=0}^\infty \frac{z^n}{(q;q)_n} = \varphi(z)^{-1}, \quad |z| <1, \quad |q| <1, \\
\label{q-exp2}
E_q(z) &= \sum_{n=0}^\infty \frac{q^{\frac{1}{2}n(n-1)}z^n}{(q;q)_n} = \varphi(-z), \quad |z| <1, \quad |q| <1.
\end{align}
Finally, the $q$-binomial coefficients are defined by
\begin{equation}\label{q-binom}
\cb{n}{k}{q}:= \frac{(q;q)_n}{(q;q)_{k} (q;q)_{n-k}}.
\end{equation}
We have
\begin{equation}
(x;q)_n = \sum_{k=0}^n \cb{n}{k}{q} (-x)^k q^{\frac{1}{2}k(k-1)}.
\end{equation}

The partition function on the gauge theory side is computed by the localization for the torus action.
On the four dimensional space-time the action is $\mathbb{R}^4 \simeq \mathbb{C}^2 \ni (z_1, z_2) \longrightarrow (q_1z_1, q_2z_2)$.
In this paper we regard the equivariant parameters\footnote{The factor $1/N$ in the definition of $q_2$ comes from the $\mathbb{Z}_N$ orbifold 
action on $z_2$, which is an effective way of introducing a surface defect at the divisor $z_2=0$.}
\begin{equation}
q_1 := e^{\epsilon_1}, \qquad q_2 := \kappa = t^{-\frac{1}{N}} =  e^{\frac{\epsilon_2}{N}},
\end{equation}
as the canonical parameters of the theory. They are natural parameters of the quantum toroidal algebras. 
We simply denote $q=q_1$ unless otherwise mentioned. 


\section{Non-stationary $\glN$ difference equation}
\label{sec:2}
\subsection{Pentagon identity and Dynkin automorphisms of $A_{N-1}^{(1)}$}
\label{Dynkin-auto}

By using the pentagon identity for the $q$-exponential function $e_q(z)= \varphi(z)^{-1}$,
we can recast the blocks $ \mathcal{A}_L^{(\mathrm{s})}$ and $\mathcal{A}_R^{(\mathrm{s})}$ 
of the Hamiltonian \eqref{eq:glN Hamiltonian} of factorized form of simple root type 
so that the correspondence to the factorization of the universal $R$-matrix becomes clear \cite{AHKOSSY}.
The pentagon identity also allows us to see that the $\glN$ Hamiltonian is actually symmetric in variables $x_i$.

\begin{prp}[\cite{Kirillov}]
For $q$-commutative variables $a,b$ with $ab=q ba$,
The $q$-exponential function $e_q(z)= \varphi(z)^{-1}$ satisfies the pentagon identity;
\begin{equation}\label{pentagon}
e_q(-a) e_q(-b) = e_q(-b) e_q(-ba) e_q(-a). 
\end{equation}
\end{prp}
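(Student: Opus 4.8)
The statement to prove is the pentagon identity \eqref{pentagon} for $q$-commutative variables $a,b$ with $ab = qba$: namely
\[
e_q(-a)\,e_q(-b) = e_q(-b)\,e_q(-ba)\,e_q(-a).
\]

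\textbf{Overall strategy.} The plan is to verify the identity by expanding both sides as formal power series in $a$ and $b$ and comparing coefficients of each monomial $b^m a^n$, using the $q$-commutation relation $ab = qba$ to bring every term into a normal-ordered form with all $b$'s to the left of all $a$'s. Recall $e_q(z) = \sum_{k\ge 0} z^k/(q;q)_k$, so $e_q(-a) = \sum_{n\ge0} (-1)^n a^n/(q;q)_n$ and similarly for the other factors. The key combinatorial input will be the $q$-binomial theorem (equivalently, the identity $e_q(-a)e_q(-b) = e_q(-(a+b))$ when $a,b$ commute is false here, so instead one uses the finite $q$-binomial expansion), which is available in the paper via \eqref{q-binom} and is invoked later in the text.

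\textbf{Key steps in order.} First I would normal-order the left-hand side: from $ab = qba$ one gets $a^n b^m = q^{nm} b^m a^n$, hence
\[
e_q(-a)e_q(-b) = \sum_{m,n\ge0} \frac{(-1)^{m+n}}{(q;q)_m (q;q)_n}\, a^n b^m
= \sum_{m,n\ge0} \frac{(-1)^{m+n} q^{mn}}{(q;q)_m (q;q)_n}\, b^m a^n.
\]
Second, I would expand the right-hand side. Write $e_q(-ba) = \sum_{k\ge0} (-1)^k (ba)^k/(q;q)_k$, and observe that $(ba)^k = q^{-\binom{k}{2}} b^k a^k$ since each time we move an $a$ past a $b$ to collect powers we pick up a factor of $q^{-1}$ (more precisely $ba \cdot ba = b (ab) a = q\, b(ba)a = q\, b^2 a^2$, wait — with $ab=qba$ one has $ab = qba$, so $(ba)(ba) = b(ab)a = b(qba)a = q b^2 a^2$, giving $(ba)^k = q^{\binom{k}{2}} b^k a^k$; I would pin down this exponent carefully). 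Then
\[
e_q(-b)e_q(-ba)e_q(-a) = \sum_{l,k,n\ge0} \frac{(-1)^{l+k+n} q^{\binom{k}{2}}}{(q;q)_l (q;q)_k (q;q)_n}\, b^l\, b^k a^k\, a^n,
\]
and after again using $a^k b^{\text{nothing}}$ — here the middle term is already $b^k a^k$ so no further reordering past $b$'s on the left is needed, but the $a^k$ must be moved to the right of nothing; so this is already $\sum b^{l+k} a^{k+n}$ times the coefficient. Setting $m = l+k$, $N = k+n$, the coefficient of $b^m a^N$ on the right is
\[
\sum_{k=0}^{\min(m,N)} \frac{(-1)^{m+N-k} q^{\binom{k}{2}}}{(q;q)_{m-k}(q;q)_k (q;q)_{N-k}}.
\]
Third, I would reduce the claimed equality to the purely combinatorial identity
\[
\frac{(-1)^{m+N} q^{mN}}{(q;q)_m (q;q)_N}
= \sum_{k=0}^{\min(m,N)} \frac{(-1)^{m+N-k} q^{\binom{k}{2}}}{(q;q)_{m-k}(q;q)_k (q;q)_{N-k}},
\]
i.e. $q^{mN} = \sum_k (-1)^k q^{\binom{k}{2}} \binom{m}{k}_q \binom{N}{k}_q (q;q)_k \cdot \frac{(q;q)_m(q;q)_N}{(q;q)_m(q;q)_N}$ — cleaning up, $q^{mN} = \sum_{k} (-1)^k q^{\binom{k}{2}} \qbinom{m}{k}_q \qbinom{N}{k}_q (q;q)_k$, which is a standard $q$-series identity (a form of the $q$-Vandermonde / $q$-Chu identity, provable by induction on $m$ using the Pascal recursion $\qbinom{m}{k}_q = \qbinom{m-1}{k}_q + q^{m-k}\qbinom{m-1}{k-1}_q$, or directly from the $q$-binomial theorem applied twice).

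\textbf{Main obstacle.} The genuine content is entirely in the last step — verifying the scalar $q$-identity $q^{mN} = \sum_{k=0}^{\min(m,N)} (-1)^k q^{\binom{k}{2}} \qbinom{m}{k}_q \qbinom{N}{k}_q (q;q)_k$ — and in getting the $q$-powers from the reorderings $(ba)^k = q^{\binom{k}{2}} b^k a^k$ and $a^n b^m = q^{nm} b^m a^n$ exactly right, since an error of even one unit in an exponent silently breaks the match. Everything else is bookkeeping: expand, normal-order, collect, compare. An alternative, perhaps cleaner route that I would keep in reserve is to avoid coefficient-chasing and instead use the operator/representation-theoretic proof — realize $a,b$ as $a = x$, $b = p$ with $px = qxp$ on polynomials, and deduce the identity from the known functional equations of $e_q$ (the $q$-difference equation $e_q(z) = (1-z)^{-1} e_q(qz)$ combined with uniqueness of solutions) — but the series-comparison proof is the most self-contained given the tools (the $q$-binomial coefficient \eqref{q-binom}) already introduced in the paper, so that is the one I would write out.
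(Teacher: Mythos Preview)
The paper does not prove this proposition; it is quoted from \cite{Kirillov} as a known result, so there is no paper proof to compare against.

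Your strategy --- expand, normal-order everything to $b^m a^n$, and compare scalar coefficients --- is one of the standard proofs and it works. Your bookkeeping is right (in particular $(ba)^k=q^{\binom{k}{2}}b^ka^k$ under $ab=qba$), and the residual scalar identity
\[
\frac{q^{mN}}{(q;q)_m(q;q)_N}=\sum_{k\ge 0}\frac{(-1)^kq^{\binom{k}{2}}}{(q;q)_{m-k}(q;q)_k(q;q)_{N-k}}
\]
is not an obstacle but a one-liner from the tools already in the paper: multiply by $z^N$, sum over $N\ge 0$, and use \eqref{q-exp1} on the left to get $e_q(q^m z)=1/(q^m z;q)_\infty$, while on the right the finite $q$-binomial theorem $\sum_{k=0}^m\cb{m}{k}{q}(-z)^kq^{\binom{k}{2}}=(z;q)_m$ (this is \eqref{q-exp2} in truncated form) times $e_q(z)=1/(z;q)_\infty$ gives the same thing. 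So the ``genuine content'' you worry about dissolves into \eqref{q-exp1}--\eqref{q-exp2}; no induction or $q$-Vandermonde machinery is needed, and the operator-theoretic alternative you mention is overkill here.
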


Since the $q$-commutative variables $\hat{x}_i$ and $\check{x}_i$
satisfy $\hat{x}_i \hat{x}_j = q^{\delta_{i,j-1}-\delta_{i-1,j}}\hat{x}_j \hat{x}_i$ and 
$\check{x}_i \check{x}_j = q^{\delta_{i-1,j}-\delta_{i,j-1}}\check{x}_j \check{x}_i$,
we obtain
\begin{align}
\label{pentagon1}
e_q(-\hat{x}_i) e_q(-\hat{x}_{i+1})
&= e_q(-\hat{x}_{i+1}) e_q(-\hat{x}_{i+1}\hat{x}_i)  e_q(-\hat{x}_i), \\
\label{pentagon2}
e_q(-\check{x}_{i+1}) e_q(-\check{x}_i)
&= e_q(-\check{x}_i) e_q(-\check{x}_i\check{x}_{i+1})  e_q(-\check{x}_{i+1}).
\end{align}
\begin{lem}\label{lem:adjoint-reduction}
For any $N \geq 3$,
\begin{align}
\label{adjoint-reduction1}
& e_q(-\hat{x}_{N-2})^{-1} \cdots e_q(-\hat{x}_{1})^{-1} e_q(-\hat{x}_{0}) e_q(-\hat{x}_{1}) \cdots e_q(-\hat{x}_{N-2}) \CR
& \qquad = e_q(-\hat{x}_{N-2}\cdots \hat{x}_0) \cdots e_q(-\hat{x}_1\hat{x}_0) e_q(-\hat{x}_0) , \\
\label{adjoint-reduction2}
& e_q(-\check{x}_{N-2}) \cdots e_q(-\check{x}_{1}) e_q(-\check{x}_{0}) e_q(-\check{x}_{1})^{-1} \cdots e_q(-\check{x}_{N-2})^{-1} \CR
& \qquad = e_q(-\check{x}_0) e_q(-\check{x}_0\check{x}_1) \cdots  e_q(-\check{x}_0 \cdots \check{x}_{N-2}).
\end{align}
\end{lem}
\begin{proof}
We show \eqref{adjoint-reduction2} by induction. When $N=3$, 
the pentagon identity \eqref{pentagon2} with $i=0$ implies $e_q(-\check{x}_{1}) e_q(-\check{x}_{0}) e_q(-\check{x}_{1})^{-1} 
= e_q(-\check{x}_0) e_q(-\check{x}_0\check{x}_1)$.
Now suppose \eqref{adjoint-reduction2} is true for $N=k$. We note that $\check{x}_{k-1}$ commutes with $\check{x}_0, \check{x}_1, \ldots,\check{x}_{k-3}$
and $\check{x}_{k-1} \check{x}_{k-2} = q \check{x}_{k-2} \check{x}_{k-1}$. Hence, 
\begin{align*}
& e_q(-\check{x}_{k-1}) \cdots e_q(-\check{x}_{1}) e_q(-\check{x}_{0}) e_q(-\check{x}_{1})^{-1} \cdots e_q(-\check{x}_{k-1})^{-1} \\
&= e_q(-\check{x}_0) e_q(-\check{x}_0\check{x}_1) \cdots  e_q(-\check{x}_0 \cdots \check{x}_{k-3}) e_q(-\check{x}_{k-1}) 
e_q(-\check{x}_0 \cdots \check{x}_{k-2})e_q(-\check{x}_{k-1})^{-1}  \\
&= e_q(-\check{x}_0) e_q(-\check{x}_0\check{x}_1) \cdots  e_q(-\check{x}_0 \cdots \check{x}_{k-3})
e_q(-\check{x}_0 \cdots \check{x}_{k-2})e_q(-\check{x}_0 \cdots \check{x}_{k-1}),
\end{align*}
where for the first equality we have used the assumption of induction. 
We see that \eqref{adjoint-reduction2} is also true for $N=k+1$.
Similarly we can check \eqref{adjoint-reduction1} for $\hat{x}_i$. 
For example, the pentagon identity \eqref{pentagon1} with $i=0$ implies
$e_q(-\check{x}_{1})^{-1} e_q(-\check{x}_{0}) e_q(-\check{x}_{1})
=  e_q(-\check{x}_1\check{x}_0)e_q(-\check{x}_0)$. 
Note that compared with $\check{x}_i$,
the cyclic ordering of $q$-commuting variables is reversed.
\end{proof}

By Lemma \ref{lem:adjoint-reduction} we can reduce the left block $\mathcal{A}_L^{\mathrm{s}}$ 
and the right block $\mathcal{A}_R^{\mathrm{s}}$ of the non-stationary $\glN$ Hamiltonian \eqref{eq:glN Hamiltonian}
into the factorized form of higher root type;
\begin{prp}\label{A_LR-reduction}
We can recast the factorized Hamiltonian of simple root type to the form of higher root type;
\begin{align*}
\mathcal{A}_L^{(\mathrm{s})} &= \mathcal{A}_L^{(\mathrm{h})} \\
&= e_q(-\check{x}_0) e_q(-\check{x}_0\check{x}_1) \cdots  e_q(-\check{x}_0 \cdots \check{x}_{N-2}) \cdot
e_q(-\check{x}_{N-1}) \cdots   e_q(-\check{x}_{1}) \cdot \varphi(\Lambda), \\
\mathcal{A}_R^{(\mathrm{s})} &= \mathcal{A}_R^{(\mathrm{h})} \\
&= \varphi(q^{1-N} D_N \Lambda) \cdot
e_q(-\hat{x}_1) \cdots e_q(-\hat{x}_{N-1}) \cdot e_q(-\hat{x}_{N-2} \cdots \hat{x}_0) \cdots  
e_q(-\hat{x}_1 \hat{x}_0) e_q(-\hat{x}_0).
\end{align*}
\end{prp}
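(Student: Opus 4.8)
The plan is to deduce both identities from Lemma \ref{lem:adjoint-reduction} (and its order-reversed twin) after rewriting the twisted factors of $\mathcal{A}_L^{(\mathrm{s})}$ and $\mathcal{A}_R^{(\mathrm{s})}$ in terms of $e_q(z) = \varphi(z)^{-1}$. For $N=2$ both $G_L(\check{x})$ and $G_R(\hat{x})$ are empty products and the two forms coincide on the nose, so one may assume $N \geq 3$ from the outset.

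For the left block I would first substitute $\varphi(-\check{x}_i)^{-1} = e_q(-\check{x}_i)$ and $G_L(\check{x}) = \varphi(-\check{x}_1)\cdots\varphi(-\check{x}_{N-2})$, so that the twisted head of $\mathcal{A}_L^{(\mathrm{s})}$ becomes
\[
\frac{1}{G_L(\check{x})}\,\frac{1}{\varphi(-\check{x}_0)}\,G_L(\check{x}) = e_q(-\check{x}_{N-2})\cdots e_q(-\check{x}_1)\, e_q(-\check{x}_0)\, e_q(-\check{x}_1)^{-1}\cdots e_q(-\check{x}_{N-2})^{-1},
\]
which is exactly the left-hand side of \eqref{adjoint-reduction}. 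Lemma \ref{lem:adjoint-reduction} rewrites it as $e_q(-\check{x}_0)e_q(-\check{x}_0\check{x}_1)\cdots e_q(-\check{x}_0\cdots\check{x}_{N-2})$, while the remaining tail $\varphi(-\check{x}_{N-1})^{-1}\cdots\varphi(-\check{x}_1)^{-1}\varphi(\Lambda)$ is already $e_q(-\check{x}_{N-1})\cdots e_q(-\check{x}_1)\varphi(\Lambda)$; concatenating the two pieces yields $\mathcal{A}_L^{(\mathrm{s})} = \mathcal{A}_L^{(\mathrm{h})}$.

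For the right block the input I need is the order-reversed analogue of Lemma \ref{lem:adjoint-reduction}. With $G_R(\hat{x}) = \varphi(-\hat{x}_{N-2})\cdots\varphi(-\hat{x}_1)$ and the relation $\hat{x}_i\hat{x}_{i+1} = q\,\hat{x}_{i+1}\hat{x}_i$ (the sign in the $q$-exponent being opposite to that for the $\check{x}_i$ because of the reflection in the Cartan matrix), one has
\begin{align*}
G_R(\hat{x})\,\frac{1}{\varphi(-\hat{x}_0)}\,\frac{1}{G_R(\hat{x})}
&= e_q(-\hat{x}_{N-2})^{-1}\cdots e_q(-\hat{x}_1)^{-1}\, e_q(-\hat{x}_0)\, e_q(-\hat{x}_1)\cdots e_q(-\hat{x}_{N-2}) \\
&= e_q(-\hat{x}_{N-2}\cdots\hat{x}_0)\cdots e_q(-\hat{x}_1\hat{x}_0)\,e_q(-\hat{x}_0).
\end{align*}
The second equality is obtained either by rerunning the induction of Lemma \ref{lem:adjoint-reduction} verbatim, now with the pentagon identity \eqref{pentagon2} applied to each adjacent pair $(\hat{x}_i,\hat{x}_{i+1})$, or by transporting Lemma \ref{lem:adjoint-reduction} through the order-reversing anti-automorphism of the algebra \eqref{q-com-1}--\eqref{q-com-2} that exchanges $\hat{x}_i$ and $\check{x}_i$. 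Prepending the untouched prefactor $\varphi(q^{1-N}D_N\Lambda)\,e_q(-\hat{x}_1)\cdots e_q(-\hat{x}_{N-1})$ of $\mathcal{A}_R^{(\mathrm{s})}$ then gives $\mathcal{A}_R^{(\mathrm{s})} = \mathcal{A}_R^{(\mathrm{h})}$.

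The argument has no essential difficulty, since its combinatorial core is already Lemma \ref{lem:adjoint-reduction}; what needs care is the bookkeeping of the $q$-commutation relations — in particular that the pentagon identity, valid for $ab = qba$, is being invoked for genuinely adjacent pairs and in the correct orientation — together with the observation that the head factor of $\mathcal{A}_R^{(\mathrm{s})}$ is conjugated in the mirrored pattern, so that the reversed form of the lemma, rather than the lemma itself, is what applies there.
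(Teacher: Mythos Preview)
Your proposal is correct and follows exactly the route the paper intends: the paper states Proposition \ref{A_LR-reduction} as an immediate consequence of Lemma \ref{lem:adjoint-reduction}, and you have spelled out precisely how the lemma applies to the twisted head of $\mathcal{A}_L^{(\mathrm{s})}$ and how its order-reversed analogue (obtained either by rerunning the induction with \eqref{pentagon2} or by the anti-automorphism $\hat{x}_i \leftrightarrow \check{x}_i$) handles the conjugated tail of $\mathcal{A}_R^{(\mathrm{s})}$. The bookkeeping of the $q$-commutation orientations is correct throughout.
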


Let us introduce the cyclic shift $\pi(\check{x}_i)=\check{x}_{i+1}~(i \in \mathbb{Z}/ N\mathbb{Z})$ and similarly for $\hat{x}_i$.
$\pi$ is an automorphism of the algebra. Using the pentagon identity \eqref{pentagon}, we can show the 
Hamiltonian enjoys the cyclic symmetry $\pi(\mathcal{H}^{\glN}) = \mathcal{H}^{\glN}$.
It is enough to prove $\pi(\mathcal{A}_L) = \mathcal{A}_L$ and $\pi(\mathcal{A}_R) = \mathcal{A}_R$, since other parts of 
$\mathcal{H}^{\glN}$ are manifestly symmetric under the cyclic permutation. 

\begin{prp}\label{Prop:cyclic-sym}
We have
$$
\pi(\mathcal{A}_L) = \mathcal{A}_L, \qquad \pi(\mathcal{A}_R) = \mathcal{A}_R,
$$
and the Hamiltonian of the non-stationary $\glN$ difference equation has the cyclic symmetry in the variables $x_i$. 
\end{prp}

\begin{proof}
Recall that Lemma \ref{lem:adjoint-reduction} is derived by applying the pentagon identity $N-2$ times 
for the $q$-commuting pairs of variables $(\check{x}_0, \check{x}_1), (\check{x}_0\check{x}_1, \check{x}_2),
\ldots , (\check{x}_0\check{x}_1\cdots\check{x}_{N-3}, \check{x}_{N-2})$. 
Our strategy is to apply the pentagon identity $N-2$ times for the $q$-commuting pairs
$(\check{x}_1, \check{x}_2), (\check{x}_1\check{x}_2, \check{x}_3),
\ldots , (\check{x}_1\check{x}_2\cdots\check{x}_{N-2}, \check{x}_{N-1})$.
After the first step of applying the pentagon identity for $e_q(-\check{x}_2) e_q(-\check{x}_1)$, we have
\begin{align*}
\mathcal{A}_L &= e_q(-\check{x}_1) e_q(-\check{x}_{N-2}) \cdots e_q(-\check{x}_{3}) e_q(-\check{x}_{1}\check{x}_{2})
e_q(-\check{x}_0) e_q(-\check{x}_{1}\check{x}_{2})^{-1} \\
& \qquad \times e_q(-\check{x}_{3})^{-1} \cdots  e_q(-\check{x}_{N-2})^{-1} e_q(-\check{x}_{N-1}) \cdots e_q(-\check{x}_{3})
e_q(-\check{x}_{1}\check{x}_{2})e_q(-\check{x}_{2}).
\end{align*}
Then after the second step of applying the pentagon identity for $e_q(-\check{x}_3) e_q(-\check{x}_1\check{x}_2)$, we have
\begin{align*}
\mathcal{A}_L &= e_q(-\check{x}_1)e_q(-\check{x}_{1}\check{x}_{2}) e_q(-\check{x}_{N-2}) \cdots e_q(-\check{x}_{4})
e_q(-\check{x}_{1}\check{x}_{2}\check{x}_{3})
e_q(-\check{x}_0)\\
& \qquad \times e_q(-\check{x}_{4})^{-1} \cdots  e_q(-\check{x}_{N-2})^{-1} e_q(-\check{x}_{N-1}) \cdots e_q(-\check{x}_{4})
e_q(-\check{x}_{1}\check{x}_{2}\check{x}_{3})
e_q(-\check{x}_{3})e_q(-\check{x}_{2}).
\end{align*}
We repeatedly apply the pentagon identity in a similar manner. After the $N-2$ steps, we arrive at
\begin{align*}
\mathcal{A}_L &= e_q(-\check{x}_1)e_q(-\check{x}_{1}\check{x}_{2}) \cdots e_q(-\check{x}_{1}\cdots \check{x}_{N-2}) e_q(-\check{x}_0)\\
& \qquad \times e_q(-\check{x}_{1}\check{x}_{2}\cdots \check{x}_{N-1}) e_q(-\check{x}_{N-1}) \cdots e_q(-\check{x}_2). 
\end{align*}
Since $\check{x}_0$ and $\check{x}_1\check{x}_2\cdots\check{x}_{N-1}$ are commuting, this completes the proof of the first identity.
The second identity can be proved in a similar way. 
\end{proof}



\newcommand{\Dshift}{\pi}
\newcommand{\Dreflection}[1]{s_{#1}}
\newcommand{\RA}[2]{ \mathcal{A}^{#1}_{#2}}
\newcommand{\tRA}[2]{\mathcal{A}^{#1}_{#2}}
\newcommand{\RB}[1]{\mathcal{B}_{#1}}
\newcommand{\tRB}[1]{\mathcal{B}^{#1}}


We have proved that the $\glN$ Hamiltonian  is invariant under the shift $\Dshift(\hat x_i) := \hat x_{i+1}$.
By the pentagon identity we can also check the invariance under the automorphisms of the Dynkin diagram of $A_{N-1}^{(1)}$ for $N \geq 3$.
Let $\Dshift$ and $\Dreflection{j}$ be 
the automorphism $\Dshift(ab)=\Dshift(a)\Dshift(b)$ and 
the anti-automorphisms $\Dreflection{j}(ab)=\Dreflection{j}(b)\Dreflection{j}(a)$, respectively,  
such that
\be
\Dshift(\hat x_i) := \hat x_{i+1},\qquad 
\Dreflection{j}(\hat x_i) := \hat x_{2j-i}.
\ee
Since $\Dreflection{\frac n2} = \Dshift^{n}\circ\Dreflection{0}$ for any $n\in\bZ$,
the group generated by $\Dshift$ and $\Dreflection{\frac n2}$'s $(n\in\bZ)$
is generated by $\Dshift$ and $\Dreflection{0}$, i.e., 
$\langle \Dshift, \Dreflection{\frac n2} \rangle_{n\in\bZ}=\langle \Dshift, \Dreflection{0} \rangle$.
Note that the automorphism of the Dynkin diagram of $A^{(1)}_{N-1}$, 
which is isomorphic to the dihedral group, is generated by $\Dshift$ and $\Dreflection{0}$
with
\be
\Dshift^N = \Dreflection{0}^2={\rm id},
\qquad
\Dreflection{0}=\Dshift\circ\Dreflection{0}\circ\Dshift.
\ee


For $i\in\bZ/N\bZ$, let us look at the following quantities
\begin{align}
{\RA{i-1}{i}}
:=
e_q(-\hat{x}_{i+1}) \cdots e_q(-\hat{x}_{i+N-2})  
\cdot
e_q(-\hat{x}_{i+N-1})
&\cdot
e_q(-\hat{x}_{i+N-2})^{-1}\cdots e_q(-\hat{x}_{i+1})^{-1}
\cr
\cdot\ 
e_q(-\hat{x}_{i}) 
&\cdot
e_q(-\hat{x}_{i+1}) \cdots e_q(-\hat{x}_{i+N-2})
\end{align}
and
\begin{align}
{\RB {i}}
:=&
e_q(-\hat{x}_{i+1}) \cdots e_q(-\hat{x}_{i+N-2})  e_q(-\hat{x}_{i+N-1})
\cr
\cdot\, &
e_q(-\hat{x}_{i+N-2} \cdots \hat{x}_{i+1}\hat{x}_{i}) \cdots e_q(-\hat{x}_{i+1}\hat{x}_{i})  e_q(-\hat{x}_{i}),
\cr
{\tRB {i}}
:=&
e_q(-\hat{x}_{i+N})  
e_q(-\hat{x}_{i+N}\hat{x}_{i+N-1}) \cdots 
e_q(-\hat{x}_{i+N} \hat{x}_{i+N-1}\cdots \hat{x}_{i+2})
\cr
\cdot\, &
e_q(-\hat{x}_{i+1}) e_q(-\hat{x}_{i+2}) \cdots e_q(-\hat{x}_{i+N-1}).
\end{align}
Then
\begin{align}
\Dshift({\RA{i-1}{i}})
&=
{\tRA{i}{i+1}},
\qquad
\Dreflection{j}({\RA{i-1}{i}})
=
{\tRA{2j-i}{2j-i+1}},
\cr
\Dshift({\RB {i}})
&=
{\RB {i+1}},
\qquad\quad
\Dreflection{j}({\RB {i}})
=
{\tRB {2j-i}},
\cr
\Dshift({\tRB {i}})
&=
{\tRB {i+1}},
\qquad\quad
\Dreflection{j}({\tRB {i}})
=
{\RB {2j-i}}.
\end{align}
Therefore, 
$\{ {\RA{i-1}{i}} \}_{i\in\bZ/N\bZ}$ and
$\{ {\RB{i}}, {\tRB{i}}  \}_{i\in\bZ/N\bZ}$ are 
invariant and transitive under the Dynkin automorphism group
$\langle \Dshift, \Dreflection{0} \rangle$.

By iteratively using the pentagon identity \eqref{pentagon2}
we have (See the proof of Lemma \ref{lem:adjoint-reduction})
\begin{lem}
For any $N \geq 3$ and $1\leq j-i\leq N-2$,
\begin{align}
e_q(-\hat{x}_i)\,\cdot\,
&
e_q(-\hat{x}_{i+1}) \cdots e_q(-\hat{x}_j)   \cr
=\,
&
e_q(-\hat{x}_{i+1}) \cdots e_q(-\hat{x}_{j}) 
\cdot
e_q(-\hat{x}_j \cdots \hat{x}_{i+1}\hat{x}_i) \cdots e_q(-\hat{x}_{i+1}\hat{x}_i)  e_q(-\hat{x}_i)
\label{eq:Lem1}
\end{align}
and
\begin{align}
e_q(-\hat{x}_i) \cdots e_q(-\hat{x}_{j-1}) 
&\cdot  e_q(-\hat{x}_j)   \cr
=
e_q(-\hat{x}_j)  e_q(-\hat{x}_j\hat{x}_{j-1}) \cdots e_q(-\hat{x}_j \hat{x}_{j-1}\cdots \hat{x}_i)
\cdot
e_q(-\hat{x}_i) \cdots e_q(-\hat{x}_{j-1})
&.
\label{eq:Lem2}
\end{align}
\end{lem}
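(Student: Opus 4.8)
The plan is to establish \eqref{eq:Lem1} by induction on $j-i$, exactly as in the proof of Lemma~\ref{lem:adjoint-reduction}, and then to deduce \eqref{eq:Lem2} from \eqref{eq:Lem1} by applying the Dynkin anti-automorphism $\Dreflection{(i+j)/2}$.

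For \eqref{eq:Lem1}, the case $j-i=1$ is literally the pentagon identity \eqref{pentagon2}. For the inductive step, assume \eqref{eq:Lem1} holds with $j$ replaced by $j-1$; this is legitimate when $2\le j-i\le N-2$. Applying the hypothesis to the leftmost $j-i$ factors of the left-hand side of \eqref{eq:Lem1} and then right-multiplying by $e_q(-\hat{x}_j)$ gives
\begin{align*}
&e_q(-\hat{x}_i)\,e_q(-\hat{x}_{i+1})\cdots e_q(-\hat{x}_j)\\
&\quad= e_q(-\hat{x}_{i+1})\cdots e_q(-\hat{x}_{j-1})\cdot e_q(-\hat{x}_{j-1}\cdots\hat{x}_i)\cdots e_q(-\hat{x}_{i+1}\hat{x}_i)\,e_q(-\hat{x}_i)\,e_q(-\hat{x}_j).
\end{align*}
Now I move $e_q(-\hat{x}_j)$ leftward through the higher-root factors just produced. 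Reading \eqref{q-com-1} off the $A_{N-1}$ Cartan matrix, and using $j-i\le N-2$ to exclude the affine wrap-around $x_0=x_N$, the only one of $\hat{x}_i,\hat{x}_{i+1},\ldots,\hat{x}_{j-1}$ that does not commute with $\hat{x}_j$ is $\hat{x}_{j-1}$. Hence $e_q(-\hat{x}_j)$ slides freely past $e_q(-\hat{x}_i)$ and past $e_q(-\hat{x}_{i+1}\hat{x}_i),\ldots,e_q(-\hat{x}_{j-2}\cdots\hat{x}_i)$ until it meets $e_q(-\hat{x}_{j-1}\hat{x}_{j-2}\cdots\hat{x}_i)$; a one-line computation using only $\hat{x}_{j-1}\hat{x}_j=q\,\hat{x}_j\hat{x}_{j-1}$ and the commutations above gives $(\hat{x}_{j-1}\hat{x}_{j-2}\cdots\hat{x}_i)\,\hat{x}_j=q\,\hat{x}_j\,(\hat{x}_{j-1}\hat{x}_{j-2}\cdots\hat{x}_i)$, so the pentagon identity \eqref{pentagon} applies with $a=\hat{x}_{j-1}\cdots\hat{x}_i$, $b=\hat{x}_j$ and produces $e_q(-\hat{x}_j)\,e_q(-\hat{x}_j\hat{x}_{j-1}\cdots\hat{x}_i)\,e_q(-\hat{x}_{j-1}\cdots\hat{x}_i)$. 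The leading $e_q(-\hat{x}_j)$ then combines with $e_q(-\hat{x}_{i+1})\cdots e_q(-\hat{x}_{j-1})$ into $e_q(-\hat{x}_{i+1})\cdots e_q(-\hat{x}_j)$, and what remains is precisely the right-hand side of \eqref{eq:Lem1} with parameter $j$, closing the induction. Finally, \eqref{eq:Lem2} follows by applying $\Dreflection{(i+j)/2}$, which reverses products and sends $\hat{x}_k\mapsto\hat{x}_{i+j-k}$ while preserving the relations \eqref{q-com-1}: the two sides of \eqref{eq:Lem1} are carried precisely to the two sides of \eqref{eq:Lem2}. (Equivalently, \eqref{eq:Lem2} can be proved directly by the mirror induction, sliding $e_q(-\hat{x}_j)$ leftward through $e_q(-\hat{x}_i)\cdots e_q(-\hat{x}_{j-1})$.)

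The argument is conceptually routine; the only thing that needs care is the bookkeeping of which $\hat{x}_k$ commute and which $q$-commute — this is governed entirely by the $A_{N-1}$ Cartan matrix in \eqref{q-com-1} — together with the verification of the ``long'' $q$-commutation relation $(\hat{x}_{j-1}\cdots\hat{x}_i)\hat{x}_j=q\,\hat{x}_j(\hat{x}_{j-1}\cdots\hat{x}_i)$ that legitimizes each pentagon move. The bound $j-i\le N-2$ is precisely what prevents the cyclic identification $x_0=x_N$ from contributing spurious powers of $q$, so that the computation reduces to the non-affine $A$-type one.
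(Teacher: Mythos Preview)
Your proof is correct and follows the paper's approach: the paper's proof is a one-line pointer back to Lemma~\ref{lem:adjoint-reduction}, i.e.\ exactly the induction on $j-i$ using the pentagon identity \eqref{pentagon2} that you carry out in detail. Your derivation of \eqref{eq:Lem2} from \eqref{eq:Lem1} via the anti-automorphism $s_{(i+j)/2}$ is a clean shortcut (the paper implicitly treats both by the same direct iteration), and your remark that the bound $j-i\le N-2$ kills the cyclic wrap-around is the right justification for the commutations used.
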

Then, the relations \eqref{eq:Lem1} and \eqref{eq:Lem2} imply the following result,
which generalizes Proposition \ref{Prop:cyclic-sym};
\begin{lem}
For any integer $N \geq 3$, 
${\RA{i-1}{i}}={\tRB {i-1}}={\RB {i+1}}={\tRA{i}{i+1}}$.
\end{lem}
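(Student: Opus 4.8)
The plan is to establish the chain of four equalities
${\RA{i-1}{i}}={\tRB {i-1}}={\RB {i+1}}={\tRA{i}{i+1}}$
by applying the two preceding lemmas. First I would prove the middle equality ${\tRB{i-1}}={\RB{i+1}}$, which is purely a matter of rewriting $q$-exponential products. Recall that ${\tRB{i-1}}$ reads
$e_q(-\hat{x}_{i+N-1}) e_q(-\hat{x}_{i+N-1}\hat{x}_{i+N-2}) \cdots e_q(-\hat{x}_{i+N-1}\cdots\hat{x}_{i+1}) \cdot e_q(-\hat{x}_i)e_q(-\hat{x}_{i+1})\cdots e_q(-\hat{x}_{i+N-2})$ while ${\RB{i+1}}$ reads
$e_q(-\hat{x}_{i+2})\cdots e_q(-\hat{x}_{i+N-1}) \cdot e_q(-\hat{x}_{i+N-1}\cdots\hat{x}_{i+1}) \cdots e_q(-\hat{x}_{i+2}\hat{x}_{i+1}) e_q(-\hat{x}_{i+1})$. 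Both are reorderings of the same underlying set of $q$-exponential factors indexed by the simple roots $\hat x_{i+1},\ldots,\hat x_{i+N-1}$ together with the higher-root chains $\hat x_{i+N-1}\cdots\hat x_{i+k}$; I would show directly that applying \eqref{eq:Lem2} repeatedly (moving each simple-root factor $e_q(-\hat x_{i+k})$ past the appropriate higher-root factors) transforms one into the other. Equivalently, since $\hat x_{i+1}$ and the product $\hat x_{i+1}\cdots \hat x_{i+N-1}$ are central, the computation at the very end of the proof of Proposition \ref{Prop:cyclic-sym} already gives this form, so the middle equality is essentially that proposition rewritten for the shifted indices.

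Next I would use the two new lemmas to connect the outer terms to this middle block. The equality ${\RA{i-1}{i}}={\tRB{i-1}}$ is obtained by starting from the definition of ${\RA{i-1}{i}}$ and first applying \eqref{eq:Lem1} to the conjugated cluster
$e_q(-\hat{x}_{i+1})\cdots e_q(-\hat{x}_{i+N-2})\, e_q(-\hat{x}_{i+N-1})\, e_q(-\hat{x}_{i+N-2})^{-1}\cdots e_q(-\hat{x}_{i+1})^{-1}$, which by the same inductive argument used in Lemma \ref{lem:adjoint-reduction} (the reduction of a conjugation to a product of higher-root $q$-exponentials) collapses to $e_q(-\hat{x}_{i+N-1})e_q(-\hat{x}_{i+N-1}\hat{x}_{i+N-2})\cdots e_q(-\hat{x}_{i+N-1}\cdots\hat{x}_{i+1})$; combined with the trailing factors $e_q(-\hat x_i)e_q(-\hat x_{i+1})\cdots e_q(-\hat x_{i+N-2})$ this is exactly ${\tRB{i-1}}$. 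Symmetrically, ${\RB{i+1}}={\tRA{i}{i+1}}$ follows by running the same manipulation in the opposite direction, using \eqref{eq:Lem2} in place of \eqref{eq:Lem1}; alternatively, since we have already shown $\{\RB{i},\tRB{i}\}$ and $\{\RA{i-1}{i}\}$ are each transitive under the Dynkin automorphisms and the reflection $\Dreflection{j}$ interchanges $\RB{\bullet}$ with $\tRB{\bullet}$ and sends $\RA{\bullet}{\bullet}$ to another $\RA{\bullet}{\bullet}$, I can deduce ${\RB{i+1}}={\tRA{i}{i+1}}$ from ${\RA{i-1}{i}}={\tRB{i-1}}$ by applying a suitable $\Dreflection{j}$, which is an anti-automorphism and thus also reverses products.

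The main obstacle will be bookkeeping: keeping track of the precise order of the noncommuting factors and of the indices modulo $N$ while applying \eqref{eq:Lem1} and \eqref{eq:Lem2} iteratively, and verifying at each stage that the relevant pairs of variables $q$-commute (which holds because $\hat x_k$ and $\hat x_\ell$ $q$-commute only for $|k-\ell|=1$ mod $N$ and the higher-root products inherit the right commutation with the next simple-root variable). Once the index arithmetic is set up carefully, each individual step is a single application of an already-proven lemma, so there is no genuinely new analytic input; the content is entirely combinatorial reorganization of the $q$-exponential product, exactly parallel to the proof of Proposition \ref{Prop:cyclic-sym}.
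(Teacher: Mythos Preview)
Your plan for the two outer equalities is essentially the paper's own. Indeed, ${\RA{i-1}{i}}={\tRB{i-1}}$ is a single application of \eqref{eq:Lem2} with $(i,j)=(i+1,i+N-1)$ (not \eqref{eq:Lem1} as you wrote, though the mechanism is the same), and the paper then obtains ${\tRA{i}{i+1}}={\RB{i+1}}$ by applying the anti-automorphism $\Dreflection{i}$, exactly as in your alternative suggestion.

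The gap is in the middle equality ${\tRB{i-1}}={\RB{i+1}}$. Your claim that the two sides are ``reorderings of the same underlying set of $q$-exponential factors'' is not correct: the higher-root factors in $\tRB{i-1}$ are $e_q(-\hat x_{i+N-1}\hat x_{i+N-2}\cdots\hat x_{i+k})$ (chains anchored at the top index $i+N-1$), whereas those in $\RB{i+1}$ are $e_q(-\hat x_{i+k}\cdots\hat x_{i+2}\hat x_{i+1})$ (chains anchored at the bottom index $i+1$). For $N\geq 4$ these are genuinely different monomials, so no amount of commuting existing factors will convert one product into the other; pentagon moves must create and destroy higher-root factors. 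Your appeal to Proposition \ref{Prop:cyclic-sym} does not close this gap either: that proposition establishes $\pi$-invariance of a single form (so, e.g., $\RB{j}=\RB{j+1}$), not the passage between $\tRB{\bullet}$ and $\RB{\bullet}$. Also, neither $\hat x_{i+1}$ nor $\hat x_{i+1}\cdots\hat x_{i+N-1}$ is central; in Proposition \ref{Prop:cyclic-sym} the point is only that $\check x_0$ commutes with the single element $\check x_1\cdots\check x_{N-1}$.

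What the paper actually does for the middle step is apply \eqref{eq:Lem1} with $(i,j)=(i+1,i+N-2)$ to $\tRB{i-1}$, producing a symmetric-looking intermediate expression; applying $\Dreflection{i}$ to that same computation yields an analogous expression for $\RB{i+1}$; and the two intermediate expressions are then seen to coincide because the three blocks $e_q(-\hat x_{i+2})\cdots e_q(-\hat x_{i+N-2})$, $e_q(-\hat x_{i+N})$ and $e_q(-\hat x_{i+N-1}\cdots\hat x_{i+1})$ pairwise commute. This last commutation check is the one piece of content your proposal is missing.
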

\begin{proof}
By using 
$(\ref{eq:Lem2})$ with $(i,j)=(i+1,i+N-1)$, 
${\RA{i-1}{i}}={\tRB {i-1}}$. 
By using $(\ref{eq:Lem1})$ with $(i,j)=(i+1,i+N-2)$, 
\begin{align}
{\tRB {i-1}}
=
e_q(-\hat{x}_{i+N-1})  e_q(-\hat{x}_{i+N-1}\hat{x}_{i+N-2})
& \cdots 
e_q(-\hat{x}_{i+N-1} \hat{x}_{i+N-2}\cdots \hat{x}_{i+2})
\cr
\cdot\ 
e_q(-\hat{x}_{i+N-1} \hat{x}_{i+N-2}\cdots \hat{x}_{i+1})
&\cdot
e_q(-\hat{x}_{i})  
\cdot
e_q(-\hat{x}_{i+2}) \cdots e_q(-\hat{x}_{i+N-2})
\cr
\cdot\ 
e_q(-\hat{x}_{i+N-2} \cdots \hat{x}_{i+2}\hat{x}_{i+1}) 
&\cdots e_q(-\hat{x}_{i+2}\hat{x}_{i+1})  e_q(-\hat{x}_{i+1}). 
\label{eq:A}
\end{align}
Acting the anti-automorphism $\Dreflection{i}$ on the above equations, 
we have
${\tRA{i}{i+1}}={\RB {i+1}}$ and
\begin{align}
{\RB {i+1}}
=
e_q(-\hat{x}_{i+N-1})  e_q(-\hat{x}_{i+N-1}\hat{x}_{i+N-2}) 
&\cdots 
e_q(-\hat{x}_{i+N-1} \hat{x}_{i+N-2}\cdots \hat{x}_{i+2})
\cr
\cdot\ 
e_q(-\hat{x}_{i+2}) \cdots e_q(-\hat{x}_{i+N-2})
&\cdot 
e_q(-\hat{x}_{i+N}) 
\cdot
e_q(-\hat{x}_{i+N-1} \cdots \hat{x}_{i+2}\hat{x}_{i+1}) 
\cr
\cdot\ 
e_q(-\hat{x}_{i+N-2} \cdots \hat{x}_{i+2}\hat{x}_{i+1}) 
&\cdots 
e_q(-\hat{x}_{i+2}\hat{x}_{i+1})  e_q(-\hat{x}_{i+1}). 
\end{align}
Since
$e_q(-\hat{x}_{i+2}) \cdots e_q(-\hat{x}_{i+N-2})$, 
$e_q(-\hat{x}_{i+N})$ 
and 
$e_q(-\hat{x}_{i+N-1} \cdots \hat{x}_{i+2}\hat{x}_{i+1})$ 
commute each other,%
\footnote{
Note that
$[\, \hat{x}_{k}\,  ,\ \hat{x}_{\ell} \, ]=0$ for $k  \neq \ell \pm 1$ (mod $N$) and
$[\, \hat{x}_{i+N-1} \cdots \hat{x}_{i+2}\hat{x}_{i+1} \, , \, \hat{x}_{i+j} \, ]=0$
for
$j\neq \pm 1$ (mod $N$).
 }
~we obtain the Lemma.
\end{proof}

By this lemma, 
${\RB {i+1}}={\tRA{i}{i+1}}={\RA{j}{j+1}}={\tRB {j}}$
for any $i,j\in\bZ/N\bZ$.
Thus, 
${\RA{i-1}{i}}={\RB {j}}={\tRB {k}}$
for any $i,j,k\in\bZ/N\bZ$.
Therefore, we finally obtain
\begin{prp}\label{Dynkin-inv}
For any integer $N \geq 3$, and for any $i\in\bZ/N\bZ$,
${\RA{i-1}{i}}={\RB {i}}={\tRB {i}}$.
Hence, ${\RB {i}}={\tRB {i}}$ is invariant under the Dynkin automorphism group
$\langle \Dshift, \Dreflection{0} \rangle$,
i.e.
$ \Dshift({\RB {i}}) = \Dreflection{0}({\RB {i}}) = {\RB {i}}$.
\end{prp}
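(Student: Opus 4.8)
The plan is to establish Proposition~\ref{Dynkin-inv} as an immediate corollary of the two preceding lemmas, so the real work is to organize the bookkeeping of how the finite set of operators transforms under $\langle\Dshift,\Dreflection{0}\rangle$. First I would record that the set $\{{\RA{i-1}{i}}\}_{i\in\bZ/N\bZ}$ together with $\{{\RB{i}},{\tRB{i}}\}_{i\in\bZ/N\bZ}$ is preserved (as a set) by both $\Dshift$ and every $\Dreflection{j}$, using the displayed transformation rules $\Dshift({\RA{i-1}{i}})={\tRA{i}{i+1}}$, $\Dreflection{j}({\RA{i-1}{i}})={\tRA{2j-i}{2j-i+1}}$, and similarly for $\RB{\cdot}$ and $\tRB{\cdot}$; this only uses that $\Dshift$ is an algebra automorphism and $\Dreflection{j}$ an anti-automorphism acting on the indices by $i\mapsto i+1$ and $i\mapsto 2j-i$ respectively, which is a direct substitution into the defining formulas.

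Next I would invoke the second lemma, which already proves the chain of equalities ${\RA{i-1}{i}}={\tRB{i-1}}={\RB{i+1}}={\tRA{i}{i+1}}$ by iterated application of the pentagon identity \eqref{pentagon2} through \eqref{eq:Lem1} and \eqref{eq:Lem2}, followed by the observation that $e_q(-\hat{x}_{i+2})\cdots e_q(-\hat{x}_{i+N-2})$, $e_q(-\hat{x}_{i+N})$ and $e_q(-\hat{x}_{i+N-1}\cdots\hat{x}_{i+1})$ mutually commute. From that chain, running over all $i\in\bZ/N\bZ$, one gets ${\RB{i+1}}={\tRA{i}{i+1}}={\RA{j}{j+1}}={\tRB{j}}$ for all $i,j$, hence ${\RA{i-1}{i}}={\RB{j}}={\tRB{k}}$ for all $i,j,k$. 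In particular all three families collapse to a single operator, call it $\mathcal{I}$, independent of the index.

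Finally, to conclude $\Dshift$- and $\Dreflection{0}$-invariance of $\mathcal{I}$: since $\Dshift$ maps ${\RA{i-1}{i}}$ to ${\tRA{i}{i+1}}$ and we have just shown every ${\RA{\cdot}{\cdot}}$ and every ${\tRA{\cdot}{\cdot}}$ equals $\mathcal{I}$, we get $\Dshift(\mathcal{I})=\mathcal{I}$; likewise $\Dreflection{0}({\RA{i-1}{i}})={\tRA{-i}{-i+1}}=\mathcal{I}$, so $\Dreflection{0}(\mathcal{I})=\mathcal{I}$. Because $\langle\Dshift,\Dreflection{0}\rangle$ is exactly the Dynkin automorphism group of $A^{(1)}_{N-1}$ (dihedral, with $\Dshift^N=\Dreflection{0}^2=\mathrm{id}$ and $\Dreflection{0}=\Dshift\circ\Dreflection{0}\circ\Dshift$ as noted), invariance under the two generators gives invariance under the whole group. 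Since $\mathcal{A}_R^{(\mathrm{s})}=\mathcal{A}_R^{(\mathrm{h})}$ equals $\varphi(q^{1-N}D_N\Lambda)\cdot\mathcal{I}$ (with the commutative prefactor manifestly Dynkin-invariant) by Proposition~\ref{A_LR-reduction}, and $\mathcal{A}_L$ is handled symmetrically via the analogous identities with $\check{x}_i$, the full Hamiltonian $\mathcal{H}^{\glN}$ is Dynkin-invariant.

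The main obstacle is not any single hard step but making sure the index arithmetic in $\bZ/N\bZ$ is consistent across the three families ${\RA{\cdot}{\cdot}}$, ${\RB{\cdot}}$, ${\tRB{\cdot}}$ under both $\Dshift$ and the anti-automorphisms $\Dreflection{j}$ — in particular checking that the commuting-triple argument at the end of the second lemma's proof genuinely applies for all $N\geq 3$ and that the pentagon moves in \eqref{eq:Lem1}--\eqref{eq:Lem2} are legitimate, i.e. that the relevant pairs of arguments are $q$-commuting with the correct power of $q$. Once the transformation table and the lemma's collapse are in hand, the proposition itself is essentially a one-line deduction.
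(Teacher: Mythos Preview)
Your proposal is correct and follows essentially the same route as the paper: the paper's argument is the short paragraph immediately preceding the proposition, which draws the chain ${\RA{i-1}{i}}={\tRB{i-1}}={\RB{i+1}}={\tRA{i}{i+1}}$ from the lemma, collapses all three families to a single element independent of the index, and then reads off Dynkin invariance from the previously displayed transformation rules under $\Dshift$ and $\Dreflection{j}$. Your write-up is a more explicit version of exactly this deduction, with the additional (correct) remark about $\mathcal{A}_R$ and the full Hamiltonian that the paper makes in the paragraph following the proposition.
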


%
\begin{cor}\label{Cor2.8}
The non-stationary Hamiltonian enjoys the the full invariance under the Dynkin automorphism of $A_{N-1}^{(1)}$. 
\end{cor}
The original definition of the right block $\mathcal{A}_R$ of $\glN$ Hamiltonian 
employs $\mathcal{A}_0^{N-1}$ in Proposition \ref{Dynkin-inv} (see Definition \ref{def:glN Hamiltonian}). 
On the other hand it is
\begin{equation}\label{another_expression_of_A_R}
\tRB {0}=
e_q(-\hat{x}_0)e_q(-\hat{x}_0\hat{x}_{N-1})\cdots
 e_q(-\hat{x}_0\hat{x}_{N-1}\cdots\hat{x}_2)
\cdot e_q(-\hat{x}_1)\cdots e_q(-\hat{x}_{N-1})
\end{equation}
that naturally appears in the Hamiltonian 
constructed from the universal $R$ matrix of $U_q (A_{N-1}^{(1)})$ \cite{AHKOSSY}.
We have focused on the right block $\mathcal{A}_R$ of the non-stationary Hamiltonian. 
Similarly we can confirm the invariance under the Dynkin automorphism of the left block 
$\mathcal{A}_L$ with $q$-commutative variables $\check{x}_i$. 
The dihedral group invariance of the remaining parts of the Hamiltonian 
is trivial. Hence, we obtain Corollary \ref{Cor2.8}. 


\subsection{Normal ordered form of the Hamiltonian}
\label{subsec:normal}

In this subsection we prove the equivalence of the Hamiltonian of factorized form and of normal ordered form
(see Definitions \ref{factorized1}, \ref{factorized2} and \ref{def:normal-ordered});
\begin{prp}\label{F=N}
We can recast the factorized Hamiltonian of higher root type to the normal ordered form
$$
\mathcal{A}_R^{(\mathrm{h})} = \mathcal{A}_R^{(\mathrm{n})}, \qquad
\mathcal{A}_L^{(\mathrm{h})} = \mathcal{A}_L^{(\mathrm{n})}.
$$
\end{prp}
Recall that $\hat{x}_i = d_i \overline{d}_i x_i q^{\vartheta_i - \vartheta_{i-1}}$
and $\check{x}_i := x_i q^{-\vartheta_i + \vartheta_{i-1}}$.\footnote{
We have fixed the scaling parameters as $\alpha_i = d_i \overline{d}_i$ 
and $\beta_i =1$ to write down the building block of the Hamiltonian.}
We are going to show the agreement of the factorized form of
the building block $\mathcal{A}_R^{(\mathrm{h})}$ of higher root type 
and the corresponding normal ordered form $\mathcal{A}_R^{(\mathrm{n})}$. 
The agreement of $\mathcal{A}_L^{(\mathrm{h})}$ and $\mathcal{A}_L^{(\mathrm{n})}$ is proved similarly.
To prove Proposition \ref{F=N} for $\mathcal{A}_R$ 
we need Proposition \ref{Prop;2.9} below, which relies on the $q$-multinomial formula;
\begin{lem} 
If $x_i x_j = q x_j x_i$ for $i<j$, we have
\begin{equation}\label{multi-nomial}
(x_1 + x_2 + \cdots + x_m)^n = \sum_{\substack{k_1,\ldots,k_{m}\geq 0 \\ k_1+ k_2 + \cdots + k_m =n}}
\cb{n}{k_1, k_2, \cdots, k_m}{q} x_m^{k_m} \cdots  x_2^{k_2} x_1^{k_1},
\end{equation}
where 
$$
\cb{n}{k_1, k_2, \cdots, k_m}{q}:= \frac{[n]_q!}{[k_1]_q! [k_2]_q! \cdots [k_m]_q!}
= \frac{(q;q)_n}{(q;q)_{k_1} (q;q)_{k_2} \cdots (q;q)_{k_m}},
$$
denotes the $q$-multinomial coefficients. 
\end{lem}
When $m=2$, \eqref{multi-nomial} is given as Exercise 1.35 in \cite{GR} (see also references there).
The general case is proved by induction on $m$ as follows;
\begin{proof}
Assume that \eqref{multi-nomial} holds for some arbitrary $m\in\bZ_{\geq 2}$, then
\begin{align*}
\frac{\left(x_1+\cdots+
x_{m+1}\right)^n}{(q;q)_n} 
&=\frac{\left((x_1+\cdots+x_m)+x_{m+1}\right)^n}{(q;q)_n} 
\\
&= \sum_{ \Frac{k,\ell\geq 0}{k+\ell=n} }  
 \frac{x_{m+1}^{k}}{(q;q)_{k}} \frac{(x_1+\cdots+x_m)^{\ell}}{(q;q)_{\ell}}
\\
&=\sum_{ \Frac{k,\ell\geq 0}{k+\ell=n} }
 \sum_{ \Frac{k_1,\ldots,k_{m}\geq 0}{k_1+\cdots+k_{m}=\ell} }  
 \frac{x_{m+1}^{k}}{(q;q)_{k}} \frac{x_m^{k_m}}{(q;q)_{k_m}}
 \cdots \frac{x_2^{k_2}}{(q;q)_{k_2}} \frac{x_1^{k_1}}{(q;q)_{k_1}}
\\
&= \sum_{ \Frac{k_1,\ldots,k_{m+1}\geq 0}{k_1+\cdots+k_{m+1}=n} }  
 \frac{x_{m+1}^{k_{m+1}}}{(q;q)_{k_{m+1}}} \frac{x_m^{k_m}}{(q;q)_{k_m}}
 \cdots \frac{x_2^{k_2}}{(q;q)_{k_2}} \frac{x_1^{k_1}}{(q;q)_{k_1}}.
\end{align*}
\end{proof}

\begin{prp}\label{Prop;2.9}
 We have
\begin{equation}\label{P-2.9}
\varphi(\hat{x}_{N-1}\cdots \hat{x}_1 \hat{x}_0) \cdot \widetilde{\mathcal{A}}_R 
= ~:\varphi(\hat{x}_1)\varphi(\hat{x}_2)\cdots \varphi(\hat{x}_N): ,
\end{equation}
where
\begin{equation}
\widetilde{\mathcal{A}}_R := e_q(-\hat{x}_1)  e_q(-\hat{x}_2)  \cdots  e_q(-\hat{x}_{N-1}) 
e_q(-\hat{x}_{N-2} \cdots \hat{x}_1 \hat{x}_0)  \cdots e_q(-\hat{x}_1 \hat{x}_0) e_q(-\hat{x}_0).
\end{equation}
\end{prp}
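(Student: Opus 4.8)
The plan is to compute the normal-ordered product $:\!\varphi(\hat x_1)\cdots\varphi(\hat x_N)\!:$ directly and match it against $\varphi(\hat x_{N-1}\cdots\hat x_1\hat x_0)\,\widetilde{\mathcal A}_R$ term by term, by expanding every $\varphi$ and $e_q$ via the $q$-exponential series \eqref{q-exp1}, \eqref{q-exp2} and tracking the powers of $q$ produced by the $q$-commutation relations \eqref{q-com-1}, \eqref{q-com-2}. First I would recall the basic adjoint/reordering facts: since $\hat x_i = d_i\overline d_i\, x_i q^{\vartheta_i-\vartheta_{i-1}}$, the monomial $\hat x_i^{n_i}$ carries the operator $q^{n_i(\vartheta_i-\vartheta_{i-1})}$, so reordering a product $\hat x_1^{n_1}\cdots\hat x_N^{n_N}$ into its normal-ordered form $:\!\hat x_1^{n_1}\cdots\hat x_N^{n_N}\!:$ (all commutative $x_i$ to the left, all $q^{\vartheta_i}$ to the right) produces an explicit power of $q$ which is a quadratic form in the $n_i$ determined by the Cartan matrix of $A_{N-1}$. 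I would make this quadratic form $Q(n)$ explicit once and for all, since it is the bookkeeping device for the whole argument.

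Next I would observe that $\widetilde{\mathcal A}_R$ is exactly the right block $\mathcal A_R^{(\mathrm h)}$ of higher root type with the $\varphi(q^{1-N}D_N\Lambda)$ prefactor stripped off, and that by the first remark after Definition \ref{def:normal-ordered} one has $\hat x_{N-1}\cdots\hat x_1\hat x_0 = q^{1-N}D_N\Lambda$ as a central element; so the statement is equivalent to $\mathcal A_R^{(\mathrm h)} = \mathcal A_R^{(\mathrm n)}$, i.e. the claim to be proved is precisely the normal-ordering identity announced in the introduction. The heart of the proof is therefore a generating-function computation. I would expand
$\widetilde{\mathcal A}_R = \prod_{i=1}^{N-1} e_q(-\hat x_i)\cdot\prod_{k=N-2}^{0} e_q(-\hat x_k\hat x_{k+1}\cdots\hat x_{N-1})$
using $e_q(z)=\sum_n z^n/(q;q)_n$, collect the total exponent of each monomial $\hat x_1^{m_1}\cdots\hat x_N^{m_N}$ from both the simple-root factors (contributing to $m_i$, $i\le N-1$) and the higher-root factors $e_q(-\hat x_k\cdots\hat x_{N-1})$, reorder everything to normal-ordered form picking up $q^{Q(\cdot)}$, and then sum over all the internal summation indices subject to the constraint that they produce a fixed $(m_1,\dots,m_N)$. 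The resulting coefficient should be recognizable, after applying the $q$-binomial theorem \eqref{q-binom} (this is exactly where the introduction says the $q$-binomial theorem enters), as $(-1)^{\sum m_i}q^{\frac12\sum m_i(m_i-1)}/\prod_i(q;q)_{m_i}$ times the appropriate power of $q$, which is precisely the coefficient of the same monomial in $:\!\prod_{i=1}^N\varphi(\hat x_i)\!: = :\!\prod_i E_q(-\hat x_i)\!:$ on the left after multiplying in $\varphi(q^{1-N}D_N\Lambda)$.

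The main obstacle will be the combinatorics of the exponent bookkeeping: making precise how the higher-root factors $e_q(-\hat x_k\cdots\hat x_{N-1})$ interact, both among themselves and with the simple-root factors, under the noncommutative reordering, and showing that after summing over the internal indices the dependence collapses via a (possibly iterated) application of the $q$-binomial theorem. A clean way to organize this, which I would adopt, is induction on $N$: peel off the variable $\hat x_0$ (equivalently $\hat x_N$) by using that $\hat x_0$ commutes with $\hat x_2,\dots,\hat x_{N-2}$ and $q$-commutes in a controlled way with $\hat x_1$ and $\hat x_{N-1}$, apply the $q$-binomial theorem to the subproduct $e_q(-\hat x_0)e_q(-\hat x_1\hat x_0)\cdots e_q(-\hat x_{N-2}\cdots\hat x_1\hat x_0)$ — which, for fixed powers of $\hat x_1,\dots,\hat x_{N-1}$, is a one-variable $q$-series in $\hat x_0$ — reducing the claim to the same statement with $N-1$ variables (cf. the style of Lemma \ref{lem:adjoint-reduction} and Proposition \ref{Prop:cyclic-sym}). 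The base case $N=2$ is the $\widehat{\mathfrak{gl}}_2$ normal-ordering identity already recorded in Appendix \ref{App-A}. The remaining routine points — tracking the scaling factors $d_i\overline d_i$ and the central power $q^{1-N}D_N\Lambda$, and checking that the $q^{Q}$ factors assemble into the $:\ :$ prescription \eqref{def:normal-ordering} — I would leave as direct verification.
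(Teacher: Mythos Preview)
Your high-level strategy --- expand both sides as $q$-series, normal-order the monomials, and resum --- is the right starting point, and you correctly identify the proposition as the assertion $\mathcal{A}_R^{(\mathrm h)} = \mathcal{A}_R^{(\mathrm n)}$. But the induction on $N$ you sketch does not go through as stated. Peeling off the affine node $\hat x_0=\hat x_N$ leaves the \emph{open} chain $\hat x_1,\ldots,\hat x_{N-1}$ with finite-type $A_{N-1}$ commutation relations, which is not the affine $\widehat{\mathfrak{gl}}_{N-1}$ structure; so the reduced problem is not ``the same statement with $N-1$ variables'' and the inductive hypothesis cannot be invoked. (Two smaller slips: your higher-root factors should be $e_q(-\hat x_k\hat x_{k-1}\cdots\hat x_1\hat x_0)$, not $e_q(-\hat x_k\hat x_{k+1}\cdots\hat x_{N-1})$; and Appendix~\ref{App-A} matches two presentations of the $N=2$ Hamiltonian but does not itself establish the normal-ordering identity you want as a base case.)

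The paper's proof is organized quite differently and avoids induction altogether. Starting from the normal-ordered side, it first sums over the exponents $i_1,\ldots,i_{N-1}$ --- this produces the simple-root factors $e_q(-\hat x_1)\cdots e_q(-\hat x_{N-1})$ immediately --- and is left with a single sum over $i_N$. The key step is to recognize this residual sum as $\sum_{i_N} A^{i_N}(p_1^{-1}\hat x_0)^{i_N}/(q;q)_{i_N}$, where $A=\varphi(\hat x_{N-1})\cdots\varphi(\hat x_1)\,p_1\,\varphi(\hat x_1)^{-1}\cdots\varphi(\hat x_{N-1})^{-1}$ is the $\varphi$-conjugate of the shift $p_1$. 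A short telescoping computation gives the decomposition $A=A_0+A_1+\cdots+A_{N-1}$ with $A_j=(-1)^j\hat x_j\cdots\hat x_1\,p_1$ and $A_jA_{j+1}=qA_{j+1}A_j$; the $q$-\emph{multinomial} formula (not just the $q$-binomial theorem) then splits the $i_N$-sum into precisely the higher-root $e_q$ factors together with the single $\varphi(\hat x_{N-1}\cdots\hat x_1\hat x_0)$. This conjugation--decomposition idea is what your outline is missing; it is what makes the combinatorics collapse without any recursion in $N$.
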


\begin{proof}
First note that for $\mathbf{i}=(i_1,i_2,\ldots, i_N) \in \mathbb{Z}_{\geq 0}^N$,
\begin{align}
{\hat{x}_1}^{i_1} \cdots {\hat{x}_N}^{i_N} 
&=\left(x_1 \frac{p_1}{p_N}\right)^{i_1} \left(x_2 \frac{p_2}{p_1}\right)^{i_2} \cdots \left(x_N \frac{p_N}{p_{N-1}}\right)^{i_N} 
=q^{-i_1 i_N} \prod_{a=1}^N q^{\frac{i_a(i_a-1)}{2}}  :\!\prod_{a=1}^N {\hat{x}_a}^{i_a}\!:.
\end{align}
Hence, by using the expansion formulas \eqref{q-exp1} and  \eqref{q-exp2},  we have
\begin{align}\label{eq:Hasegawa-sum}
 :\varphi(-\hat{x}_1)\varphi(-\hat{x}_2)\cdots \varphi(-\hat{x}_N): ~
=&\sum_{\mathbf{i} \in \mathbb{Z}_{\geq 0}^N} \left( \prod_{a=1}^N \frac{q^{\frac{i_a(i_a-1)}{2}}}{(q;q)_{i_a}}\right)
 :\!{\hat{x}_1}^{i_1} \cdots {\hat{x}_N}^{i_N}\!: \CR
=&\sum_{\mathbf{i} \in \mathbb{Z}_{\geq 0}^N} \frac{(q^{i_N}\hat{x}_1)^{i_1}}{(q;q)_{i_1}}  \frac{{\hat{x}_2}^{i_2}}{(q;q)_{i_2}}\cdots \frac{{\hat{x}_N}^{i_N}}{(q;q)_{i_N}} \nonumber \\
=&\sum_{i_N \geq 0} p_1^{i_N}\frac{1}{\varphi(\hat{x}_1)}  \frac{1}{\varphi(\hat{x}_2)} \cdots \frac{1}{\varphi(\hat{x}_{N-1})}
\frac{(p_1^{-1}\hat{x}_N)^{i_N}}{(q;q)_{i_N}} \nonumber \\
=&\frac{1}{\varphi(\hat{x}_1)}  \frac{1}{\varphi(\hat{x}_2)} \cdots \frac{1}{\varphi(\hat{x}_{N-1})} 
\left( \sum_{i_N \geq 0} A^{i_N} \frac{(p_1^{-1}\hat{x}_N)^{i_N}}{(q;q)_{i_N}} \right), 
\end{align}
where $A$ is defined by
\begin{equation}
A:=\varphi(\hat{x}_{N-1}) \cdots \varphi(\hat{x}_1) \cdot p_1 \cdot \frac{1}{\varphi(\hat{x}_1)}\cdots  \frac{1}{\varphi(\hat{x}_{N-1})}.
\end{equation}
We can decompose $A$ as follows;
\begin{align}
A&=\varphi(\hat{x}_{N-1}) \cdots \varphi(\hat{x}_2) (1-\hat{x}_1) \frac{1}{\varphi(\hat{x}_2)} \cdots  \frac{1}{\varphi(\hat{x}_{N-1})} p_1\nonumber \\
&=\varphi(\hat{x}_{N-1}) \cdots \varphi(\hat{x}_3) (1-\hat{x}_1+\hat{x}_2\hat{x}_1) \frac{1}{\varphi(\hat{x}_3)} \cdots  \frac{1}{\varphi(\hat{x}_{N-1})} p_1\nonumber \\
& \quad \vdots \nonumber \\
&= A_0+A_1+\cdots+ A_{N-1},
\end{align}
where
\begin{equation}
A_0=p_1, \quad A_1=-\hat{x}_1 p_1, \quad A_2=\hat{x}_2\hat{x}_1 p_1, \quad \cdots \quad A_{N-1}=(-1)^{N-1} \hat{x}_{N-1} \cdots \hat{x}_1 p_1.
\end{equation}

To compute the sum on the right hand side of \eqref{eq:Hasegawa-sum}
with $\hat{x}_N=\hat{x}_0$, we note the following;
\begin{enumerate}
\item[(i)]
Since $A_i A_{i+1}=q A_{i+1} A_i$ ($i=0,\ldots, N-1$), we can apply $q$-multinomial formula \eqref{multi-nomial};
\begin{align}
 \frac{ A^{i_N} }{(q;q)_{i_N}}
= \sum_{\substack{k_0+\cdots+k_{N-1}=i_N \\ k_0,k_2, \ldots, k_{N-1} \geq 0}} 
\frac{A_{N-1}^{k_{N-1}}}{(q;q)_{k_{N-1}}}\cdots \frac{{A_0}^{k_0}}{(q;q)_{k_0}}.
\end{align}
\item[(ii)]
$A_i$ and $p_1^{-1} \hat{x}_0$ are commutative for $i=0, \ldots, N-2$. 

\item[(iii)]
Since
$A_{N-1} p_1^{-1} \hat{x}_0=q p_1^{-1} \hat{x}_0 A_{N-1}$ we have
\begin{align}
A_{N-1}^{k_{N-1}} (p_1^{-1} \hat{x}_0)^{k_{N-1}}&=q^{\frac{1}{2}k_{N-1}(k_{N-1}-1)}(A_{N-1} p_1^{-1} \hat{x}_0)^{k_{N-1}} \nonumber \\
&=q^{\frac{1}{2}k_{N-1}(k_{N-1}-1)} ((-1)^{N-1}\hat{x}_{N-1} \hat{x}_{N-2} \cdots \hat{x}_1 \hat{x}_0)^{k_{N-1}}.
\end{align}
\end{enumerate}
Hence, we have
\begin{align}
\sum_{i_N\geq 0} \frac{ A^{i_N} }{(q;q)_{i_N}}(p_1^{-1} \hat{x}_0)^{i_N}&
 =\sum_{k_0,k_2, \ldots, k_{N-1} \geq 0} \frac{A_{N-1}^{k_{N-1}} (p_1^{-1} \hat{x}_0)^{k_{N-1}}}{(q;q)_{k_{N-1}} }
  \frac{(A_{n-2} p_1^{-1} \hat{x}_0)^{k_{N-2}}}{(q;q)_{k_{N-2}} }
 \cdots \frac{(A_{0} p_1^{-1} \hat{x}_0)^{k_{0}}}{(q;q)_{k_{0}} } \nonumber \\
&=\varphi((-1)^{N}\hat{x}_{N-1}\cdots \hat{x}_1 \hat{x}_0)
\frac{1}{\varphi((-1)^{N-2}\hat{x}_{N-2}\cdots \hat{x}_1\hat{x}_0)}\cdots 
\frac{1}{\varphi(-\hat{x}_1\hat{x}_0)}
\frac{1}{\varphi(\hat{x}_0)}.
\end{align}
We finally obtain 
\begin{align}
 &:\varphi(-\hat{x}_1)\varphi(-\hat{x}_2)\cdots \varphi(-\hat{x}_N): \CR
 =& \frac{1}{\varphi(\hat{x}_1)}  \frac{1}{\varphi(\hat{x}_2)} \cdots \frac{1}{\varphi(\hat{x}_{N-1})} 
 \varphi((-1)^{N}\hat{x}_{N-1}\cdots \hat{x}_1 \hat{x}_0) \CR
& \qquad \times \frac{1}{\varphi((-1)^{N-2}\hat{x}_{N-2}\cdots \hat{x}_1\hat{x}_0)}\cdots 
\frac{1}{\varphi(-\hat{x}_1\hat{x}_0)}
\frac{1}{\varphi(\hat{x}_0)}.
\end{align}
By replacing $\hat{x}_i$ with $-\hat{x}_i$, 
this implies the desired relation. 
\end{proof}
Since Proposition \ref{A_LR-reduction} implies that the left hand side of \eqref{P-2.9} is
equal to $\mathcal{A}_R^{(\mathrm{s})} = \mathcal{A}_R^{(\mathrm{h})}$,
Proposition \ref{F=N} for $\mathcal{A}_R$ follows from Proposition \ref{Prop;2.9}.
Note that $\hat{x}_{N-1}\cdots \hat{x}_1 \hat{x}_0 = q^{1-N} D_N \Lambda$ is central.


\subsection{Classical analogue of $\mathcal{A}_L^{(\mathrm{s})}$ and $\mathcal{A}_R^{(\mathrm{s})}$}
\label{classical-AL-AR}

This subsection is an interesting detour. Logically it is not necessary 
for the following sections and may be skipped. But we would like to make a remark on the 
factorization of the classical cyclic matrix, which is instructive for understanding 
$\mathcal{A}_L^{(\mathrm{s})}$ and $\mathcal{A}_R^{(\mathrm{s})}$ in the $\glN$ Hamiltonian.
For $0\leq i\leq n-1$ and $x \in {\mathbb C}$, let $J_i(x)$ be the $n \times n$ elementary Jacobi matrix defined as
\begin{align}
&J_i(x)=\exp(x e_i)={\bf 1}+x e_i, \nonumber \\
&e_i=E_{i,i+1}, (1\leq i\leq n-1), \quad e_0=z E_{n,1},
\end{align}
where ${\bf 1}={\bf 1}_n$ is the identity matrix and $E_{i,j}$ is the matrix unit: $(E_{i,j})_{k,l}=\delta_{i,k}\delta_{j,l}$.
We define the matrix $X$ by
\begin{equation}
X={\bf 1}+\sum_{i=0}^{n-1} x_i e_i=\left[
\begin{array}{cccccc}
 1 & x_1&&&  \\
 & 1 & x_2   \\
     &&\ddots&\ddots\\
  &&& 1 & x_{n-1} \\
 x_0 z && &  & 1 \\
\end{array}
\right],
\end{equation}
which is manifestly cyclic and plays fundamental role in tropical/geometric crystal and discrete integrable systems.

We have the following factorization of the cyclic matrix $X$, where $X^{\pm 1}$ 
may be viewed as the classical analog of ${\mathcal A}_L^{(\mathrm{s})}$ and ${\mathcal A}_R^{(\mathrm{s})}$
which enjoy the cyclic symmetry. 
\begin{lem}
The matrix $X$ is decomposed as
\begin{equation}\label{classical-X}
X=g  J_{0}(x_0) d_{n}(v z) g^{-1} \cdot J_{n-1}(x_{n-1})\cdots J_{2}(x_2) J_{1}(x_1),
\end{equation}
where
\begin{align}
&g=J_{n-2}(x_{n-2})\cdots  J_{2}(x_2)J_{1}(x_1),\nonumber \\
& d_i(x)={\bf 1}+x E_{i,i}, \  v=(-1)^{n-1} \prod_{i=0}^{n-1}x_i.
\end{align}
\end{lem}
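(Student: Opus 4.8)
The plan is to prove the factorization \eqref{classical-X} by a direct computation, organizing the right-hand side into three parts: the conjugated middle factor $P := g\, J_0(x_0)\, d_n(vz)\, g^{-1}$, and the descending product $Q := J_{n-1}(x_{n-1}) \cdots J_2(x_2) J_1(x_1)$. First I would record the elementary facts that $g = J_{n-2}(x_{n-2}) \cdots J_1(x_1)$ is unipotent upper triangular with $1$'s on the diagonal, that it acts only on the first $n-1$ coordinates (it fixes $e_n$ and the last row is the standard basis covector $e_n^\ast$), and that $J_0(x_0) = {\bf 1} + x_0 z E_{n,1}$ differs from the identity only in the $(n,1)$ entry. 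Since $g$ is upper triangular and invertible, $g E_{n,1} g^{-1}$ has a manageable form: $g^{-1}$ has first column $e_1$ (as $g e_1 = e_1$), so $g E_{n,1} g^{-1} = g e_1 \otimes (e_n^\ast g^{-1}) = e_1 \otimes e_n^\ast = E_{n,1}$, because $g$ fixes $e_1$ and $e_n^\ast g^{-1} = e_n^\ast$. Hence $g J_0(x_0) g^{-1} = J_0(x_0)$, and the middle part collapses to $P = J_0(x_0)\,\big(g\, d_n(vz)\, g^{-1}\big)$.

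Next I would compute $g\, d_n(vz)\, g^{-1}$. Writing $d_n(vz) = {\bf 1} + vz\, E_{n,n}$, we get $g E_{n,n} g^{-1} = (g e_n) \otimes (e_n^\ast g^{-1}) = e_n \otimes e_n^\ast = E_{n,n}$ by the same two observations ($g$ fixes $e_n$ and $e_n^\ast$ is a left eigenvector of $g$), so in fact $g\, d_n(vz)\, g^{-1} = d_n(vz)$ as well. This is a pleasant simplification: the conjugation by $g$ is invisible to both $J_0$ and $d_n$. Therefore the claimed identity reduces to
\begin{equation}
X = J_0(x_0)\, d_n(vz)\, J_{n-1}(x_{n-1}) \cdots J_2(x_2)\, J_1(x_1),
\end{equation}
and I would now just multiply out the right-hand side. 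The product $J_{n-1}(x_{n-1}) \cdots J_1(x_1)$ is the bidiagonal upper-triangular matrix ${\bf 1} + \sum_{i=1}^{n-1} x_i E_{i,i+1}$ (these elementary matrices multiply with no interference since $E_{i,i+1}E_{j,j+1} = 0$ whenever the product is taken in decreasing order of indices), which already agrees with $X$ except in the last row. Left-multiplying by $d_n(vz) = {\bf 1} + vz E_{n,n}$ multiplies the last row by $1 + vz$, which is harmless on the $(n,n)$ entry only if we are careful; more precisely it scales the entry $(n,n)$ to $1 + vz$. Then left-multiplying by $J_0(x_0) = {\bf 1} + x_0 z E_{n,1}$ adds $x_0 z$ times the first row to the last row, producing the entry $(n,1) = x_0 z$ and adjusting $(n,n)$. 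One checks that the entry $(n,n)$ of the product is $1 + vz + x_0 z \cdot (\text{contribution along the bidiagonal chain from column } 1 \text{ to } n)$; the chain contributes $(-1)^{n} x_1 x_2 \cdots x_{n-1}$ up to sign bookkeeping, and the choice $v = (-1)^{n-1} \prod_{i=0}^{n-1} x_i$ is exactly what makes $1 + vz$ cancel against this, leaving $(n,n) = 1$ as required for $X$.

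The one genuine point requiring care — and the step I expect to be the main obstacle — is the sign and index bookkeeping in the last row: tracking how the single nonzero off-diagonal entry $x_0 z$ in $J_0$ propagates through the bidiagonal matrix $J_{n-1}(x_{n-1}) \cdots J_1(x_1)$ to land correctly in columns $2, 3, \dots, n$, and confirming that all of these spurious entries vanish except the desired $(n,1)$ entry, with the $(n,n)$ entry correctly cancelled by $d_n(vz)$. This amounts to computing $e_1^\ast \big(\prod_{i=n-1}^{1} J_i(x_i)\big)$, i.e. the first row of the bidiagonal matrix, which is just $e_1^\ast + x_1 e_2^\ast$; so in fact adding $x_0 z$ times the first row to the last row only creates entries in columns $1$ and $2$. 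One then checks the $(n,2)$ entry: it is $x_0 z x_1$ from $J_0$ acting, but it must be cancelled — and indeed $d_n(vz)$ acting first scales the original last row, so the ordering $J_0 \cdot d_n \cdot (\text{bidiagonal})$ must be examined with the multiplications applied in the correct order. I would resolve this by simply writing the three $3\times 3$ and $4\times 4$ cases explicitly to fix conventions, then give the general argument via the rank-one structure $g E_{a,b} g^{-1} = (g e_a)(e_b^\ast g^{-1})$ used above. Everything else is routine matrix multiplication, and the lemma follows.
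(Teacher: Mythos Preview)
Your overall strategy (direct matrix computation) is the same as the paper's one-line proof, but the central simplification you claim is false. The error is in the rank-one decomposition of $E_{n,1}$: with the paper's convention $(E_{i,j})_{k,l}=\delta_{i,k}\delta_{j,l}$ one has $E_{n,1}=e_n\, e_1^\ast$, not $e_1\, e_n^\ast$. Hence
\[
g\, E_{n,1}\, g^{-1} \;=\; (g e_n)\,(e_1^\ast g^{-1}) \;=\; e_n \cdot (e_1^\ast g^{-1}),
\]
and while $g e_n = e_n$ is correct, $e_1^\ast g^{-1}$ is the first \emph{row} of $g^{-1}$, which equals
$(1,\ -x_1,\ x_1 x_2,\ \ldots,\ (-1)^{n-2} x_1 \cdots x_{n-2},\ 0)$, not $e_1^\ast$. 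So $g J_0(x_0) g^{-1} \neq J_0(x_0)$: the conjugation by $g$ is precisely what generates the entries $(-1)^j x_0 x_1 \cdots x_j\, z$ in positions $(n,j+1)$ for $j=1,\ldots,n-2$, as in the display immediately after the lemma.

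Consequently your reduced identity $X = J_0(x_0)\, d_n(vz)\, J_{n-1}(x_{n-1}) \cdots J_1(x_1)$ is already false for $n=3$: the right-hand side has $(3,2)$-entry $x_0 x_1 z$ and $(3,3)$-entry $1 + x_0 x_1 x_2 z$, whereas $X$ has $0$ and $1$ there. No reordering of $J_0$ and $d_n$ repairs this; the twist by $g$ is the whole point of the lemma (it is the classical shadow of the twist by $G_L$, $G_R$ in Definition~\ref{factorized1}). To fix the argument, keep the conjugated factor, compute
\[
g\, J_0(x_0)\, d_n(vz)\, g^{-1} \;=\; {\bf 1} + \sum_{j=0}^{n-1} (-1)^j x_0 x_1 \cdots x_j\, z\, E_{n,j+1}
\]
directly from the first row of $g^{-1}$ above together with $g\, d_n(vz)\, g^{-1}=d_n(vz)$, and then right-multiply by $J_{n-1}\cdots J_1$; the last-row cancellations are then genuinely routine.
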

\begin{proof}
A straightforward  matrix computation.
\end{proof}

Note that the first factor in \eqref{classical-X} can be written various ways as
 \begin{align}
 &g  J_{0}(x_0) d_{n}(v z) g^{-1} \nonumber \\
 =&\prod_{j=0}^{n-1}({\bf 1}+ (-1)^{j} x_0 x_1 \cdots x_{j} z E_{n,j+1} )\nonumber \\
 =&{\bf 1}+\sum_{j=0}^{n-1}  (-1)^j x_0 x_1 \cdots x_{j} z E_{n,j+1} \nonumber \\
 =&\left[
\begin{array}{cccc|c}
 &&{\bf 1}_{n-1} &&{\bf 0}    \\
 \hline
 x_0 z &-x_0x_1z  & \cdots & (-1)^{n-2}x_0 \cdots x_{n-2}z& vz 
\end{array}
\right].
 \end{align}


\subsection{Other forms of $\glN$ Hamiltonian}
\label{subsec:other-form}
\newcommand{\hathat}[1]{\hat{\hat{#1}}}

The $\glN$ Hamiltonian involves the $q$-exponential function with $q$-commutative variables
$\hat{x}_i$ and $\check{x}_i$. We can recast it in such a form that the arguments of the $q$-exponential function
are commutative variables $x_i$ by moving the position of the $q$-Borel transformation.

Let $\vartheta:= x\frac{\partial}{\partial x}$.
Since 
$\vartheta^k x = x (1+\vartheta)^k$ for any natural number $k\in\bN$,
we have
$q^{\vartheta} x = x q^{1+\vartheta}$ and
$q^{\ha\vartheta(\vartheta-1)} x 
= x q^{\ha(1+\vartheta)\vartheta}
= x q^{\vartheta+\ha\vartheta(\vartheta-1)}$.
Therefore,
$q^{c\vartheta} x^n =(q^c x)^n q^{c\vartheta}$ and
$q^{\ha\vartheta(\vartheta-1)} x^n 
= (x q^\vartheta)^n q^{\ha\vartheta(\vartheta-1)}$
for any integer $n\in\bZ$ and $c\in\bC$.

Assume $N\geq 3 $. For $N=3$, 
$f_{N-2}\cdots f_2 f_1$ stands for $f_1$.
Let us set $\alpha_i=1$ for simplicity, so that $\hat{x}_i = x_i q^{\vartheta_i - \vartheta_{i-1}}$.
\begin{prp}\label{Prop 2.10}
Let ${\doublehat{x}}_i := x_i q^{\ha(\vartheta_{i+1} - \vartheta_{i-1})}$. Then we have
\begin{align}
&
e_q(-\hat x_{1})e_q(-\hat x_{2})
\cdots
e_q(-\hat x_{N-1})
\cr
&
\hskip83.2pt \times
e_q(-\hat x_{N-2})^{-1}
\cdots
e_q(- \hat x_{2})^{-1} 
e_q(-\hat x_{1})^{-1}
\cr
&
\hskip43.2pt \times
e_q(-\hat x_{0})
e_q(-\hat x_{1})
\cdots
e_q(-\hat x_{N-2})
q^{\ha\sum_{i=1}^N\vartheta_i(\vartheta_i-\vartheta_{i-1})} 
\label{eq:prophat}
\\
&=
q^{\ha\sum_{i=1}^N\vartheta_i(\vartheta_i-\vartheta_{i-1}-1)} 
e_q(-\doublehat x_{1})e_q(-\doublehat x_{2})
\cdots
e_q(-\doublehat x_{N-1})
\cr
&
\hskip96pt \times
e_q(-\doublehat x_{N-2})^{-1}
\cdots
e_q(-\doublehat x_{2})^{-1}
e_q(-\doublehat x_{1})^{-1}
\cr
&
\hskip56.5pt \times
e_q(-\doublehat x_{0})
e_q(-\doublehat x_{1})
\cdots
e_q(-\doublehat x_{N-2})
q^{\ha\sum_{i=1}^N\vartheta_i} 
\label{eq:prophathat}
\\
&=
q^{\ha\sum_{i=1}^N\vartheta_i(\vartheta_i-1)} 
q^{-\vartheta_{0}\vartheta_{1}}  e_q(-x_{1})
q^{-\vartheta_{1}\vartheta_{2}}  e_q(-x_{2})
\cdots
q^{-\vartheta_{N-2}\vartheta_{N-1}}  e_q(-x_{N-1}) 
\cr
&\hskip38pt\times
q^{\vartheta_{N-1}\vartheta_{N-2}}  e_q(-x_{N-2})^{-1} 
q^{\vartheta_{N-2}\vartheta_{N-3}}  
\cdots
e_q(-x_{2})^{-1} 
q^{\vartheta_{2}\vartheta_{1}}  e_q(-x_{1})^{-1} 
q^{\vartheta_{1}\vartheta_{0}-\vartheta_{N-1}\vartheta_{0}}  
\cr
&\hskip17pt\times
e_q(-x_{0}) 
q^{-\vartheta_{0}\vartheta_{1}}  e_q(-x_{1}) 
q^{-\vartheta_{1}\vartheta_{2}}  
\cdots
e_q(-x_{N-2}) 
q^{-\vartheta_{N-2}\vartheta_{N-1}} 
q^{\ha\sum_{i=1}^N\vartheta_i(1+\vartheta_{i+1})} .
\label{eq:prop}
\end{align}
\end{prp}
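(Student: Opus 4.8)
The plan is to prove the chain of equalities \eqref{eq:prophat} $=$ \eqref{eq:prophathat} $=$ \eqref{eq:prop} by pushing the quadratic-in-$\vartheta$ operators (the ``$q$-Borel'' pieces) through the $q$-exponential factors one at a time, using the elementary conjugation rules $q^{c\vartheta_i}x_j^n = (q^{c\delta_{ij}}x_j)^n q^{c\vartheta_i}$ and $q^{\frac12\vartheta_i(\vartheta_i-1)}x_i^n = (x_i q^{\vartheta_i})^n q^{\frac12\vartheta_i(\vartheta_i-1)}$ already recorded in Subsection~\ref{subsec:other-form}, together with the observation (see \eqref{adjoint-onpower}) that $\operatorname{Ad}(q^{\frac12(\vartheta_i-\vartheta_{i-1})^2})$ sends $x_i^n$ to $q^{n/2}\hat{x}_i^n$ and hence converts $e_q(-x_i)$-type factors into $e_q(-\hat x_i)$-type factors.

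\textbf{Step 1 (from $\doublehat x$ to $\hat x$).} First I would establish \eqref{eq:prophathat} $=$ \eqref{eq:prophat}. The key identity is that $\doublehat x_i = x_i q^{\frac12(\vartheta_{i+1}-\vartheta_{i-1})}$ is obtained from $\hat x_i = x_i q^{\vartheta_i-\vartheta_{i-1}}$ by the adjoint action of a suitable quadratic operator; more precisely one checks on monomials that conjugating the whole string of $e_q$'s in \eqref{eq:prophathat} by $q^{\frac12\sum_i(\vartheta_i^2-\vartheta_i)}\cdot q^{-\frac12\sum_i\vartheta_i}$ relative to \eqref{eq:prophat}, the bookkeeping of the prefactor and postfactor ($q^{\frac12\sum\vartheta_i(\vartheta_i-\vartheta_{i-1}-1)}$ versus $q^{\frac12\sum\vartheta_i(\vartheta_i-\vartheta_{i-1})}$ on one side and $q^{\frac12\sum\vartheta_i}$ versus $q^{\frac12\sum\vartheta_i(\vartheta_i-1)}\cdot q^{\frac12\sum\vartheta_i(1+\vartheta_{i+1})}$ on the other) works out exactly. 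Since every factor is a power series in the relevant $q$-commuting monomial, it suffices to verify the conjugation rule on a single power $\hat x_{j}^{\,n}$ (or a product like $(\hat x_j\cdots\hat x_k)^n$), which reduces to the scalar identity for $q^{\frac12(\vartheta-1)\vartheta}$.

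\textbf{Step 2 (from $\hat x$ to commuting $x$, the scattered-$q$ form).} Next I would move the remaining Borel factor $q^{\frac12\sum_i\vartheta_i(\vartheta_i-\vartheta_{i-1})}$ \emph{into} the product, splitting it as $q^{\frac12\Delta} = \prod$ of two-index pieces and commuting each $q^{-\vartheta_{i-1}\vartheta_i}$-type factor past the $e_q$'s to its left; each time such a factor crosses $e_q(-\hat x_j)$ it turns $\hat x_j = x_j q^{\vartheta_j-\vartheta_{j-1}}$ into the commuting $x_j$ at the cost of depositing a $q^{\pm\vartheta_{j-1}\vartheta_j}$ (or $q^{\vartheta_1\vartheta_0-\vartheta_{N-1}\vartheta_0}$ at the seam), producing precisely the interleaved pattern in \eqref{eq:prop}. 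The $N=3$ case displayed in \eqref{eq:A_Lcom} is exactly this computation and can serve as the template; the general case is the same induction on $N$ used in Lemma~\ref{lem:adjoint-reduction}, only with the roles of ``pentagon move'' replaced by ``commute a quadratic exponential past a $q$-exponential.''

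\textbf{Main obstacle.} The genuine difficulty is purely combinatorial: keeping track of the exact exponents of $q$ of the form (quadratic in $\vartheta_i$) as one drags the Borel factor through the $N-2$ ``simple-root'' exponentials, the $N-2$ ``higher-root'' exponentials $e_q(-\hat x_{N-2}\cdots\hat x_0)$, and the inverse block $e_q(-\hat x_{N-2})^{-1}\cdots e_q(-\hat x_1)^{-1}$, and checking that what is left on the far right is the stated $q^{\frac12\sum\vartheta_i(1+\vartheta_{i+1})}$ with no residual cross terms. I expect the cleanest way to organize this is to verify the whole identity by its action on an arbitrary monomial $x^\nu = \prod x_i^{\nu_i}$: each $e_q$ factor acts by a concrete power series in the shifted weight, and both sides become identical generating functions once the quadratic Gaussian-type prefactors $q^{Q(\nu)}$ are matched. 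Matching $Q(\nu)$ is a finite quadratic-form identity in $\nu_1,\dots,\nu_N$, which is routine to check but must be done carefully at the cyclic seam $x_0=x_N$.
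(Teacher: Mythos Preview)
Your approach is correct and essentially the same as the paper's: reduce to monomials and push the quadratic-in-$\vartheta$ operators through using the commutation rules you cite. The paper streamlines the ``main obstacle'' you anticipate by observing that all three expressions share the same $e_q$ Taylor coefficients $1/(q;q)_n$, so it suffices to replace every $e_q(-y)$ factor by a single power $y^k$ and prove the resulting purely monomial identity (the lemma stated immediately after the proposition); at that level there are no infinite sums to track, and the identity follows directly from $q^{\frac12\sum_j\vartheta_j(\vartheta_j-1)}x_i^n=(x_iq^{\vartheta_i})^n\,q^{\frac12\sum_j\vartheta_j(\vartheta_j-1)}$ and $q^{-\vartheta_{i-1}\vartheta_i}x_i^n=(x_iq^{-\vartheta_{i-1}})^n\,q^{-\vartheta_{i-1}\vartheta_i}$ moved to the right (giving \eqref{eq:prophat}$=$\eqref{eq:prop}) and the analogous pair moved to the left (giving \eqref{eq:prophathat}$=$\eqref{eq:prop}).
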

%
\begin{proof}
By the lemma below, 
each term in the Taylor series of (\ref{eq:prophat})--(\ref{eq:prop}) 
in $\hat x_i$, $\doublehat x_i$ and $x_i$'s 
coincides each other. 
\end{proof}

\begin{lem}
For any integers $\ell_i$, $m_i$, $n_i\in\bZ$ $(i\in\bZ)$, 
\begin{align}
&
\hat x_{1}^{\ell_{1}}\hat x_{2}^{\ell_{2}}
\cdots
\hat x_{N-1}^{\ell_{N-1}}
\cdot
\hat x_{N-2}^{m_{N-2}}
\cdots
\hat x_{2}^{m_{2}}
\hat x_{1}^{m_{1}}
\cdot
\hat x_{0}^{n_{0}}
\hat x_{1}^{n_{1}}
\cdots
\hat x_{N-2}^{n_{N-2}}
q^{\ha\sum_{i=1}^N\vartheta_i(\vartheta_i-\vartheta_{i-1})} 
\label{eq:lemmahat}
\\
&=
q^{\ha\sum_{i=1}^N\vartheta_i(\vartheta_i-\vartheta_{i-1}-1)} 
\doublehat x_{1}^{\ell_{1}}\doublehat x_{2}^{\ell_{2}}
\cdots
\doublehat x_{N-1}^{\ell_{N-1}}
\cdot
\doublehat x_{N-2}^{m_{N-2}}
\cdots
\doublehat x_{2}^{m_{2}}
\doublehat x_{1}^{m_{1}}
\cdot
\doublehat x_{0}^{n_{0}}
\doublehat x_{1}^{n_{1}}
\cdots
\doublehat x_{N-2}^{n_{N-2}}
q^{\ha\sum_{i=1}^N\vartheta_i} 
\label{eq:lemmahathat}
\\
&=
q^{\ha\sum_{i=1}^N\vartheta_i(\vartheta_i-1)} 
q^{-\vartheta_{0}\vartheta_{1}}  x_{1}^{\ell_{1}} 
q^{-\vartheta_{1}\vartheta_{2}}  x_{2}^{\ell_{2}} 
\cdots
q^{-\vartheta_{N-2}\vartheta_{N-1}}  x_{N-1}^{\ell_{N-1}} 
\cr
&\hskip54pt\times
q^{\vartheta_{N-1}\vartheta_{N-2}}  x_{N-2}^{m_{N-2}} 
q^{\vartheta_{N-2}\vartheta_{N-3}}  
\cdots
x_{2}^{m_{2}} 
q^{\vartheta_{2}\vartheta_{1}}  x_{1}^{m_{1}} 
q^{\vartheta_{1}\vartheta_{0}-\vartheta_{N-1}\vartheta_{0}}  
\cr
&\hskip55pt\times
x_{0}^{n_{0}} 
q^{-\vartheta_{0}\vartheta_{1}}  x_{1}^{n_{1}} 
q^{-\vartheta_{1}\vartheta_{2}}  
\cdots
x_{N-2}^{n_{N-2}} 
q^{-\vartheta_{N-2}\vartheta_{N-1}} 
q^{\ha\sum_{i=1}^N\vartheta_i(1+\vartheta_{i+1})} .
\label{eq:lemma}
\end{align}
\end{lem}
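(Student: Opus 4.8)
The plan is to verify the three displayed monomial identities \eqref{eq:lemmahat}--\eqref{eq:lemma} by a direct but carefully bookkept computation, moving every $q^{\vartheta_i}$ past every $x_j$ using the elementary rule $q^{c\vartheta}x^n = (q^c x)^n q^{c\vartheta}$ and $q^{\frac12\vartheta(\vartheta-1)}x^n = (xq^{\vartheta})^n q^{\frac12\vartheta(\vartheta-1)}$ established just before Proposition \ref{Prop 2.10}. First I would fix an arbitrary tuple of integers $(\ell_i,m_i,n_i)$ and work only with the monomials: the lemma is purely an identity between products of powers of $x_i$ and operators of the form $q^{\text{quadratic in }\vartheta_i}$, so no convergence issue arises and it suffices to check equality of coefficients after pushing everything to a canonical form $\big(\prod_i x_i^{N_i}\big)\cdot q^{Q(\vartheta)}$, where $N_i$ is the total exponent of $x_i$ appearing on each side (these visibly agree across all three lines) and $Q$ is a quadratic form in the $\vartheta_i$.

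The first equality \eqref{eq:lemmahat}$=$\eqref{eq:lemmahathat} I would obtain by writing $\hat x_i = x_i q^{\vartheta_i-\vartheta_{i-1}}$ versus $\doublehat x_i = x_i q^{\frac12(\vartheta_{i+1}-\vartheta_{i-1})}$ and observing that conjugating the left-hand product by $q^{\frac12\Delta}$, with $\Delta = \sum_i(\vartheta_i^2-\vartheta_i\vartheta_{i-1})$ as in \eqref{dfn:Delta}, converts $\hat x_i$ into $\doublehat x_i$ up to the scalar powers of $q$ that are already recorded in the prefactors $q^{\frac12\sum\vartheta_i(\vartheta_i-\vartheta_{i-1})}$ versus $q^{\frac12\sum\vartheta_i(\vartheta_i-\vartheta_{i-1}-1)}$ and $q^{\frac12\sum\vartheta_i}$; concretely this is the identity $\operatorname{Ad}(q^{\frac12(\vartheta_i-\vartheta_{i-1})^2})(x_i)^n = q^{n/2}(\hat x_i)^n$ already quoted in the excerpt, reorganized. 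The second equality \eqref{eq:lemmahathat}$=$\eqref{eq:lemma} is the substantive one: here I would move each $q^{\frac12\sum\vartheta_i}$ and $q^{\frac12\sum\vartheta_i(\vartheta_i-\vartheta_{i-1})}$-type factor entirely to the two ends, which replaces each $\doublehat x_i = x_i q^{\frac12(\vartheta_{i+1}-\vartheta_{i-1})}$ by a plain $x_i$ at the cost of depositing, between consecutive letters $x_j$ and $x_{j+1}$, exactly the interaction factor $q^{-\vartheta_j\vartheta_{j+1}}$ (and $q^{+\vartheta_j\vartheta_{j+1}}$ for the inverse-ordered middle block, $q^{\vartheta_1\vartheta_0-\vartheta_{N-1}\vartheta_0}$ at the junction involving $x_0$); these are precisely the scattered quadratic insertions displayed on the right of \eqref{eq:lemma}, and the claim is that the residual prefactors collapse to $q^{\frac12\sum\vartheta_i(\vartheta_i-1)}$ and $q^{\frac12\sum\vartheta_i(1+\vartheta_{i+1})}$.

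Once the Lemma is in hand, Proposition \ref{Prop 2.10} follows immediately by summing over $(\ell_i,m_i,n_i)\ge 0$ against the coefficients $\prod\frac{(\cdot)}{(q;q)_{(\cdot)}}$ coming from the Taylor expansions of $e_q$ and $e_q^{-1}=\varphi$, exactly as indicated in the one-line proof in the excerpt — term by term the three sides agree, so the generating series agree. The main obstacle is the bookkeeping in the second equality: one must track the sign and index-shift of each $q^{\pm\vartheta_i\vartheta_j}$ as it is commuted past the long alternating string of $x$'s and $x^{-1}$'s (including the reversed middle block $x_{N-2}^{m_{N-2}}\cdots x_1^{m_1}$ and the $x_0$ junction), and confirm that the accumulated quadratic exponent telescopes to precisely $\frac12\sum\vartheta_i(\vartheta_i-1) + \frac12\sum\vartheta_i(1+\vartheta_{i+1})$ with no cross terms left over. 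This is a finite, mechanical check — it is "a straightforward matter of moving $\vartheta_i$ to the right as if it were a commuting variable inside the normal ordering," in the language of the paragraph following Definition \ref{def:normal-ordered} — but it is the only place where something could go wrong, so I would organize it as an induction on $N$ or, equivalently, verify the single elementary move $q^{-\vartheta_j\vartheta_{j+1}}x_{j+1}^{a} = x_{j+1}^{a} q^{-\vartheta_j\vartheta_{j+1}} q^{-a\vartheta_j}$ and then assemble.
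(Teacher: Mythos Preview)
Your proposal is correct and uses essentially the same idea as the paper: both arguments reduce the lemma to the elementary commutation rules $q^{\frac12\sum_j\vartheta_j(\vartheta_j-1)}x_i^n=(x_iq^{\vartheta_i})^nq^{\frac12\sum_j\vartheta_j(\vartheta_j-1)}$, $q^{-\vartheta_{i-1}\vartheta_i}x_i^n=(x_iq^{-\vartheta_{i-1}})^nq^{-\vartheta_{i-1}\vartheta_i}$, and their left-moving analogues. The only organizational difference is that the paper takes \eqref{eq:lemma} as the pivot --- pushing its $q$-factors to the right produces \eqref{eq:lemmahat} directly, and pushing them to the left produces \eqref{eq:lemmahathat} --- whereas you chain \eqref{eq:lemmahat}$\to$\eqref{eq:lemmahathat}$\to$\eqref{eq:lemma}; your first step is exactly the alternative route the paper records just after the proof via $q^{\frac12\sum_j\vartheta_j(\vartheta_j-\vartheta_{j-1}-1)}\doublehat x_i^n=\hat x_i^n\,q^{\frac12\sum_j\vartheta_j(\vartheta_j-\vartheta_{j-1}-1)}$.
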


\begin{proof}
With the formulas
\begin{align}
q^{\ha\sum_{j=1}^N\vartheta_j(\vartheta_j-1)} 
x_i^n 
&= 
\left(x_i q^{\vartheta_i}\right)^n 
q^{\ha\sum_{j=1}^N\vartheta_j(\vartheta_j-1)} ,
\\
q^{-\vartheta_{i-1}\vartheta_{i}}  x_{i}^{n} 
&=
\left(x_i q^{-\vartheta_{i-1}}\right)^n 
q^{-\vartheta_{i-1}\vartheta_{i}},
\end{align}
we can move $q^{\ha\sum_{i=1}^N\vartheta_i(\vartheta_i-1)}$ and
$q^{-\vartheta_{i-1}\vartheta_{i}}$ in $(\ref{eq:lemma})$ to the right,
which yields  the equality of $(\ref{eq:lemmahat})$ and $(\ref{eq:lemma})$.

Similarly we can move $q^{\ha\sum_{i=1}^N\vartheta_i\vartheta_{i+1}} $ and
$q^{-\vartheta_{i}\vartheta_{i+1}} $ in $(\ref{eq:lemma})$ to the left with the formulas
\begin{align}
x_i^n 
q^{\ha\sum_{j=1}^N\vartheta_j\vartheta_{j+1}} 
&= 
q^{\ha\sum_{j=1}^N\vartheta_j\vartheta_{j+1}}
\left(x_i q^{-\ha(\vartheta_{i+1}+\vartheta_{i-1})}\right)^n ,
\\
x_{i}^{n} 
q^{-\vartheta_{i}\vartheta_{i+1}}  
&=
q^{-\vartheta_{i}\vartheta_{i+1}}  
\left(x_i q^{\vartheta_{i+1}}\right)^n,
\end{align}
which gives the equality of $(\ref{eq:lemmahathat})$ and $(\ref{eq:lemma})$.
\end{proof}
%
Since
\begin{align}
q^{\vartheta_i(\vartheta_i-\vartheta_{i-1}-\vartheta_{i+1}-1)} 
x_i
&= 
x_i 
q^{(1+\vartheta_i)(\vartheta_i-\vartheta_{i-1}-\vartheta_{i+1})} 
\cr
&= 
x_i 
q^{(\vartheta_i-\vartheta_{i-1})+(\vartheta_i-\vartheta_{i+1})}
q^{\vartheta_i(\vartheta_i-\vartheta_{i-1}-\vartheta_{i+1}-1)} ,
\end{align}
we obtain
\be
q^{\sum_{j=1}^N\vartheta_j(\vartheta_j-\vartheta_{j-1}-1)} 
x_i^n 
= 
\left(x_i q^{(\vartheta_i-\vartheta_{i-1})+(\vartheta_i-\vartheta_{i+1})}\right)^n 
q^{\sum_{j=1}^N\vartheta_j(\vartheta_j-\vartheta_{j-1}-1)} .
\ee
Therefore, we have
\be
q^{\ha\sum_{j=1}^N\vartheta_j(\vartheta_j-\vartheta_{j-1}-1)} 
\doublehat x_{i}^n 
= 
\hat x_{i}^n 
q^{\ha\sum_{j=1}^N\vartheta_j(\vartheta_j-\vartheta_{j-1}-1)} ,
\ee
which also yields the equality of $(\ref{eq:lemmahat})$ and $(\ref{eq:lemmahathat})$.

\begin{rmk}
The equation $(\ref{eq:prophat})$ equals to 
$\varphi(q^{1-N}D_N\Lambda)^{-1}{\mathcal A}_R^{(s)}q^{\ha\Delta}$. 
Since $e_{q^{-1}}(x)=e_q(qx)^{-1}$, 
by replacing $q$ and $x_i$'s with $1/q$ and $x_i/q$'s, respectively,  
the equations $(\ref{eq:prophat})$ and  $(\ref{eq:prop})$ reduce to 
$\varphi(\Lambda)({\mathcal A}_L^{(s)})^{-1}q^{-\ha\Delta}$
with ${\mathcal A}_L^{(s)}$ in $(\ref{eq:A_L})$ and $(\ref{eq:A_Lcom})$, 
 respectively.
\end{rmk}

\section{Mass truncation and finite dimensional $R$ matrix}
\label{sec:mass-truncation}

In this section we study the $\glN$ equation \eqref{eq:EigenEq}  
by imposing a truncation condition \eqref{mass-truncation} on mass parameters. 
We show that the normal ordered Hamiltonian \eqref{def:normal-ordered} gives rise to 
the finite dimensional $R$-matrix for the symmetric representation of $U_q(A_{N-1}^{(1)})$. 
It is quite remarkable that the resulting $R$ matrix is also related the three dimensional (tetrahedron) $R$ matrix \cite{Kuniba-book}. 
Namely, in the formula of the components of the three dimensional $R$ matrix there appears 
a basic building block $\Phi_q$ defined by \eqref{def:Phi_q} (see \cite{Kuniba-book}, \S 13.5). 
Exactly the same function is obtained in our formula of the components of the $R$ matrix (see Corollary \ref{Cor:R-component}). 
We note that the mechanism for deriving these formulas is common to both cases.

\subsection{The mass truncation}

We can recast the normal ordered form of the non-stationary $\glN$ equation as follows;
\begin{equation}
:\!\prod_{i=1}^N \frac{\varphi(\hat{x}_i)}{\varphi(d_i x_i)}\!:~q^{\frac{1}{2}\Delta}\cdot \mathsf{T} \psi
=~:\!\prod_{i=1}^N \frac{\varphi(\overline{d}_i x_i)}{\varphi(\check{x}_i)}\!:~q^{-\frac{1}{2} \Delta} \cdot \psi.
\end{equation}
Here the normal ordering $: \quad :$ is defined as
\begin{equation}
:F(x,\vartheta): x^\nu=F(x,\nu)x^\nu, 
\end{equation}
for any commutative function $F(x,\vartheta)=F(\{x_a\}, \{\vartheta_a\})$ and monomial $x^{\nu}=\prod_{a=1}^N x_a^{\nu_a}$.
By the $q$-binomial theorem 
\begin{equation}\label{q-binomial}
\sum_{n=0}^\infty \frac{(a;q)_n}{(q;q)_n} z^n = \frac{\varphi(az)}{\varphi(z)},
\end{equation}
we have
\begin{align}\label{normal-ordered-form}
& \sum_{\alpha_1, \ldots, \alpha_N \geq 0} : \prod_{i=1}^N (d_i x_i)^{\alpha_i}
\frac {(\overline{d}_i q^{\vartheta_i - \vartheta_{i-1}};q)_{\alpha_i}}{(q;q)_{\alpha_i}} :
q^{\frac{1}{2}\Delta}\cdot \mathsf{T} \psi \CR
& \qquad = \sum_{\alpha_1, \ldots, \alpha_N \geq 0} : \prod_{i=1}^N x_i^{\alpha_i}
 \frac {(\overline{d}_i q^{\vartheta_i - \vartheta_{i-1}};q)_{\alpha_i}}{(q;q)_{\alpha_i}
(q^{\vartheta_i - \vartheta_{i-1}})^{\alpha_{i}}} :
q^{-\frac{1}{2}\Delta}\cdot \psi.
\end{align}

Let us impose the mass truncation condition,
\begin{equation}\label{mass-truncation}
\overline{d}_i = q^{-m_i}, \quad m_i \in \bbZ_{\geq 0}, \qquad 1 \leq i \leq N.
\end{equation}
Set $M:= m_1 + m_2 + \cdots +m_N$. Note that $\overline{d}_1 \overline{d}_2 \cdots  \overline{d}_N =q^{-M}$.
Under the condition \eqref{mass-truncation}, the coefficient for $x_i^{\alpha_i}$ in \eqref{normal-ordered-form} 
vanishes for $\alpha_i \geq 1+m_i-\vartheta_i + \vartheta_{i-1}$. 
By using
\begin{equation}
\sum_{\alpha=0}^{n} \frac{(q^{-n};q)_\alpha}{(q;q)_\alpha} z^{\alpha} = (q^{-n}z;q)_n,
\end{equation}
we obtain
\begin{prp}\label{terminated-eq}
After the mass truncation \eqref{mass-truncation},  the non-stationary $\glN$ equation becomes\footnote{
$\mu_i := d_i$ are the remaining mass parameters after the mass truncation. When $N=2$ this should be compared with
eq.(2.12) in \cite{AHKOSSY2}.}
$$
: \prod_{i=1}^N (q^{-m_i + \vartheta_i - \vartheta_{i-1}} \mu_i  x_i;q)_{m_i -\vartheta_i + \vartheta_{i-1}} :~q^{\frac{1}{2}\Delta}\cdot \mathsf{T} \psi
=~: \prod_{i=1}^N (q^{-m_i} x_i;q)_{m_i - \vartheta_i + \vartheta_{i-1}} :~q^{-\frac{1}{2} \Delta} \cdot \psi,
$$
for the terminated function
\begin{equation}\label{terminated}
\psi = \sum_{\substack{\theta_1, \ldots, \theta_N \geq 0, \\ \theta_i - \theta_{i-1} \leq m_i}} 
c_{\theta_1, \ldots, \theta_N} x_1^{\theta_1} \cdots x_N^{\theta_N}.
\end{equation}
\end{prp}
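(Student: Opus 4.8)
The plan is to take the normal ordered form \eqref{normal-ordered-form} of the equation $\mathcal{H}^{\glN}\psi=\psi$ as the starting point and to impose the mass specialization \eqref{mass-truncation} directly under the normal ordering symbol; the rest is a termwise resummation. Recall that \eqref{normal-ordered-form} is obtained from the normal ordered Hamiltonian of Definition \ref{def:normal-ordered} (equivalent to the factorized forms by Section \ref{sec:2}) by expanding each ratio $\varphi(\hat{x}_i)/\varphi(d_ix_i)$ and $\varphi(\overline{d}_ix_i)/\varphi(\check{x}_i)$ with the $q$-binomial theorem $\varphi(az)/\varphi(z)=\sum_{n\ge0}\frac{(a;q)_n}{(q;q)_n}z^n$ applied termwise, which is legitimate because inside $:\,\cdot\,:$ the variables $x_i$ and the Euler operators $\vartheta_i$ behave as commuting indeterminates. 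So what has to be checked is that, after setting $\overline{d}_i=q^{-m_i}$, the two a priori infinite sums over $\alpha_1,\dots,\alpha_N$ in \eqref{normal-ordered-form} collapse to the finite Pochhammer products of the statement when applied to a function of the terminated form \eqref{terminated}.

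First I would note that $\overline{d}_i=q^{-m_i}$ turns the quantum Pochhammer $(\overline{d}_iq^{\vartheta_i-\vartheta_{i-1}};q)_{\alpha_i}$ into $(q^{-m_i+\vartheta_i-\vartheta_{i-1}};q)_{\alpha_i}$, which on a monomial $x^\nu$ (via $:F(x,\vartheta):x^\nu=F(x,\nu)x^\nu$) equals $(q^{-m_i+\nu_i-\nu_{i-1}};q)_{\alpha_i}$ and hence vanishes once $\alpha_i>m_i-\nu_i+\nu_{i-1}$, provided $m_i-\nu_i+\nu_{i-1}\ge0$. This nonnegativity is exactly what the terminated ansatz \eqref{terminated} guarantees: its monomials obey $\nu_i-\nu_{i-1}\le m_i$ for all $i$. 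Since $q^{\pm\frac12\Delta}$ and $\mathsf{T}^{\pm1}$ act on each monomial $x^\nu$ by a nonzero scalar and therefore do not enlarge the monomial support, the same bounds hold for $q^{\frac12\Delta}\mathsf{T}\psi$ on the left and for $q^{-\frac12\Delta}\psi$ on the right. Consequently, on every monomial occurring in these series the $\alpha_i$-summation runs over the finite set $0\le\alpha_i\le m_i-\nu_i+\nu_{i-1}$, and the operator-valued Pochhammer symbols $(\,\cdot\,;q)_{m_i-\vartheta_i+\vartheta_{i-1}}$ in the statement always have a well-defined nonnegative number of factors.

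The final step is the termwise resummation using the finite $q$-binomial identity $\sum_{\alpha=0}^{n}\frac{(q^{-n};q)_\alpha}{(q;q)_\alpha}z^\alpha=(q^{-n}z;q)_n$ quoted before the statement. On the left block of \eqref{normal-ordered-form} I apply it with $n=m_i-\vartheta_i+\vartheta_{i-1}$ and $z=d_ix_i$ (writing $\mu_i:=d_i$), getting $:\prod_{i=1}^N(q^{-m_i+\vartheta_i-\vartheta_{i-1}}\mu_ix_i;q)_{m_i-\vartheta_i+\vartheta_{i-1}}:$; on the right block I apply it with the same $n$ and $z=x_iq^{-\vartheta_i+\vartheta_{i-1}}$, so that the $q$-powers cancel against $q^{-n}$ and the sum collapses to $:\prod_{i=1}^N(q^{-m_i}x_i;q)_{m_i-\vartheta_i+\vartheta_{i-1}}:$. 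Assembling the two sides reproduces the asserted equation, and since every manipulation is an identity this proves that, on functions of the form \eqref{terminated}, the non-stationary $\glN$ equation is equivalent to the stated one.

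I do not expect a genuine difficulty here; the only delicate points are bookkeeping. One must (i) confirm that $q^{\pm\frac12\Delta}$ and $\mathsf{T}^{\pm1}$ preserve the monomial support, so that the truncation of the $\alpha_i$-sums is uniform in the acted-upon monomial and the Pochhammer lengths $m_i-\vartheta_i+\vartheta_{i-1}$ never go negative, and (ii) carry out the substitutions $z=\mu_ix_i$ and $z=x_iq^{-\vartheta_i+\vartheta_{i-1}}$ in the $q$-binomial identity consistently inside $:\,\cdot\,:$, where $x_i$ and $q^{\vartheta_i}$ may be freely reordered.
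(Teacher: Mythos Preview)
Your argument is correct and follows essentially the same route as the paper: start from the normal ordered form \eqref{normal-ordered-form}, observe that under $\overline{d}_i=q^{-m_i}$ the factor $(q^{-m_i+\vartheta_i-\vartheta_{i-1}};q)_{\alpha_i}$ terminates the $\alpha_i$-sums on monomials obeying the support condition in \eqref{terminated}, and then resum each finite sum via $\sum_{\alpha=0}^{n}\frac{(q^{-n};q)_\alpha}{(q;q)_\alpha}z^\alpha=(q^{-n}z;q)_n$ with $z=\mu_ix_i$ on the left and $z=x_iq^{-\vartheta_i+\vartheta_{i-1}}$ on the right. Your additional remarks that $q^{\pm\frac12\Delta}$ and $\mathsf{T}$ preserve the monomial support (so the Pochhammer lengths stay nonnegative) make explicit a point the paper leaves tacit.
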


Recall that we identify $\Lambda = x_1x_2 \cdots x_N$ as the parameter of the instanton expansion. 
If we fix $\theta_N \geq 0$ and regard it as the instanton number,
the number of terms in the terminated expansion \eqref{terminated} is finite.
They are labeled by the set $S(m) \subset \mathbb{Z}^{N-1}$ defined by
\begin{equation*}
S(m) := \{ (\theta_1, \ldots, \theta_{N-1}) \mid
\theta_1 \leq m_1, \theta_2 - \theta_1 \leq m_2, \ldots, \theta_{N-1}- \theta_{N-2} \leq  m_{N-1}, -\theta_{N-1} \leq m_N \}.
\end{equation*}
For example, when $N=3$,
$$
S(m) = \{ (\theta_1, \theta_2) \in \mathbb{Z}^2 \mid \theta_1 \leq m_1, \theta_2 - \theta_1 \leq m_2, - \theta_2 \leq m_3 \},
$$
and the allowed $(\theta_1, \theta_2)$ in $\mathbb{Z}^2$-lattice is bounded 
by the triangle (See Figure \ref{Fig:shifted triangle} in Appendix \ref{App:Combi}).
For general $N$, we have $\vert S(m) \vert = \binom{M+N-1}{N-1} = \binom{M+N-1}{M}$.

Let us make a shift $\widetilde{\theta}_i := \theta_i + m_{i+1}+ \cdots + m_N$ so that
the defining conditions for $S(m)$ can be written by
$0 \leq \widetilde{\theta}_{N-1} \leq \widetilde{\theta}_{N-2} \leq \cdots \leq
\widetilde{\theta}_2 \leq \widetilde{\theta}_1 \leq M$.
See also Appendix \ref{App:Combi} for the meaning of such a  shift.
Then define $i_1 := M -\widetilde{\theta}_1, i_2 := \widetilde{\theta}_1 - \widetilde{\theta}_2,
\cdots, i_{N-1}:= \widetilde{\theta}_{N-2} - \widetilde{\theta}_{N-1}, i_N := \widetilde{\theta}_{N-1}$,
then $\mathbf{i}=(i_1, i_2, \ldots, i_N)$ belongs to the set 
\begin{equation}\label{IM-def}
I_M :=\{ {\bf i}=(i_1,i_2, \ldots, i_N) \in \bbZ_{\geq 0}^N \mid
 i_1+i_2+\cdots +i_N=M \}.
\end{equation}
In fact we can define a bijection between $S(m)$ and $I_M$ as follows;
let us define $z_k := x_1 x_2 \cdots x_k,~k=1, \ldots, N-1$ and set
$$
z_1^{\alpha_1} z_2^{\alpha_2} \cdots z_{N-1}^{\alpha_{N-1}} = x_1^{\widetilde{\theta}_1}
\cdots x_{N-1}^{\widetilde{\theta}_{N-1}}.
$$
Then we have $\alpha_k = i_{k+1} ,k=1, \ldots, N-1$ and $\alpha_1 + \alpha_2 + \cdots +\alpha_{N-1} \leq M$. 
Introducing $\alpha_N := M - (\alpha_1 + \alpha_2 + \cdots +\alpha_{N-1}) = i_1$, we see that the elements of $S(m)$ are
in one to one correspondence with the monomials $z_1^{\alpha_1} \cdots z_{N-1}^{\alpha_{N-1}} z_N^{\alpha_N},~
z_N = \Lambda$ with homogeneous degree $M$. 

%


\subsection{$R$ matrix as a connection matrix}
\def\bfbeta{\boldsymbol\beta}
\def\bfgamma{\boldsymbol\gamma}
We study the finite dimensional matrix $R$ arising from the $\glN$ Hamiltonian by the mass truncation.
For $N, M \in \bbZ_{\geq 0}$ we have introduced $I^N_M = I_M$ in the last subsection (see \eqref{IM-def}).
Note that $|I_{M}|=\binom{N+M-1}{M}$.
For variables $\mathbf{z}=(z_1, z_2, \ldots, z_{N})$ and
parameters\footnote{These are the remaining mass parameters after the mass truncation.}
$(\mu_1, \ldots, \mu_N)$, we define the polynomials $B_{k, {\bf i}}$ ($k=1,2$) as
\begin{align}\label{eq:z-basis}
&B_{1,{\bf i}}(\mathbf{z},\Lambda)=\prod_{a=1}^N \left(\mu_a \frac{z_{a+1}}{z_a};q \right)_{i_a} z_a^{i_a}, \quad
B_{2,{\bf i}}(\mathbf{z},\Lambda)=\prod_{a=1}^N \left(\frac{z_{a}}{z_{a+1}};q \right)_{i_a} z_{a+1}^{i_a}. 
\end{align}
Here and in the followings, we always put  $z_{N+1}=\Lambda z_1$ regarding $\Lambda$ as a free parameter.\footnote{
We do not assume for example $\Lambda = x_1 x_2 \cdots x_N$.}
Note that we can change the normalization of the base polynomials \eqref{eq:z-basis} freely keeping the main 
structure of the matrix $R$. See the remark at the end of the section.
For generic $\Lambda$, both $\{B_{1,{\bf i}}(\mathbf{z},\Lambda) \mid {\bf i} \in I_M\}$ and  
$\{B_{2,{\bf i}}(\mathbf{z},\Lambda) \mid {\bf i} \in I_M\}$ form a basis of the homogeneous polynomials
of degree $M$ in $\bbC[z_1, \ldots, z_N]$.  Hence we have a relation
\begin{equation}\label{eq:BRB}
B_{1,{\bf i}}(\mathbf{z},\Lambda)=\sum_{{\bf j} \in I_M} R_{{\bf i},{\bf j}}(\Lambda) B_{2,{\bf j}}(\mathbf{z},\Lambda).
\end{equation}
The coefficients $R(\Lambda)_{{\bf i},{\bf j}}$ are polynomial in $\mu_a$ and rational in $\Lambda$ and $q$.

Since the size of $R$-matrix is $|I_M|$, one can determine $R_{{\bf i},{\bf j}}(\Lambda)$ by specializing
\eqref{eq:BRB} at $|I_M|$ points. 
It is convenient to choose such $|I_M|$ reference points 
$z_{{\bf k}}$ as follows;
\begin{equation}\label{eq:zjsp}
z_{{\bf k},1}=1, \quad z_{{\bf k},a}=q^{k_1+\cdots+k_{a-1}} \quad (1<a\leq N), 
\end{equation}
with ${\bf k} = (k_1, k_2, \ldots, k_N) \in I_M$.
We will solve the matrix equation
$B_{1,{\bf i}}(z_{{\bf k}},\Lambda)=\sum_{{\bf j} \in I_M} R_{{\bf i},{\bf j}}(\Lambda) B_{2,{\bf j}}(z_{{\bf k}}, \Lambda)$.
As we will see in Proposition \ref{prp:invB}, the inverse of the matrix  $\big(B_{2,{\bf i}}(z_{{\bf k}},\Lambda)\big)_{{\bf i},{\bf k}}$
is obtained explicitly, hence one can derive an explicit formulae of $R_{{\bf i},{\bf j}}(\Lambda)$.

To describe the inversion formulae, we prepare some notations. Let $n \geq 1$.
For any sequences of integers ${\bf i}=(i_1, i_2, \ldots, i_n),~{\bf j}=(j_1, j_2, \ldots, j_n)\in \bbZ^n$ 
of length $n$, we put
\begin{align}
|{\bf i}|=\sum_{a=1}^n i_a, \qquad
\overline{\bf i}=(i_1, \ldots, i_{n-1}), \qquad
\langle {\bf i}, {\bf j} \rangle=\sum_{1\leq a<b\leq n} i_a j_b.
\end{align}
For $\bfbeta, \bfgamma \in \bbZ^{n}$ and $\lambda, \mu \in \bbC$, we define\footnote{
See Eqs.(13.49) and (13.50) in \cite{Kuniba-book} (or Eqs.(19) and (20) in \cite{Kuniba:2018qzk}).} 
\begin{align}\label{def:Phi_q}
&\Phi_q({\bfgamma}|{\bfbeta}; \lambda, \mu)
=q^{\langle \bfbeta-\bfgamma, \bfgamma \rangle} \Big(\frac{\mu}{\lambda}\Big)^{|\bfgamma|}
\frac{(\lambda;q)_{|\bfgamma|}(\frac{\mu}{\lambda};q)_{|\bfbeta|-|\bfgamma|}}{(\mu;q)_{|\bfbeta|}}
\prod_{a=1}^{n} \left[{ \Frac{\beta_a}{\gamma_a} }\right]_q, 
\end{align}
where the $q$-binomial coefficients are define by \eqref{q-binom}.

Note that $\Phi_q({\bfgamma}|{\bfbeta}; \lambda, \mu)=0$ unless $\bfgamma \leq \bfbeta$ (i.e. $\forall a:  \gamma_a\leq \beta_a$).
The function $\Phi_q$  and  the function $A_{{\bf i},{\bf j}}^{{\bf a}, {\bf b}}$ defined below
 (see \eqref{eq:APhiPhi}), which is quadratic in $\Phi_q$,
originate in the study of the three dimensional $R$ matrix, where it was shown that the trace reduction of the three 
dimensional $R$ matrix gives fundamental examples of the quantum $R$ matrix of $U_q(A_{N-1}^{(1)})$ with higher \lq\lq spin\rq\rq\ representations.
See Chap.13 of \cite{Kuniba-book} and references therein.

\begin{prp}
For any ${\bf i}, {\bf k} \in \bbZ_{\geq 0}^n$ and $a,b,c \in \bbC$, the  function $\Phi_q$ satisfies
the transition property\footnote{
A similar formula 
$\sum_{{\bf j}} \Phi_q({\bf i}|{\bf j};b,c)\Phi_q({\bf j}|{\bf k};a,b)=\Phi_q({\bf i}|{\bf k};a,c)$ also seems to be true.}
\begin{equation}\label{eq:Phi-trans}
\sum_{{\bf i}\leq {\bf j}\leq {\bf k}} \Phi_q({\bf i}|{\bf j};a,b)\Phi_q({\bf j}|{\bf k};b,c)=\Phi_q({\bf i}|{\bf k};a,c).
\end{equation}
\end{prp}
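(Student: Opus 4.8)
The plan is to exploit the product structure of the weight in \eqref{def:Phi_q}: its right-hand side is the product of a \emph{scalar} factor $q^{\langle \bfbeta-\bfgamma,\bfgamma\rangle}(\mu/\lambda)^{|\bfgamma|}(\lambda;q)_{|\bfgamma|}(\mu/\lambda;q)_{|\bfbeta|-|\bfgamma|}/(\mu;q)_{|\bfbeta|}$, which sees $\bfgamma,\bfbeta$ only through $|\bfgamma|,|\bfbeta|$, and a \emph{combinatorial} factor $\prod_{a}\cb{\beta_a}{\gamma_a}{q}$ assembled from the individual components. First I would multiply $\Phi_q({\bf i}|{\bf j};a,b)\Phi_q({\bf j}|{\bf k};b,c)$ out. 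The Pochhammer $(b;q)_{|{\bf j}|}$ from the denominator of the first factor cancels the one in the numerator of the second, and --- after dividing through by the scalar prefactor of the intended answer $\Phi_q({\bf i}|{\bf k};a,c)$ and using the bilinearity of $\langle\,\cdot\,,\cdot\,\rangle$ --- the residual power of $q$ collapses to $q^{\langle {\bf k}-{\bf j},\,{\bf j}-{\bf i}\rangle}$. Next I would apply the componentwise ``trinomial revision'' $\cb{k_a}{j_a}{q}\cb{j_a}{i_a}{q}=\cb{k_a}{i_a}{q}\cb{k_a-i_a}{j_a-i_a}{q}$, which pulls $\prod_a\cb{k_a}{i_a}{q}$ --- exactly the combinatorial part of $\Phi_q({\bf i}|{\bf k};a,c)$ --- out of the sum over ${\bf j}$.

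Writing $\boldsymbol\alpha:={\bf j}-{\bf i}$, $\boldsymbol\delta:={\bf k}-{\bf j}$ and ${\bf m}:={\bf k}-{\bf i}=\boldsymbol\alpha+\boldsymbol\delta$, the constraint ${\bf i}\le{\bf j}\le{\bf k}$ becomes ${\bf 0}\le\boldsymbol\alpha\le{\bf m}$, and after the reductions above \eqref{eq:Phi-trans} is equivalent to the purely combinatorial identity
\[
\sum_{{\bf 0}\le\boldsymbol\alpha\le{\bf m}} q^{\langle {\bf m}-\boldsymbol\alpha,\,\boldsymbol\alpha\rangle}\Bigl(\tfrac cb\Bigr)^{|\boldsymbol\alpha|}\bigl(\tfrac ba;q\bigr)_{|\boldsymbol\alpha|}\bigl(\tfrac cb;q\bigr)_{|{\bf m}|-|\boldsymbol\alpha|}\prod_{a}\cb{m_a}{\alpha_a}{q}=\bigl(\tfrac ca;q\bigr)_{|{\bf m}|}.
\]
I would prove this in two stages. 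Grouping the left side by the value $j=|\boldsymbol\alpha|$, I claim the inner sum $\sum_{|\boldsymbol\alpha|=j,\ {\bf 0}\le\boldsymbol\alpha\le{\bf m}}q^{\langle {\bf m}-\boldsymbol\alpha,\,\boldsymbol\alpha\rangle}\prod_a\cb{m_a}{\alpha_a}{q}$ equals the single Gaussian binomial $\cb{|{\bf m}|}{j}{q}$; this follows by induction on the length $n$ from the $q$-Vandermonde--Chu identity $\cb{r+s}{j}{q}=\sum_{k}q^{(r-k)(j-k)}\cb{r}{k}{q}\cb{s}{j-k}{q}$, because the quadratic exponent factors as $\langle {\bf m}-\boldsymbol\alpha,\boldsymbol\alpha\rangle=\sum_{a=1}^{n-1}(m_a-\alpha_a)(\alpha_{a+1}+\cdots+\alpha_n)$, which is precisely the shape produced by peeling off one coordinate at a time. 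The remaining one-variable sum $\sum_{j=0}^{|{\bf m}|}\cb{|{\bf m}|}{j}{q}(c/b)^{j}(b/a;q)_j(c/b;q)_{|{\bf m}|-j}=(c/a;q)_{|{\bf m}|}$ is the identity $(xy;q)_m=\sum_{j}\cb{m}{j}{q}(x;q)_j(y;q)_{m-j}\,y^{j}$ with $x=b/a,\ y=c/b$; this in turn follows from the $q$-binomial (Cauchy) theorem $\sum_{m\ge 0}(x;q)_m z^m/(q;q)_m=(xz;q)_\infty/(z;q)_\infty$ by comparing coefficients of $z^m$ on the two sides of $(xyz;q)_\infty/(z;q)_\infty=\bigl((xyz;q)_\infty/(yz;q)_\infty\bigr)\bigl((yz;q)_\infty/(z;q)_\infty\bigr)$ (cf.\ \cite{GR}).

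I expect the only real obstacle to be bookkeeping rather than anything conceptual: one must track all the scalar Pochhammer prefactors and verify at each stage that the powers of $q$ match --- in particular the collapse to $q^{\langle {\bf k}-{\bf j},{\bf j}-{\bf i}\rangle}$ and then the compatibility of the splitting of $\langle {\bf m}-\boldsymbol\alpha,\boldsymbol\alpha\rangle$ with iterated $q$-Vandermonde. The variant noted in the footnote, $\sum_{{\bf j}}\Phi_q({\bf i}|{\bf j};b,c)\Phi_q({\bf j}|{\bf k};a,b)=\Phi_q({\bf i}|{\bf k};a,c)$, should come out of the same chain of manipulations with the two parameter ratios interchanged; alternatively one could bypass the induction altogether by identifying $\Phi_q$ with the vertex weight of \S13 of \cite{Kuniba-book} and invoking the composition law recorded there, but the self-contained route above is cleaner.
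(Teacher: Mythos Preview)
Your argument is correct. The route, however, is organized differently from the paper's. You first strip off all the scalar (Pochhammer) data and reduce the claim to a parameter-free $q$-multinomial identity $\sum_{{\bf 0}\le\boldsymbol\alpha\le{\bf m}} q^{\langle {\bf m}-\boldsymbol\alpha,\,\boldsymbol\alpha\rangle}(c/b)^{|\boldsymbol\alpha|}(b/a;q)_{|\boldsymbol\alpha|}(c/b;q)_{|{\bf m}|-|\boldsymbol\alpha|}\prod_a\cb{m_a}{\alpha_a}{q}=(c/a;q)_{|{\bf m}|}$, then prove that in two layers: an inner $q$-Vandermonde collapsing the multinomial to $\cb{|{\bf m}|}{j}{q}$, and a final one-variable convolution $(xy;q)_M=\sum_j\cb{M}{j}{q}y^j(x;q)_j(y;q)_{M-j}$. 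The paper instead keeps the full $\Phi_q$ intact and iterates on the coordinate length: setting $F_l=\Phi_q({\bf i}_l|{\bf j}_l;a,b)\Phi_q({\bf j}_l|{\bf k}_l;b,c)/\Phi_q({\bf i}_l|{\bf k}_l;a,c)$ for the truncations to the first $l$ components, it shows $F_l/F_{l-1}$ is a single $q$-binomial term in the variable $j_l$ (with $l$-dependent shifted parameters), so that $\sum_{j_l}F_l=F_{l-1}$ by the ordinary $q$-binomial formula, and the identity falls out as $\sum_{{\bf j}}F_n=F_0=1$. Your decomposition makes the underlying combinatorics more visible and isolates a clean parameter-free multinomial lemma; the paper's recursion is shorter to write and never needs the trinomial revision or the separate one-variable Pochhammer identity, doing both steps at once per coordinate.
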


\begin{proof}
Let ${\bf i}, {\bf j}, {\bf k} \in \bbZ_{\geq 0}^n$ and ${\bf i}_l, {\bf j}_l, {\bf k}_l$ 
be their truncations to the first $l$ components.  Assuming $\Phi_q({\bf i}_l|{\bf k}_l;a,c)\neq 0$, we put
\begin{equation}
F_l=\frac{ \Phi_q({\bf i}_l|{\bf j}_l;a,b)\Phi_q({\bf j}_l|{\bf k}_l;b,c)}{\Phi_q({\bf i}_l|{\bf k}_l;a,c)} \quad (l\geq 1),
\quad F_0=1.
\end{equation}
It is not difficult to see
\begin{align}
\frac{F_l}{F_{l-1}}=
&\frac{(u;q)_s(v;q)_{k-s}}{(q)_s(q)_{k-s}}v^s \frac{(q)_k}{(uv;q)_k} \quad (l\geq 1),
\end{align}
where
\begin{align}
&u=\frac{b}{a} q^{\alpha}, \quad v=\frac{c}{b} q^{\beta-\alpha}, 
\quad s=j_l-i_l, \quad k=k_l-j_l, \nonumber \\
&\alpha=\sum_{a=1}^{l-1}(j_a-i_a), \quad \beta=\sum_{a=1}^{l-1}(k_a-j_a).
\end{align}
Then, the $q$-binomial formula \eqref{q-binomial} implies 
\begin{align*}
\sum_{l=0}^{\infty}\sum_{s=0}^{k}\frac{(u)_s(v)_{k-s}}{(q)_s(q)_{k-s}}(vx)^s x^{k-s}
=\frac{(u v x;q)_\infty}{(vx;q)_{\infty}}\frac{(vx;q)_{\infty}}{(x;q)_{\infty}}
=\frac{(u v x;q)_\infty}{(x;q)_{\infty}}=\sum_{l=0}^{\infty} \frac{(uv;q)_k}{(q)_k} x^k.
\end{align*}
Comparing the coefficients of $x^l$,  we have
\begin{equation}
\sum_{j_l=i_l}^{k_l} \frac{F_l}{F_{l-1}}=1, \quad {\rm i.e.} \quad  
\sum_{j_l=i_l}^{k_l} F_l=F_{l-1} \quad (l \geq 1).
\end{equation}
By iterating this, the desired relation $\sum_{{\bf j}} F_n=F_0=1$ is obtained. 
\end{proof}

\begin{lem} 
The specializations $B_{1,{\bf i}}(z_{\bf j},\Lambda)$, $B_{2,{\bf i}}(z_{\bf j},\Lambda)$ are given as follows;
\begin{align}\label{eq:B1sp}
&B_{1,{\bf i}}(z_{\bf j},\Lambda)=q^{\langle {\bf c}, {\bf i} \rangle}\frac{(q^{-|{\bf c}|}\Lambda;q)_M}{(q;q)_M}
\prod_{a=1}^{N} (q;q)_{i_a} \cdot
\Phi_q(\overline{{\bf c}-{\bf i}-{\bf j}}|\overline{{\bf c}-{\bf j}}; q^{M-|{\bf c}|}\Lambda, q^{-|{\bf c}|}\Lambda),\\
\label{eq:B2sp}
&B_{2, {\bf i}}(z_{\bf j}, \Lambda)=N_{{\bf i}}(\Lambda)
\Phi_q(\overline{{\bf i}}|\overline{{\bf j}}; q^{-M},\Lambda^{-1}),
\quad
N_{{\bf i}}(\Lambda)=\frac{(\Lambda^{-1};q)_M \Lambda^M}{(q;q)_M}\prod_{a=1}^{N} (q;q)_{i_a}, 
\end{align}
where ${\bf i}, {\bf j} \in I_M$ and $\mu_a=q^{-c_a}$. 
\end{lem}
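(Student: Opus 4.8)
The plan is to prove both identities by direct substitution of the reference points \eqref{eq:zjsp} into the definitions \eqref{eq:z-basis}, followed by reorganizing the resulting finite product of $q$-shifted factorials into the shape \eqref{def:Phi_q} of $\Phi_q$. The starting observation is that at $z_{\bf j}$ every consecutive ratio occurring in \eqref{eq:z-basis} degenerates to a power of $q$: one has $z_{{\bf j},a+1}/z_{{\bf j},a}=q^{\,j_a}$ for $1\le a\le N-1$, while the wrap-around ratio is $z_{{\bf j},N+1}/z_{{\bf j},N}=\Lambda\,q^{\,j_N-M}$ because $|{\bf j}|=M$, and $z_{{\bf j},a}=q^{\,j_1+\cdots+j_{a-1}}$, so that $\prod_{a=1}^{N}z_{{\bf j},a}^{\,i_a}=q^{\langle{\bf j},{\bf i}\rangle}$ and $\prod_{a=1}^{N}z_{{\bf j},a+1}^{\,i_a}$ is an explicit power of $q$ times $\Lambda^{\,i_N}$. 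Hence $B_{1,{\bf i}}(z_{\bf j},\Lambda)$ and $B_{2,{\bf i}}(z_{\bf j},\Lambda)$ are finite products of $q$-shifted factorials with completely explicit arguments, and the task is to match them, factor by factor, against \eqref{def:Phi_q}.

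I would treat $B_{2,{\bf i}}$ first, since it is the specialization actually inverted in Proposition \ref{prp:invB}. After substitution its $a$-th factor for $1\le a\le N-1$ reads $(q^{-j_a};q)_{i_a}\,z_{{\bf j},a+1}^{\,i_a}$, and the reflection identity \eqref{q-factorial-inversion} together with \eqref{q-binom} rewrites $(q^{-j_a};q)_{i_a}=(-1)^{i_a}q^{-i_aj_a+\binom{i_a}{2}}(q;q)_{i_a}\cb{j_a}{i_a}{q}$; the $q$-binomials $\cb{j_a}{i_a}{q}$ are exactly those in $\Phi_q(\overline{\bf i}|\overline{\bf j};q^{-M},\Lambda^{-1})$, and both sides visibly vanish unless $i_a\le j_a$ for all $a<N$, which is precisely the support condition $\overline{\bf i}\le\overline{\bf j}$ of $\Phi_q$. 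The remaining data — the $a=N$ factor $(q^{\,M-j_N}\Lambda^{-1};q)_{i_N}$, the contribution $\Lambda^{i_N}$ and the accumulated powers of $q$ — must then reproduce $N_{\bf i}(\Lambda)$ times the $\Lambda$-dependent prefactor $(\mu/\lambda)^{|\bfgamma|}(\lambda;q)_{|\bfgamma|}(\mu/\lambda;q)_{|\bfbeta|-|\bfgamma|}/(\mu;q)_{|\bfbeta|}$ of $\Phi_q$ with $\lambda=q^{-M}$, $\mu=\Lambda^{-1}$; using $|\overline{\bf i}|=M-i_N$ and $|\overline{\bf j}|=M-j_N$ this boils down to the elementary splitting $(x;q)_{m+n}=(x;q)_m(xq^m;q)_n$ (in particular $(q;q)_M=(q;q)_{i_N}(q^{1+i_N};q)_{M-i_N}$) and one further use of \eqref{q-factorial-inversion}.

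The computation for $B_{1,{\bf i}}$ is entirely parallel but carries the remaining mass parameters $\mu_a=q^{-c_a}$. Now the $a$-th factor for $a<N$ becomes $(q^{\,j_a-c_a};q)_{i_a}\,z_{{\bf j},a}^{\,i_a}$, and \eqref{q-factorial-inversion} with \eqref{q-binom} gives $(q^{\,j_a-c_a};q)_{i_a}=(-1)^{i_a}q^{\,i_a(j_a-c_a)+\binom{i_a}{2}}(q;q)_{i_a}\cb{c_a-j_a}{i_a}{q}$, whose $q$-binomial equals $\cb{c_a-j_a}{c_a-i_a-j_a}{q}=\cb{\beta_a}{\gamma_a}{q}$ for the choice $\bfgamma=\overline{{\bf c}-{\bf i}-{\bf j}}$, $\bfbeta=\overline{{\bf c}-{\bf j}}$ (so $\bfbeta-\bfgamma=\overline{\bf i}$). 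The $a=N$ factor $(q^{\,j_N-c_N-M}\Lambda;q)_{i_N}$, the global factor $(q^{-|{\bf c}|}\Lambda;q)_M/(q;q)_M$ and $\prod_{a=1}^N(q;q)_{i_a}$ then have to be matched against the prefactor of $\Phi_q$ with $\lambda=q^{M-|{\bf c}|}\Lambda$, $\mu=q^{-|{\bf c}|}\Lambda$ (so $\mu/\lambda=q^{-M}$), where $|\bfgamma|=|{\bf c}|-c_N-2M+i_N+j_N$ and $|\bfbeta|=|{\bf c}|-c_N-M+j_N$; once again this is the splitting identity plus \eqref{q-factorial-inversion}. Finally one checks that the signs cancel — the $(-1)^{M-i_N}$ from the $N-1$ reflections matches the $(-1)^{M-i_N}$ produced on reflecting $(q^{-M};q)_{M-i_N}$ inside $\Phi_q$ — and that the accumulated $q$-powers, namely $\langle{\bf j},{\bf i}\rangle$ from $\prod z_{{\bf j},a}^{\,i_a}$, the $\sum_{a<N}\big(i_a(j_a-c_a)+\binom{i_a}{2}\big)$ from the reflections, and $q^{\langle\bfbeta-\bfgamma,\bfgamma\rangle}(\mu/\lambda)^{|\bfgamma|}$ together with the $\binom{M-i_N}{2}$-type terms from inside $\Phi_q$, collapse to exactly $q^{\langle{\bf c},{\bf i}\rangle}$.

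The only genuine difficulty is this last bookkeeping. Definition \eqref{def:Phi_q} compresses a product of $N$ Pochhammer factors into a shape built from $N-1$ $q$-binomials plus just three further $q$-shifted factorials, so the ``boundary'' pieces — the $a=N$ factor of $B_{1/2,{\bf i}}$ and the global $\Lambda$-factors $(q^{-|{\bf c}|}\Lambda;q)_M/(q;q)_M$ resp.\ $N_{\bf i}(\Lambda)$ — must be shuffled by repeated use of $(x;q)_{m+n}=(x;q)_m(xq^m;q)_n$ and \eqref{q-factorial-inversion} until they fit the $\Phi_q$-template, while every stray power of $q$ and every sign is tracked. This is lengthy but purely mechanical; there is no conceptual obstruction beyond the degeneration recorded in the first paragraph. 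As sanity checks one can verify both identities directly for $M=0$ (both sides equal $1$) and for $N=2$, where $\overline{\bf i}$ and $\overline{\bf j}$ are single components and $\Phi_q$ reduces to a single $q$-binomial times a ratio of Pochhammers, and one can confirm that the zero loci of the right-hand sides match those forced by the support condition of $\Phi_q$.
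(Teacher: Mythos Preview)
Your proposal is correct and is exactly the ``direct computation'' the paper's own proof invokes without details: substitute \eqref{eq:zjsp} into \eqref{eq:z-basis}, apply \eqref{q-factorial-inversion} to each factor $(q^{-j_a};q)_{i_a}$ (resp.\ $(q^{\,j_a-c_a};q)_{i_a}$) to extract the $q$-binomials, and reassemble using the splitting $(x;q)_{m+n}=(x;q)_m(xq^m;q)_n$. You have supplied considerably more detail than the paper does; the paper only adds the remark that for $B_{1,{\bf i}}$ the identity \eqref{eq:B1sp} continues to hold for generic $c_a\in\bbC$ via the analytically continued form of $\Phi_q(\bfgamma|\bfbeta;\mu q^M,\mu)$ displayed just after \eqref{eq:B2sp}.
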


\begin{proof}
A direct computation. 
Note that the expression \eqref{eq:B1sp} is valid also for $c_a \in \bbC$ where  $\Phi_q({\bfgamma}|{\bfbeta}; \lambda, \mu)$ with $\lambda=\mu q^M$ is expressed as
\begin{equation}
\Phi_q({\bfgamma}|{\bfbeta}; \mu q^M, \mu)
=q^{\langle \bfbeta-\bfgamma, \bfgamma \rangle-M{|\bfgamma|}}
\frac{(\mu q^{|\bfgamma|};q)_{M}}{(\mu;q)_M}\frac{(\frac{\mu}{\lambda};q)_{|\bfbeta|-|\bfgamma|}}{(\mu q^{|\bfgamma|};q)_{|\bfbeta|-|\bfgamma|}}
\prod_{a=1}^{n}  \frac{(q^{\gamma_a+1};q)_{\beta_a-\gamma_a}}{(q;q)_{\beta_a-\gamma_a}},
 \end{equation}
by analytical continuation. 
\end{proof}

From \eqref{eq:B2sp}, we see $B_{2, {\bf i}}(z_{\bf j}, \Lambda)=0$ unless $\overline{\bf i} \leq \overline{\bf j}$.
Moreover, the transition property \eqref{eq:Phi-trans} with $a=c$ implies;
\begin{prp} \label{prp:invB} Let $B_{{\bf i},{\bf j}}=B_{2, {\bf i}}(z_{\bf j}, \Lambda)$
and
\begin{equation}\label{eq:iB2sp}
B'_{{\bf i},{\bf j}}=
N_{{\bf j}}(\Lambda)^{-1} \Phi_q(\overline{{\bf i}}|\overline{{\bf j}}; \Lambda^{-1}, q^{-M}), 
\end{equation}
then  the matrix $(B'_{{\bf i},{\bf j}})$ gives the inverse of $(B_{{\bf i},{\bf j}})$.
\end{prp}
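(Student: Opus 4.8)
The plan is to prove directly that $(B'_{{\bf i},{\bf k}})$ is a left inverse of $(B_{{\bf k},{\bf j}})$, i.e.\ $\sum_{{\bf k}\in I_M} B'_{{\bf i},{\bf k}}\,B_{{\bf k},{\bf j}}=\delta_{{\bf i},{\bf j}}$; since both are square matrices of the finite size $|I_M|$, a one-sided inverse is automatically two-sided. Substituting $B'_{{\bf i},{\bf k}}=N_{{\bf k}}(\Lambda)^{-1}\Phi_q(\overline{{\bf i}}\,|\,\overline{{\bf k}};\Lambda^{-1},q^{-M})$ from \eqref{eq:iB2sp} and $B_{{\bf k},{\bf j}}=N_{{\bf k}}(\Lambda)\,\Phi_q(\overline{{\bf k}}\,|\,\overline{{\bf j}};q^{-M},\Lambda^{-1})$ from \eqref{eq:B2sp}, the normalization factors $N_{{\bf k}}(\Lambda)$ cancel termwise, leaving $\sum_{{\bf k}\in I_M}\Phi_q(\overline{{\bf i}}\,|\,\overline{{\bf k}};\Lambda^{-1},q^{-M})\,\Phi_q(\overline{{\bf k}}\,|\,\overline{{\bf j}};q^{-M},\Lambda^{-1})$.

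Next I would recognize this sum as an instance of the transition property \eqref{eq:Phi-trans}. The map ${\bf k}\mapsto\overline{{\bf k}}$ is a bijection from $I_M$ onto $\{\bfgamma\in\bbZ_{\geq 0}^{N-1}:|\bfgamma|\leq M\}$. Since $\Phi_q(\bfgamma|\bfbeta;\cdot,\cdot)=0$ unless $\bfgamma\leq\bfbeta$ (remarked right after \eqref{def:Phi_q}), every summand with $\overline{{\bf i}}\not\leq\overline{{\bf k}}$ or $\overline{{\bf k}}\not\leq\overline{{\bf j}}$ vanishes; the surviving $\overline{{\bf k}}$ automatically satisfy $|\overline{{\bf k}}|\leq|\overline{{\bf j}}|\leq M$, so the sum over ${\bf k}\in I_M$ is literally the sum $\sum_{\overline{{\bf i}}\leq\overline{{\bf k}}\leq\overline{{\bf j}}}$ in \eqref{eq:Phi-trans} (applied with $n=N-1$, $a=c=\Lambda^{-1}$, $b=q^{-M}$). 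It then yields $\Phi_q(\overline{{\bf i}}\,|\,\overline{{\bf j}};\Lambda^{-1},\Lambda^{-1})$.

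Finally I would compute $\Phi_q(\bfgamma|\bfbeta;\lambda,\lambda)$ from the definition \eqref{def:Phi_q}: the factor $(\mu/\lambda;q)_{|\bfbeta|-|\bfgamma|}$ degenerates to $(1;q)_{|\bfbeta|-|\bfgamma|}$, which vanishes unless $|\bfbeta|=|\bfgamma|$; combined with $\bfgamma\leq\bfbeta$ this forces $\bfgamma=\bfbeta$, whereupon $q^{\langle\bfbeta-\bfgamma,\bfgamma\rangle}=1$, the ratio $(\lambda;q)_{|\bfgamma|}/(\mu;q)_{|\bfbeta|}=1$, and each $q$-binomial is $1$. Hence $\Phi_q(\bfgamma|\bfbeta;\lambda,\lambda)=\delta_{\bfgamma,\bfbeta}$, so $\Phi_q(\overline{{\bf i}}\,|\,\overline{{\bf j}};\Lambda^{-1},\Lambda^{-1})=\delta_{\overline{{\bf i}},\overline{{\bf j}}}$, and since ${\bf i},{\bf j}\in I_M$ are determined by their truncations (via $i_N=M-|\overline{{\bf i}}|$), this equals $\delta_{{\bf i},{\bf j}}$, as required.

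I do not expect a real obstacle here: the proposition is essentially a corollary of the transition property established just above it. The two points that merit care are (i) justifying that the sum over $I_M$ may be identified with the constrained sum of \eqref{eq:Phi-trans} with no boundary discrepancy, which is precisely where the support condition $\bfgamma\leq\bfbeta$ for $\Phi_q$ is used, and (ii) the elementary identity $(1;q)_n=\delta_{n,0}$ for $n\geq 0$, which is the mechanism making $\Phi_q(\cdot\,|\,\cdot;\lambda,\lambda)$ the identity matrix. (The opposite product $\sum_{{\bf k}}B_{{\bf i},{\bf k}}B'_{{\bf k},{\bf j}}=\delta_{{\bf i},{\bf j}}$ follows by the same computation, using \eqref{eq:Phi-trans} with $a=c=q^{-M}$, $b=\Lambda^{-1}$, after pulling the ${\bf k}$-independent factor $N_{{\bf i}}(\Lambda)N_{{\bf j}}(\Lambda)^{-1}$ out of the sum.)
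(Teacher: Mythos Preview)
Your proposal is correct and follows exactly the approach the paper indicates: the proposition is deduced from the transition property \eqref{eq:Phi-trans} specialized to $a=c$, which you have carefully unpacked (including the identification of the $I_M$-sum with the constrained sum via the support condition on $\Phi_q$, and the verification that $\Phi_q(\bfgamma|\bfbeta;\lambda,\lambda)=\delta_{\bfgamma,\bfbeta}$). The paper leaves all of these details implicit, so your write-up is simply a fleshed-out version of the same argument.
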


Thanks to \eqref{eq:B1sp} and  the inversion formulae \eqref{eq:iB2sp}, we have
\begin{cor} \label{Cor:R-component} For ${\bf i}, {\bf j} \in I_M$
the coefficients $R_{{\bf i}, {\bf j}}(\Lambda)$ are given by
\begin{align}
&R_{{\bf i}, {\bf j}}(\Lambda)=C\frac{(\Lambda q^{-|{\bf c}|};q)_M}{(\Lambda q^{-M+1};q)_M}
\prod_{a=1}^N \frac{(q;q)_{i_a}}{(q;q)_{j_a}} A_{{\bf c}-{\bf j},{\bf j}}^{{\bf c}-{\bf i}, {\bf i}}, 
\qquad{\bf c}={\bf a}+{\bf b}={\bf i}+{\bf j},
\label{eq:RbyA}\\
&C=(-1)^M q^{\frac{M(|{\bf c}|-M+1)}{2} + \frac{1}{4}\sum_{a=1}^N \left(\ctwo{i_a}-\ctwo{j_a}\right)
+\frac{{\bf c}\cdot({\bf j}-2{\bf i})}{2}
+\frac{\langle {\bf j}-{\bf i},{\bf c} \rangle}{2}}, 
\qquad \mu_a=q^{-c_a}.
\end{align}
Here the function $A$ is given by\footnote{See eq.(13.51) in \cite{Kuniba-book}, where
$(q^2, z,l,m)_{\rm there} =(q,q^{-|{\bf c}|/2}\Lambda, |{\bf c}|-M,M)_{\rm here} $.}
\begin{align}\label{eq:APhiPhi}
&A_{{\bf i},{\bf j}}^{{\bf a}, {\bf b}}=q^{\frac{\langle{\bf i},{\bf j}\rangle-\langle{\bf b},{\bf a}\rangle}{2}}
\sum_{{\bf k} \in \bbZ^{N-1}}
\Phi_q \big({\bf k}|\overline{\bf j};\frac{1}{\Lambda },q^{-M}\big) 
\Phi_q \big(\overline{\bf a}-{\bf k}|\overline{\bf c}-{\bf k};\Lambda q^{M-|{\bf c}|},\Lambda  q^{-|{\bf c}|}\big).
\end{align}
 \end{cor}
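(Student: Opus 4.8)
The plan is to read off $R_{{\bf i},{\bf j}}(\Lambda)$ from the linear system \eqref{eq:BRB} by evaluating it at the $|I_M|$ reference points \eqref{eq:zjsp} and then inverting the coefficient matrix, whose inverse is already known. Specializing \eqref{eq:BRB} at $\mathbf{z}=z_{{\bf k}}$ and writing $B_{{\bf j},{\bf k}}:=B_{2,{\bf j}}(z_{{\bf k}},\Lambda)$ gives $B_{1,{\bf i}}(z_{{\bf k}},\Lambda)=\sum_{{\bf j}\in I_M}R_{{\bf i},{\bf j}}(\Lambda)\,B_{{\bf j},{\bf k}}$. Since $\{B_{1,{\bf i}}\}$ and $\{B_{2,{\bf j}}\}$ are bases for generic $\Lambda$, the matrix $(B_{{\bf j},{\bf k}})$ is invertible, and Proposition~\ref{prp:invB} supplies its inverse $(B'_{{\bf k},{\bf j}})$ in closed form. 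Hence
\[
R_{{\bf i},{\bf j}}(\Lambda)=\sum_{{\bf k}\in I_M} B_{1,{\bf i}}(z_{{\bf k}},\Lambda)\,B'_{{\bf k},{\bf j}},
\]
and it remains only to plug in the explicit evaluations \eqref{eq:B1sp} and \eqref{eq:iB2sp} and simplify.

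Next I would substitute \eqref{eq:B1sp} for $B_{1,{\bf i}}(z_{{\bf k}},\Lambda)$ and \eqref{eq:iB2sp} for $B'_{{\bf k},{\bf j}}$. Both scalar prefactors are independent of ${\bf k}$, so they come out of the sum; the $(q;q)_M$ cancels, and one is left with $q^{\langle{\bf c},{\bf i}\rangle}\,\dfrac{(q^{-|{\bf c}|}\Lambda;q)_M}{(\Lambda^{-1};q)_M\,\Lambda^{M}}\,\prod_{a=1}^{N}\dfrac{(q;q)_{i_a}}{(q;q)_{j_a}}$ times $\sum_{{\bf k}\in I_M}\Phi_q(\overline{{\bf c}-{\bf i}-{\bf k}}|\overline{{\bf c}-{\bf k}};q^{M-|{\bf c}|}\Lambda,q^{-|{\bf c}|}\Lambda)\,\Phi_q(\overline{{\bf k}}|\overline{{\bf j}};\Lambda^{-1},q^{-M})$. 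Since $\overline{(\cdot)}$ is linear and ${\bf k}\mapsto{\bf k}':=\overline{{\bf k}}$ is a bijection from $I_M$ onto $\{{\bf k}'\in\bbZ_{\geq0}^{N-1}:|{\bf k}'|\leq M\}$ — which is exactly the effective range of the sum in \eqref{eq:APhiPhi} — setting ${\bf a}:={\bf c}-{\bf i}$ gives $\overline{{\bf c}-{\bf i}-{\bf k}}=\overline{{\bf a}}-{\bf k}'$ and $\overline{{\bf c}-{\bf k}}=\overline{{\bf c}}-{\bf k}'$, so the sum is precisely the $\Phi_q$-bilinear sum defining $A_{{\bf c}-{\bf j},{\bf j}}^{{\bf c}-{\bf i},{\bf i}}$, save for the overall power $q^{(\langle{\bf c}-{\bf j},{\bf j}\rangle-\langle{\bf i},{\bf c}-{\bf i}\rangle)/2}$ that \eqref{eq:APhiPhi} pulls out in front. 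No fresh identity is needed at this step: the transition property \eqref{eq:Phi-trans} was already used, inside Proposition~\ref{prp:invB}.

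Finally I would reassemble the scalar factor. The $q$-Pochhammer ratio $\dfrac{(q^{-|{\bf c}|}\Lambda;q)_M}{(\Lambda^{-1};q)_M\,\Lambda^{M}}$ turns into $(-1)^M q^{-\ctwo M}\,\dfrac{(\Lambda q^{-|{\bf c}|};q)_M}{(\Lambda q^{-M+1};q)_M}$ after applying the reversal \eqref{q-factorial-inversion} to $(\Lambda^{-1};q)_M$; the factorials already give $\prod_a(q;q)_{i_a}/(q;q)_{j_a}$; and the remaining powers of $q$ — from $q^{\langle{\bf c},{\bf i}\rangle}$ in \eqref{eq:B1sp}, from the power split off from $A$ above, from the Pochhammer reversal, and from the chosen normalization of the base polynomials (cf.\ the remark following \eqref{eq:z-basis}, fixed to match the $3$D $R$-matrix conventions of \cite{Kuniba-book}) — combine into the exponent of $C$ as displayed, with its quadratic pieces $\ctwo{i_a}-\ctwo{j_a}$, ${\bf c}\cdot({\bf j}-2{\bf i})$ and $\langle{\bf j}-{\bf i},{\bf c}\rangle$. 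Because \eqref{eq:B1sp} holds for generic $\mu_a=q^{-c_a}$ by analytic continuation (as noted in the proof of the preceding Lemma, via the $\lambda=\mu q^{M}$ form of $\Phi_q$), the resulting identity is an equality of rational functions of $\Lambda$, $q$ and the $\mu_a$.

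The hard part will be this last bookkeeping: matching the normalizations of the two $\Phi_q$ factors to those in \eqref{eq:APhiPhi} exactly, pinning down the base-polynomial normalization, and checking that the accumulated exponent of $q$ equals the displayed one. Everything structural — the connection relation \eqref{eq:BRB}, the inversion formula of Proposition~\ref{prp:invB}, and the evaluation formulas \eqref{eq:B1sp}--\eqref{eq:B2sp} — is already in hand, so what is left is a long but routine power-of-$q$ computation rather than any new idea.
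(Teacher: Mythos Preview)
Your proposal is correct and follows precisely the route the paper indicates: invert $(B_{2,{\bf j}}(z_{{\bf k}},\Lambda))$ via Proposition~\ref{prp:invB}, multiply by the evaluated $B_{1,{\bf i}}(z_{{\bf k}},\Lambda)$ from \eqref{eq:B1sp}, and recognize the resulting $\Phi_q$-bilinear sum as $A_{{\bf c}-{\bf j},{\bf j}}^{{\bf c}-{\bf i},{\bf i}}$. The paper gives no more detail than ``Thanks to \eqref{eq:B1sp} and the inversion formulae \eqref{eq:iB2sp}, we have\ldots'', so your write-up in fact supplies more of the bookkeeping than the paper does.
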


In \cite{Kuniba-book} the same function $A_{{\bf i},{\bf j}}^{{\bf a}, {\bf b}}$ was introduced in the formula of 
a trace reduction of the tetrahedron (3D) $R$ matrix $R=R_{i,j,k}^{a,b,c}$;
\begin{equation}
R^{\rm{tr}_3}(z)_{{\bf i}, {\bf j}}^{{\bf a}, {\bf b}} :=
\sum_{c_1, \ldots, c_N \geq 0} z^{c_1} 
 R_{i_1,j_1,c_2}^{a_1,b_1,c_1}
R_{i_2,j_2,c_3}^{a_2,b_2,c_2}\cdots
R_{i_N,j_N,c_1}^{a_N,b_N,c_N},
\end{equation}
where ${\bf i}, {\bf j}, {\bf a}, {\bf b} \in I_{M}$. 
By the weight conservation ${\bf c}:={\bf i}+{\bf j}={\bf a}+{\bf b}$,
one can regard $R^{{\rm tr}_3}(z)_{{\bf i}, {\bf j}}^{{\bf a}, {\bf b}}$ as 
a function $R(z; {\bf c})_{\bf j,\bf b}$ of  ${\bf  j}, {\bf b} \in I_M$ and  ${\bf c} \in {\bbZ_{\geq 0}}^N$. 
The dependence on the parameter ${\bf c}$ seems to be analytically continued  to polynomials in $q^{-c_i}$.

The following two results  are relevant to our current problem.
\begin{thm}[\cite{Kuniba-book}, Theorem 13.3]\label{Kuniba-1}
For $\mathbf{a}, \mathbf{i} \in I_\ell$ and $\mathbf{b}, \mathbf{j} \in I_m$,\footnote{In the present case we take $\ell=m=M$. In \cite{Kuniba-book} 
 $I_\ell, I_m$ are denoted by $B_\ell, B_m$.} we have
$$
\Lambda_{\ell, m}(z,q)^{-1} R^{\rm{tr}_3}(z)_{\mathbf{i}, \mathbf{j}}^{\mathbf{a}, \mathbf{b}}
= \delta_{\mathbf{i}+ \mathbf{j}}^{\mathbf{a} + \mathbf{b}} A(z)_{\mathbf{i}, \mathbf{j}}^{\mathbf{a}, \mathbf{b}},
$$
where
$$
\Lambda_{\ell, m}(z,q) = (-1)^m q^{m(\ell +1)} \frac{(q^{-\ell-m}z;q^2)_m}{(q^{\ell-m}z;q^2)_{m+1}}.
$$
\end{thm}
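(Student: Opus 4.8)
This is Theorem~13.3 of \cite{Kuniba-book}; for completeness I indicate the method of proof. The plan is to insert the closed form of the tetrahedron $R$ matrix element into the trace reduction and then collapse the resulting multiple sum using the composition property \eqref{eq:Phi-trans} of $\Phi_q$. First I would recall from \cite{Kuniba-book}, \S 13.5, Eqs.~(13.49)--(13.50), the explicit expression for $R_{i,j,k}^{a,b,c}$: it vanishes unless the two conservation laws $a+b=i+j$ and $b+c=j+k$ hold, and on that locus it is a single-variable specialization of $\Phi_q$, equivalently a terminating $q$-hypergeometric sum. Substituting into $R^{\mathrm{tr}_3}(z)_{\mathbf i,\mathbf j}^{\mathbf a,\mathbf b}=\sum_{c_1,\dots,c_N\ge 0}z^{c_1}\prod_{\alpha=1}^{N}R_{i_\alpha,j_\alpha,c_{\alpha+1}}^{a_\alpha,b_\alpha,c_\alpha}$, the first conservation law holds in every factor, so componentwise it forces $\mathbf a+\mathbf b=\mathbf i+\mathbf j$; this produces the Kronecker factor $\delta_{\mathbf i+\mathbf j}^{\mathbf a+\mathbf b}$, and from now on one fixes $\mathbf c:=\mathbf i+\mathbf j=\mathbf a+\mathbf b$.

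The heart of the argument is the resummation of what remains. Since $R_{i,j,k}^{a,b,c}$ is itself a finite $q$-series, the cyclic product carries, besides the ``height'' variables $c_1,\dots,c_N$, a collection of internal summation indices; the second conservation law together with the cyclic structure of the trace lets one reorganize these so that only the single seam variable $c_1$ and one vector index $\mathbf k\in\bbZ^{N-1}$ remain free. The cyclic product of $\Phi_q$-factors is then telescoped: repeated use of the transition property \eqref{eq:Phi-trans} (and its order-reversed analogue noted just below it) collapses the intermediate sums into precisely the two factors $\Phi_q(\mathbf k|\overline{\mathbf j};\Lambda^{-1},q^{-M})$ and $\Phi_q(\overline{\mathbf a}-\mathbf k|\overline{\mathbf c}-\mathbf k;\Lambda q^{M-|\mathbf c|},\Lambda q^{-|\mathbf c|})$ that appear in $A_{\mathbf i,\mathbf j}^{\mathbf a,\mathbf b}$ of \eqref{eq:APhiPhi}, with the summation over $\mathbf k$ intact. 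Finally the $z^{c_1}$-weighted sum over the seam index $c_1$ is a classical $q$-series, of $q$-Gauss/Heine type, that evaluates in closed form, and one verifies it equals the scalar $\Lambda_{\ell,m}(z,q)=(-1)^m q^{m(\ell+1)}(q^{-\ell-m}z;q^2)_m/(q^{\ell-m}z;q^2)_{m+1}$; collecting the leftover powers of $q$ and $q$-shifted factorials then reproduces the prefactor $q^{(\langle\mathbf i,\mathbf j\rangle-\langle\mathbf b,\mathbf a\rangle)/2}$ in the definition of $A$.

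The step I expect to be the main obstacle is the bookkeeping of the $q$-power prefactors --- the quadratic exponents, the $\langle\cdot,\cdot\rangle$ pairings, and the $(\mu/\lambda)^{|\bfgamma|}$ factors hidden in each $\Phi_q$ --- showing that they assemble exactly into the ones occurring in $A$, and, closely related, verifying that the $z$-weighted seam sum genuinely detaches as the \emph{scalar} $\Lambda_{\ell,m}(z,q)$ rather than leaving residual $z$-dependence inside the $\mathbf k$-summation; the latter usually requires a careful choice of gauge for $R_{i,j,k}^{a,b,c}$ so that the seam index couples to everything else only through an overall factor. Apart from this, the only nontrivial inputs are the transition property of $\Phi_q$ (\eqref{eq:Phi-trans}, already established above by iterated $q$-binomial summation) and one standard $q$-series evaluation, so nothing beyond what is available in this section is needed.
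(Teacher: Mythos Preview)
The paper does not prove this statement at all: it is quoted as Theorem~13.3 of \cite{Kuniba-book} and used as a black box, with no argument supplied or even sketched. So there is no ``paper's own proof'' to compare your proposal against.

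That said, a brief comment on your outline. The broad shape --- substitute the explicit 3D $R$-matrix element, impose the two local conservation laws, and sum over the seam variable $c_1$ --- is indeed how the argument in \cite{Kuniba-book} proceeds. However, you overstate the role of the transition property \eqref{eq:Phi-trans}. In the present paper that identity is proved for a different purpose (inverting the matrix $B_{2,{\bf i}}(z_{\bf j},\Lambda)$ in Proposition~\ref{prp:invB}); it is not used, nor is it needed, to derive the cited theorem. In Kuniba's derivation the two $\Phi_q$-factors in $A_{\mathbf i,\mathbf j}^{\mathbf a,\mathbf b}$ arise directly from the product of the $N$ tetrahedron matrix elements once the chain of conservation laws $c_{\alpha}-c_{\alpha+1}=b_\alpha-j_\alpha$ has been used to eliminate $c_2,\dots,c_N$ in favour of $c_1$; no telescoping via \eqref{eq:Phi-trans} is required. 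The remaining sum over $c_1$ is then the geometric/$q$-series that yields the scalar $\Lambda_{\ell,m}(z,q)$. Your worry about residual $z$-dependence inside the $\mathbf k$-sum is well placed, and in the actual proof it is resolved precisely by this elimination step rather than by a gauge choice.
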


\begin{thm}[\cite{Kuniba-book}, Theorem 13.10]\label{Kuniba-2}
Up to normalization $R^{\rm{tr}_3}(z)$ coincides with the quantum $R$ matrix of 
$U_q(A_{N-1}^{(1)})$ as follows;
$$
R^{\rm{tr}_3}_{\ell, m} (z) = \mathcal{R}_{\ell\varpi_1, m \varpi_1}(z^{-1}),
$$
where $k \varpi_1$ stands for the $k$-th symmetric representation of $U_q(A_{N-1}^{(1)})$. 
\end{thm}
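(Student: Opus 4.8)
The plan is to exploit the explicit form already supplied by Theorem 13.3: there $R^{\mathrm{tr}_3}_{\ell,m}(z)$ is written entrywise as $\Lambda_{\ell,m}(z,q)$ times the matrix with entries $A(z)_{\mathbf i,\mathbf j}^{\mathbf a,\mathbf b}$ (supported on $\mathbf i+\mathbf j=\mathbf a+\mathbf b$), which is quadratic in the function $\Phi_q$. So proving the theorem reduces to identifying this concrete matrix with a normalization of the quantum affine $R$ matrix $\mathcal{R}_{\ell\varpi_1,m\varpi_1}$. I would establish this in three steps: (i) it solves the Yang--Baxter equation with spectral parameter; (ii) it commutes with the action of $U_q(A_{N-1}^{(1)})$ on $V_{\ell\varpi_1}(u)\otimes V_{m\varpi_1}(v)$ for the appropriate identification of $z$ with $v/u$ or its inverse; (iii) a rigidity argument then forces proportionality, and a single matrix element fixes the scalar.

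For step (i), the Yang--Baxter equation is a consequence of the tetrahedron equation $R_{123}R_{145}R_{246}R_{356}=R_{356}R_{246}R_{145}R_{123}$ satisfied by the 3D $R$ matrix $R\in\End(\calF^{\otimes3})$ on $\calF^{\otimes6}$, together with the concatenation-and-trace structure of $R^{\mathrm{tr}_3}$. Concretely, $R^{\mathrm{tr}_3}_{\ell,m}(z)$ is the weighted trace $\sum_{\mathbf c} z^{c_1}(\cdots)$ over the cyclic chain of $N$ copies of $R$ glued along their third slots; pushing the tetrahedron relation along the chain and using cyclicity of the trace to close it up yields the Yang--Baxter equation, with the spectral parameter entering through the trace weights. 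This is the standard reduction of 3D integrability to 2D and is essentially bookkeeping once the tetrahedron equation is granted.

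Step (ii) is the heart of the matter and the main obstacle. One must produce the $U_q(A_{N-1}^{(1)})$ action on $V_{\ell\varpi_1}(u)\otimes V_{m\varpi_1}(v)$ and check intertwining. On each tensor factor the symmetric representation $V_{k\varpi_1}$ is realized as the degree-$k$ subspace of $\calF^{\otimes N}$, with weight basis indexed by $I_k$; the finite Chevalley generators $e_i,f_i,t_i$ ($1\le i\le N-1$) act locally, and their commutation with $R^{\mathrm{tr}_3}$ follows from the fact that the single 3D $R$ matrix intertwines the relevant $A_q(\mathfrak{sl}_3)$-module structures, this property propagating unchanged along the chain. The new ingredient is the affine node $i=0$: here one uses that the \emph{cyclic} gluing turns the $A_{N-1}$ diagram into the affine $A_{N-1}^{(1)}$ diagram, and that the spectral parameter produced by the trace weight $z^{c_1}$ is precisely the one appearing in the coproduct of $e_0,f_0$. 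Making this precise --- writing down the explicit dictionary between the 3D boundary/trace data and the affinization of $U_q(\mathfrak{gl}_N)$, and verifying the $i=0$ relation --- is the technical core; it is exactly the content developed in Chapter 13 of \cite{Kuniba-book} and is not short.

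Finally, for step (iii), for generic $z$ the modules $V_{\ell\varpi_1}(u)$ and $V_{m\varpi_1}(v)$ are irreducible and their tensor product in either order is irreducible with cyclic highest-weight vector, so the space of $U_q(A_{N-1}^{(1)})$-intertwiners $V_{\ell\varpi_1}(u)\otimes V_{m\varpi_1}(v)\to V_{m\varpi_1}(v)\otimes V_{\ell\varpi_1}(u)$ is one-dimensional. Hence $R^{\mathrm{tr}_3}_{\ell,m}(z)$ is a scalar multiple of $\mathcal{R}_{\ell\varpi_1,m\varpi_1}(z^{-1})$; evaluating both sides on the highest-weight vector, where the relevant $\Phi_q$ factor collapses to $1$, identifies the normalization $\Lambda_{\ell,m}(z,q)$ and completes the proof.
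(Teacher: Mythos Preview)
The paper does not supply its own proof of this theorem: it is quoted verbatim as Theorem 13.10 of \cite{Kuniba-book} and used as an input, together with Theorem 13.3 and Corollary \ref{Cor:R-component}, to identify $R_{\mathbf i,\mathbf j}(\Lambda)$ with the quantum $R$ matrix of $U_q(A_{N-1}^{(1)})$. So there is nothing in the paper to compare your argument against.

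That said, your outline is the standard strategy and tracks the development in Chapter 13 of \cite{Kuniba-book}: derive the Yang--Baxter equation from the tetrahedron equation via the trace reduction, establish the $U_q(A_{N-1}^{(1)})$-intertwining property (with the affine node $i=0$ arising from the cyclic gluing and the trace weight $z^{c_1}$), and conclude by Schur-type rigidity for generic spectral parameter. You correctly flag that step (ii), especially the affine-node check, is the technical core and is not short; as written your proposal is an accurate roadmap rather than a self-contained proof, and you explicitly defer that part to \cite{Kuniba-book}. That is appropriate here, since the paper treats the result as a black box.
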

Combining Corollary \ref{Cor:R-component} and these two theorems,
we see that $R_{{\bf i}, {\bf j}}(\Lambda)$ is nothing but the $R$-matrix of $U_q(A_{N-1}^{(1)})$ 
 for the symmetric representations presented in \cite{Bosnjak:2016oze}.

 \begin{rmk} In a similar manner, one can compute  the relation between two $B_{2,{\bf i}}$ polynomials with
 different $\Lambda$ as 
 \begin{equation}
B_{2, {\bf i}}(\mathbf{z},\Lambda)= \sum_{{\bf j}\in I_M}
\frac{N_{{\bf i}}(\Lambda)}{N_{{\bf j}}(\Lambda')}\Phi_q(\overline{\bf i}|\overline{\bf j}; 
\frac{1}{\Lambda'},
\frac{1}{\Lambda}) B_{2,{\bf j}}(\mathbf{z}, \Lambda').
\end{equation} 
In view of this, the transition property \eqref{eq:Phi-trans} is obvious.
\end{rmk}

The coefficients $R_{{\bf i},{\bf j}}(\Lambda)$ explicitly given by \eqref{eq:RbyA} are to be related to
the Hamiltonian without the shift operator $\mathsf{T}$ of the mass truncated $\glN$ equation in Proposition \ref{terminated-eq}.
Actually they are related by a gauge transformation of the form $R \longrightarrow K^{-1} R L$, where $K$ and $L$ are
diagonal matrices, which corresponds to a change of the normalization of a basis of homogeneous
polynomials. The diagonal components of $K$ and $L$ come from the $q$-Borel transformation on monomials in $\mathbf{z}$
and the inversion of the $q$-factorial, hence they are of the form $K= \mathrm{diag.} (q^{\gamma_i})$ 
and $L= \mathrm{diag.} (q^{\delta_i})$, where $\gamma_i$ and $\delta_i$ are at most quadratic in the powers 
$\mathbf{i} \in I_M$ of $\mathbf{z}$.


\section{Affine Laumon partition function as Jackson integral}
\label{sec:affine-Laumon}

When we impose the mass truncation, the affine Laumon partition function is
represented as a Jackson integral, which was a key to the relation to the $q$-KZ equation. 
See \cite{Aganagic:2013tta} for the general method of representing Nekrasov partition functions 
as Jackson integrals with the help of the truncation of Young diagrams to finite length. 
In this section by recasting the affine Laumon partition function as a Jackson integral 
we will show that it is identified with an $N+2$ point correlation function, 
which provides a solution to the $q$-KZ equation of $U_q(\widehat{\mathfrak{sl}}_2)$ \cite{AMatsuo2}, \cite{VarchenkoCMP}. 
In the last section we have seen that after the mass parameter truncation 
the Hamiltonian of the non-stationary $\glN$ difference equation 
is related to the finite dimensional $R$-matrix of $U_q(\widehat{\mathfrak{sl}}_N)$ (See Corollary \ref{Cor:R-component} 
and Theorems \ref{Kuniba-1}, \ref{Kuniba-2}).
On the other hand what we will see in this section is the affine Laumon partition function, which is conjecture to 
be a solution to the non-stationary $\glN$ difference equation, actually gives a solution to the $q$-KZ equation of 
$U_q(\widehat{\mathfrak{sl}}_2)$ (See Proposition \ref{Laumon= Jackson} and \cite{ItoNoumi}, \cite{MIto}).
At the moment, unfortunately, we are not able to fill the gap between $U_q(\widehat{\mathfrak{sl}}_N)$ on the Hamiltonian side and
$U_q(\widehat{\mathfrak{sl}}_2)$ on the wave function side.
It is natural to expect a duality of $q$-KZ equations between $U_q(\widehat{\mathfrak{sl}}_2)$ and 
$U_q(\widehat{\mathfrak{sl}}_N)$. Towards a proof of our conjecture such a duality may play an important role. 

\subsection{Jackson integral of symmetric Selberg type}
The Jackson integral of symmetric Selberg type is\footnote{We have changed the notation $(n,m)  \to (M, N)$ from \cite{MIto}.}
\begin{align}\label{Jackson-Selberg}
\langle \phi, \xi \rangle &:= \int_{0}^{\xi \infty} 
\phi(z) \Phi_{M,N}(z) \Delta(z) \frac{d_tz_1}{z_1} \wedge \cdots \wedge \frac{d_tz_M}{z_M} \CR
&=(1-t)^M \sum_{\nu \in \bbZ^M} [ \phi(z)\Phi_{M,N}(z) \Delta(z)]_{z_i=\xi_i t^{\nu_i}},
\end{align}
where 
\begin{equation}\label{weight-MN}
\Phi_{M,N}(z) := \prod_{i=1}^{M} \left[ z_i^\alpha \prod_{r=1}^{N} \frac{(t \mathsf{a}_r^{-1}z_i;t)_\infty}{(\mathsf{b}_rz_i;t)_\infty} \right]
\prod_{1 \leq j < k \leq M} z_j^{2\tau -1} \frac{(tq^{-1} z_k/z_j;t)_\infty}{(q z_k/z_j; t)_\infty},
\end{equation}
with $q=t^{\tau}$, 
\begin{equation}
\Delta(z) := \prod_{1 \leq i < j \leq M} (z_i-z_j),
\end{equation}
and $\phi(z)$ is an arbitrary symmetric function on $(\mathbb{C}^\times)^{M}$ \cite{MIto}.
For later convenience we also define
\begin{equation}
\Delta(q,z) := \prod_{1 \leq i < j \leq M} (z_i- q^{-1} z_j), \qquad  \Delta(z) = \Delta(1,z).
\end{equation}
Here as in \cite{AHKOSSY2} we have exchanged $q$ and $t$, because
the shift parameter of the quantum Knizhnik-Zamolodchikov ($q$-KZ) equation of our interest is $t$, while it was $q$ in \cite{MIto}. 
In the context of two dimensional conformal field theory, $N$ is the number of the vertex operators in the correlation function 
and $(\mathsf{a}_r, \mathsf{b}_r)$ are evaluation parameters of each vertex operator of $U_q\bigl(A_1^{(1)}\bigr)$.
On the other hand, each integration variable $z_i$ is associated with the screening operator inserted in the correlation function.
Hence, the Jackson integral \eqref{Jackson-Selberg} corresponds to $N$-point correlation function (or $N+2$ point vacuum expectation value)
with the insertion of $M$ screening operators.

Aomoto and Kato \cite{Aomoto-Kato1},\cite{Aomoto-Kato2} showed that  the Jackson integral \eqref{Jackson-Selberg} satisfies
a $q$-difference system of rank $\binom{N+M-1}{N-1}$. 
In  \cite{MIto} the case $N=2$ was studied and the $R$ matrix appearing in the $q$-difference system of rank $M+1$ is 
identified with the $R$ matrix for the symmetric tensor representation of $U_q(A_1^{(1)})$ obtained in \cite{Bosnjak:2016oze} 
and \cite{Kuniba:2018qzk}. Later in subsection \ref{sec:Jackson-cocycle}, 
we will see that up to a pseudo constant the weight function $\Phi_{M,N}(z)$ arises from the affine Laumon partition function 
after the mass parameter truncation (See \eqref{weight-function}).
Moreover, the rank of a basis  (See \eqref{cocycle-base} and \eqref{symmetrization}) 
of the cocycle function $\phi(z)$ is exactly the same as that of the $q$-difference system of  Aomoto-Kato.


\subsection{Affine Laumon partition function}

The affine Laumon partition function of type $A_{N-1}^{(1)}$ is expressed 
as a summation over $N$-tuples of partitions $\vec{\lambda} = (\lambda^{(1)}, \ldots, \lambda^{(N)})$;
\begin{align}\label{glnAL}
&\mathcal{Z}_{\mathrm{AL}}^{\glN}
\left( \left.\left.\begin{array}{ccc}a_1,\ldots, a_N \\b_1, \ldots ,b_N\\c_1,\ldots, c_N\end{array} 
\right| \mathsf{x}_1, \ldots, \mathsf{x}_N \right|q, \kappa \right) \CR
&= \sum_{\vec{\lambda}}
\prod_{i,j=1}^N \frac{\Nk^{(j-i\vert N)}_{\emptyset,\lambda^{(j)}}(a_i/b_j|q, \kappa)
\Nk^{(j-i\vert N)}_{\lambda^{(i)},\emptyset}(b_i/c_j|q, \kappa)}
{\Nk^{(j-i\vert N)}_{\lambda^{(i)},\lambda^{(j)}}(b_i/b_j|q, \kappa)} \cdot \mathsf{x}_1^{k_1(\vec{\lambda})} \cdots \mathsf{x}_N^{k_N(\vec{\lambda})},
\end{align}
where $\mathsf{N}_{\lambda, \mu}^{(k\vert N)} (u \vert q, \kappa)$ is the orbifolded Nekrasov factor (see Definition \ref{def:affine-Laumon}). 
The powers of the expansion parameters $\mathsf{x}_i$ are given by the number of boxes with a fixed color (see \eqref{box-counting}).

\subsubsection{Exchange symmetry of the orbifolded Nekrasov factor}

The orbifolded Nekrasov factor $\mathsf{N}_{\lambda, \mu}^{(k\vert N)} (u \vert q,t)$ has the exchange symmetry. 
We employ the infinite product form of the orbifolded Nekrasov factor
obtained in Appendix F to \cite{AHKOSSY2}\footnote{Originally the orbifolded Nekrasov factor is defined
in terms of the finite $q$-shifted factorial $[u;q]_n$. But to see the relation to the Jackson integral it is
convenient to employ the $t=\kappa^{-\frac{1}{N}}$-shifted factorial $[u;t]_\infty$.}
\beq\label{inf-prod-form}
\mathsf{N}_{\lambda, \mu}^{(k\vert N)} (u \vert q, t^{-\frac{1}{N}})
= \prod_{i,j=1}^\infty
\frac{[uq^{j-i} t^{1 -\frac{k}{N} + \floor{\frac{\mu_{i}^\vee + k -\lambda_j^\vee}{N}}} ; t]_\infty}
{[uq^{j-i-1} t^{1 -\frac{k}{N} + \floor{\frac{\mu_{i}^\vee + k -\lambda_j^\vee}{N}}} ; t]_\infty}
\frac{[uq^{j-i-1} t^{1 -\frac{k}{N}}; t]_\infty} {[uq^{j-i} t^{1 -\frac{k}{N}}; t]_\infty},
\eeq
where $[u;t]_\infty$ is a regularized version of $[u;q]_n$ as $n \to \infty$, which is defined by
\beq\label{sinh-reg}
[u;t]_\infty := \frac{(u;t)_\infty}{\vartheta_{t^{1/2}}(- u^{1/2})}
= \frac{(u^{1/2}; t^{1/2})_\infty}{(-t^{1/2}u^{-1/2}; t^{1/2})_\infty}.
\eeq
with $\vartheta_p(z):= (z;p)_\infty (pz^{-1};p)_\infty$.
One can check
\begin{align}\label{sinh-ratio}
\frac{[u;q]_\infty}{[q^n u ; q]_\infty} &=
\frac{(u^{1/2} ; q^{1/2})_\infty}{(-q^{1/2} u^{-1/2}; q^{1/2})_\infty}
\frac{(-q^{(1-n)/2} u^{-1/2}; q^{1/2})_\infty}{(q^{n/2} u^{1/2} ; q^{1/2})_\infty}
\CR
&= (u^{1/2}; q^{1/2})_n (-q^{(1-n)/2} u^{-1/2}; q^{1/2})_n = [u; q]_n.
\end{align}

\begin{prp}
The orbifolded Nekrasov factor has the following symmetry;
\begin{equation}
\mathsf{N}_{\lambda, \mu}^{(k\vert N)} (u \vert q, \kappa) 
= \pm \mathsf{N}_{\mu, \lambda}^{(N-k-1\vert N)} (q\kappa/u \vert q, \kappa),
\qquad \kappa = t^{-1/N},
\end{equation}
where the sign factor depends on the pair $(\lambda, \mu)$. 
\end{prp}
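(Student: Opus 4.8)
The plan is to argue directly from the finite-product definition of the orbifolded Nekrasov factor in Definition~\ref{def:affine-Laumon}, using the elementary inversion identity for the balanced $q$-shifted factorial
\begin{equation*}
[u;q]_n=(-1)^n\,[q^{1-n}u^{-1};q]_n ,
\end{equation*}
which follows immediately from $[u;q]_n=u^{-n/2}q^{-n(n-1)/4}(u;q)_n$ and \eqref{q-factorial-inversion}. Recall that $\Nk^{(k\vert N)}_{\lambda,\mu}(u\vert q,\kappa)$ is a product of a ``$\lambda$-part'', over pairs $(i,j)$ with $j\geq i\geq 1$ and $j-i\equiv k$, with factors $[uq^{-\mu_i+\lambda_{j+1}}\kappa^{j-i};q]_{\lambda_j-\lambda_{j+1}}$, and a ``$\mu$-part'', over pairs $(\alpha,\beta)$ with $\beta\geq\alpha\geq 1$ and $\beta-\alpha\equiv -k-1$, with factors $[uq^{\lambda_\alpha-\mu_\beta}\kappa^{\alpha-\beta-1};q]_{\mu_\beta-\mu_{\beta+1}}$. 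Both are in fact finite products, since $\lambda_j=\lambda_{j+1}$ and $\mu_\beta=\mu_{\beta+1}$ for $j,\beta$ large.

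First I would write out $\Nk^{(N-k-1\vert N)}_{\mu,\lambda}(q\kappa/u\vert q,\kappa)$ by performing the substitution $\lambda\leftrightarrow\mu$, $k\mapsto N-k-1$, $u\mapsto q\kappa/u$ in this description, and note that modulo $N$ the two congruence classes get interchanged: the class $j-i\equiv k$ of the $\lambda$-part coincides with the class $\beta-\alpha\equiv -(N-k-1)-1$ of the $\mu$-part of the transformed factor, and conversely. Next I would apply the inversion identity to each factor of the $\lambda$-part; with $n=\lambda_j-\lambda_{j+1}$ one gets
\begin{equation*}
[uq^{-\mu_i+\lambda_{j+1}}\kappa^{j-i};q]_{\lambda_j-\lambda_{j+1}}
=(-1)^{\lambda_j-\lambda_{j+1}}\,[u^{-1}q^{1-\lambda_j+\mu_i}\kappa^{i-j};q]_{\lambda_j-\lambda_{j+1}} ,
\end{equation*}
and a direct check of the $q$-exponent ($1-\lambda_j+\mu_i$), the $\kappa$-exponent ($i-j$) and the length ($\lambda_j-\lambda_{j+1}$) shows that the right-hand side is precisely the $(\alpha,\beta)=(i,j)$ factor of the $\mu$-part of $\Nk^{(N-k-1\vert N)}_{\mu,\lambda}(q\kappa/u)$. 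The same manipulation sends the $\mu$-part of $\Nk^{(k\vert N)}_{\lambda,\mu}(u)$ to the $\lambda$-part of $\Nk^{(N-k-1\vert N)}_{\mu,\lambda}(q\kappa/u)$, each factor now carrying the sign $(-1)^{\mu_\beta-\mu_{\beta+1}}$.

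Collecting the two parts then yields $\Nk^{(k\vert N)}_{\lambda,\mu}(u)=(-1)^{S(\lambda,\mu)}\,\Nk^{(N-k-1\vert N)}_{\mu,\lambda}(q\kappa/u)$ with
\begin{equation*}
S(\lambda,\mu)=\sum_{\substack{j\geq i\geq 1\\ j-i\equiv k}}(\lambda_j-\lambda_{j+1})
+\sum_{\substack{\beta\geq\alpha\geq 1\\ \beta-\alpha\equiv -k-1}}(\mu_\beta-\mu_{\beta+1}),
\end{equation*}
a finite integer sum that depends only on $\lambda$, $\mu$ (and on $k$, $N$) and not on $u$, $q$ or $\kappa$; this is exactly what the statement means by ``the sign factor is fixed by $(\lambda,\mu)$''. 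One could in principle also run the same argument through the regularized infinite-product form \eqref{inf-prod-form} together with the inversion identity \eqref{infinite-inversion}, but there the overall sign appears only as the ``formal $(-1)^\infty$'' of that lemma and still has to be matched against the finite-product definition, so the route above is the cleaner one for extracting the sign explicitly.

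I expect the main obstacle to be the combinatorial bookkeeping of the index sets and the several off-by-one shifts: one must verify that the congruence $j-i\equiv k$ of the $\lambda$-part of $\Nk^{(k\vert N)}$ is literally the congruence $\beta-\alpha\equiv -(N-k-1)-1$ of the $\mu$-part of $\Nk^{(N-k-1\vert N)}$ (and the symmetric statement), that the shift $\lambda_{j+1}$ together with the $q^{\pm1}$, $\kappa^{\pm1}$ produced by the inversion land on the correct entries, and that the resulting factor-by-factor correspondence is an honest bijection between the two products with nothing lost or double-counted. Once this is in place, the proof is just the bookkeeping of the accumulated signs described above.
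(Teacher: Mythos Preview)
Your argument is correct and genuinely different from the paper's. The paper proves the symmetry by passing through the regularized infinite-product form \eqref{inf-prod-form}: it writes $\Nk^{(N-k-1\vert N)}_{\mu,\lambda}(q\kappa/u)$ in that form, applies the floor-function inversion \eqref{floor-inversion} to reorganize the $t$-exponents, and then uses Lemma~4.1 ($[u;t]_\infty\,[tu^{-1};t]_\infty=-1$) to flip each infinite factor, with the sign arising only as a formal $(-1)^\infty$. You instead stay with the finite-product definition from Definition~\ref{def:affine-Laumon} and use the elementary identity $[u;q]_n=(-1)^n[q^{1-n}u^{-1};q]_n$ factor by factor; the bijection between the $\lambda$-part of one side and the $\mu$-part of the other (and vice versa) that you set up does check out, including all the $q$- and $\kappa$-shifts. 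The advantage of your route is that it is self-contained (no regularization, no floor-function lemma) and yields the sign explicitly as the integer $S(\lambda,\mu)$ you wrote down; the paper's route has the advantage that the same infinite-product machinery is reused immediately afterward to recast the vector and matter contributions as a Jackson integral, so in context it is not a detour. Your closing remark that the infinite-product version leaves the sign implicit is exactly the situation in the paper, and indeed when the paper later needs the sign concretely (e.g.\ in \eqref{d-inversion}) it effectively reverts to a finite-product computation of the same type as yours.
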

\begin{proof}
By using \eqref{sinh-ratio} with $q$ replaced by $t$, we can recast \eqref{inf-prod-form} in the following form;
\beq
\mathsf{N}_{\lambda, \mu}^{(k\vert N)} (u \vert q, \kappa)
= \prod_{i,j=1}^\infty
\frac{[uq^{j-i-1} t^{1 -\frac{k}{N}}; t]_{n_{ij}}} {[uq^{j-i} t^{1 -\frac{k}{N}}; t]_{n_{ij}}},
\eeq
where we set $n_{ij}=\floor{\frac{\mu_{i}^\vee + k -\lambda_j^\vee}{N}} \in \mathbb{Z}$ and it is understood
\begin{equation}\label{n-inversion}
[u;t]_{-n} = [t^{-n} u;t]_n^{-1}, \qquad n >0.
\end{equation}
Similarly by combining \eqref{n-inversion} with the inversion formula of the floor function
\begin{equation}\label{floor-inversion}
1+ \floor{\frac{\ell}{N}} = - \floor{\frac{-\ell-1}{N}}, \qquad \ell \in \mathbb{Z},
\end{equation}
 we obtain
\begin{align}
\mathsf{N}_{\mu, \lambda}^{(N-k-1\vert N)} (q\kappa/u \vert q, \kappa)
= &~\prod_{i,j=1}^\infty
\frac{[u^{-1}q^{j-i} t^{\frac{k}{N}}; t]_{1+\floor{\frac{\lambda_{i}^\vee -k-1 -\mu_j^\vee}{N}}}}
{[u^{-1}q^{j-i+1} t^{\frac{k}{N}} ; t]_ {1+\floor{\frac{\lambda_{i}^\vee -k -1-\mu_j^\vee}{N}}}} \CR
= &~\prod_{i,j=1}^\infty
\frac{[u^{-1}q^{i-j+1} t^{\frac{k}{N}-n_{ij}} ; t]_ {n_{ij}}}
 { [u^{-1}q^{i-j} t^{\frac{k}{N}-n_{ij}} ; t]_ {n_{ij}}}.
 \label{inversion-sign}
\end{align}
Finally the formula
\beq\label{q-shifted-inversion}
(-1)^n [u^{-1},t]_n = [u; t^{-1}]_n = [u t^{-n+1}  ; t]_n, 
\eeq
implies the desired relation. 
Note that the right hand side of \eqref{inversion-sign} is actually a finite product
after the cancellation of the infinite product. In general the sign factors from the denominator 
and the numerator do not cancel and we need the sign factor. See Appendix \ref{App-C}
for an explicit formula of the sign factor from the data of  $(\lambda, \mu)$. 
\end{proof}


The exchange symmetry of the orbifolded Nekrasov factor implies that 
the matter contribution to the affine Laumon partition function is symmetric under $d_i \leftrightarrow \overline{d}_{i}$.
For matter contribution one of the partitions in the Nekrasov factor is empty and
we have general formulas of finite product form (see eqs. (F.29) and (F.30) in \cite{AHKOSSY2});
\begin{align}
\mathsf{N}_{\lambda, \varnothing}^{(k\vert N)} (u \vert q, \kappa)
&= \prod_{i \geq 1} [uq^{i-1}\kappa^{k}; \kappa^N]
_{\floor{\frac{\lambda_{i}^\vee + n-1 -k}{N}}},
\\
\mathsf{N}_{\varnothing, \lambda}^{(\ell\vert N)} (u \vert q, \kappa)
&= \prod_{i \geq 1}[uq^{-i}\kappa^{\ell -N\floor{\frac{\lambda_i^\vee + \ell }{N}}} ; \kappa^N]
_{\floor{\frac{\lambda_i^\vee + \ell }{N}}}.
\end{align}
We compute
\begin{align}\label{d-inversion}
\mathsf{N}_{\varnothing, \lambda}^{(\ell\vert N)} (\frac{q\kappa}{d} \vert q, \kappa)
&= \prod_{i \geq 1}[d^{-1}q^{1-i}\kappa^{1+ \ell -N\floor{\frac{\lambda_i^\vee + \ell }{N}}} ; \kappa^N]
_{\floor{\frac{\lambda_i^\vee + \ell }{N}}}.
\CR
&= \prod_{i \geq 1} (-1)^{\floor{\frac{\lambda_i^\vee + \ell }{N}}}
[d q^{i-1}\kappa^{-1- \ell +N\floor{\frac{\lambda_i^\vee + \ell }{N}}} ; \kappa^{-N}]
_{\floor{\frac{\lambda_i^\vee + \ell }{N}}}
\CR
&= \prod_{i \geq 1} (-1)^{\floor{\frac{\lambda_i^\vee + \ell }{N}}}
[d q^{i-1}\kappa^{N -1- \ell} ; \kappa^{N}]
_{\floor{\frac{\lambda_i^\vee + \ell }{N}}}
\CR
&= \prod_{i \geq 1} (-1)^{\floor{\frac{\lambda_i^\vee + \ell }{N}}} 
 \cdot \mathsf{N}_{\lambda, \varnothing}^{(N-1-\ell\vert N)} (d \vert q, \kappa),
\end{align}
where we have used \eqref{q-shifted-inversion}.

In general, if we take the specialization
\begin{equation}\label{mass-specialization}
a_i = \frac{q \kappa b_{i-1}}{d_{i-1}}, \qquad (0\equiv N), \qquad c_j = \frac{b_j}{\overline{d}_{j}},
\end{equation}
the anti-fundamental factor is 
\begin{align}
\mathsf{N}_{\varnothing, \lambda^{(j)}}^{(j-i\vert N)} (b_{i-1,j}\frac{q\kappa}{d_{i-1}} \vert q, \kappa)
&\sim \mathsf{N}_{\lambda^{(j)}, \varnothing}^{(N+(i-1)-j\vert N)} (b_{j,i-1}d_{i-1} \vert q, \kappa)
\CR
&= \mathsf{N}_{\lambda^{(j)}, \varnothing}^{(k-j\vert N)} (b_{jk}d_{k} \vert q, \kappa),
\end{align}
where $b_{ij} := b_i/b_j$. On the other hand the fundamental factor is
\begin{equation}
\mathsf{N}_{\lambda^{(i)}, \varnothing}^{(j-i\vert N)} (b_{ij}\overline{d}_{j} \vert q, \kappa).
\end{equation}
Hence, up to sign, they are the same under the exchange of mass parameters.

\subsubsection{Vector multiplet} 

Let us parametrize the lengths of the columns of an $N$-tuple of Young diagrams as follows;
\begin{equation}\label{column-length}
(\lambda^{(i)})^\vee= (\ell_1^{(i)}, \ell_2^{(i)}, \ldots), \qquad 1 \leq i \leq N.
\end{equation}
For $i \leq j$ the (inverse of) vector multiplet contribution is
\begin{equation}\label{vector-general}
\Nk^{(j-i \vert N)}_{\lambda^{(i)}, \lambda^{(j)}} (b_i/b_j \vert q, t^{-\frac{1}{N}}) = 
\prod_{k, m=1}^\infty
\frac{[(b_i/b_j) q^{m -k} t^{1 -\frac{j-i}{N} + \floor{\frac{\ell^{(j)}_{k} + j-i - \ell^{(i)}_{m})}{N}}} ; t]_\infty}
{[(b_i/b_j) q^{m-k-1} t^{1 -\frac{j-i}{N} + \floor{\frac{\ell^{(j)}_{k}+ j-i - \ell^{(i)}_m)}{N}}} ; t]_\infty},
\end{equation}
where we have deleted the normalization factor which is independent of $\ell_k^{(i)}$. 
When $i > j$, $j-i <0$ in Eq.\eqref{vector-general} should be replaced by $N+j-i$. But, since
\begin{equation}
 t^{1 -\frac{N+j-i}{N} + \floor{\frac{\ell^{(j)}_{k} +(n + j-i) - \ell^{(i)}_{m})}{N}}} 
 = 
 t^{1 -\frac{j-i}{N} + \floor{\frac{\ell^{(j)}_{k} + j-i - \ell^{(i)}_{m})}{N}}},
\end{equation}
we may use Eq.\eqref{vector-general} for any $1 \leq i,j \leq N$. 

Let us assume that the width of $\lambda^{(i)}$ is at most $L_i$ and set $L=L_1 + L_2 + \cdots + L_N$.
We introduce a disjoint decomposition of the index set $\{ 1,2, \cdots L\}$ by $S_1, S_2, \ldots, S_N$, where
\begin{equation}
S_i = \{ L_1 + \cdots + L_{i-1} +1, \ldots, L_1 + \cdots + L_{i} \}, \qquad \vert S_i \vert = L_i.
\end{equation}
When $I =  L_1 + \cdots + L_{i-1} + m \in S_i,~ 1 \leq m \leq L_i$, we denote $I=(i,m)$.
Then we define the variables 
\begin{equation}\label{Jackson-variables}
z_{(i,m)}= b_i^{-1} q^{1-m} \kappa^{i-1} t^{\floor{\frac{\ell^{(i)}_m +i-1}{N}}}, \qquad \kappa=t^{-1/N}.
\end{equation}
We can order $L$ variables $z_I$ in the lexicographic manner. 
In \cite{AHKOSSY2} in order to write down the weight function $W_{m+n}(z)$ in a symmetric way, 
we defined (see below eq.(4.18));
\begin{equation}\label{n=2}
z_{N_1 +j} = Q^{-1} q^{1-j} t^{\floor{\frac{k_j-1}{2}}}, \qquad k_j = (\lambda^{(2)})^\vee_j.
\end{equation}
When $N=2$ the definition \eqref{Jackson-variables} implies
\begin{equation}
z_{(1,i)} = b_1^{-1} q^{1-i} t^{\floor{\frac{\ell^{(1)}_i}{2}}},
\qquad
z_{(2,j)} = b_2^{-1} q^{1-j} \kappa t^{\floor{\frac{\ell^{(2)}_j+1}{2}}} 
= b_2^{-1} q^{1-j} \kappa^{-1} t^{\floor{\frac{\ell^{(2)}_j-1}{2}}}.
\end{equation}
After the scaling by $b_1$, this agrees with \eqref{n=2} with $Q=\kappa(b_2/b_1)$. 

Now we note\footnote{$[\ell]_N$ and $[m]_N$ denote the residue class modulo $N$. Not to be confused with the $q$-integer.}
\begin{align}\label{floor-difference}
\floor{\frac{\ell - m}{N}} &
= \floor{\frac{\ell}{N}} - \floor{\frac{m}{N}} - \theta([\ell]_N, [m]_N),
\end{align}
where for a pair $(r,s)$ of the residue classes modulo $N$ we define
\begin{align}
\theta(r,s) &=
\begin{cases}
1  \qquad   0 \leq r < s \leq N-1 \\
0 \qquad \hbox{otherwise}
\end{cases}.
\end{align}
By making use of the formula \eqref{floor-difference}
we can recast the (double product part of)\footnote{There is also a single product part coming from the boundaries of 
semi-infinite regions.} vector multiplet contribution as follows;
\begin{equation}
\Nk^{(j-i \vert N)}_{\lambda^{(i)}, \lambda^{(j)}} (b_i/b_j \vert q,  t^{-\frac{1}{N}}) = 
\prod_{I =(i,m) \in S_i} \prod_{J =(j,k)\in S_j}
\frac{[(tz_J/z_I)t^{- \theta(\ell^{(j)}_k + j-1, \ell^{(i)}_m + i-1)} ; t]_\infty}
{[(t z_J/q z_I)t^{- \theta(\ell^{(j)}_k + j-1, \ell^{(i)}_m + i-1)} ; t]_\infty},
\end{equation}
where for $i=j$, we should remove the factor with $m=k$. Hence
\begin{equation}
\prod_{i,j=1}^N \Nk^{(j-i \vert N)}_{\lambda^{(i)}, \lambda^{(j)}} (b_i/b_j \vert q, t^{-\frac{1}{N}}) =
\prod_{I \neq J =1}^L \frac{[(tz_J/z_I)t^{- \theta(\ell^{(j)}_k + j-1, \ell^{(i)}_m + i-1)} ; t]_\infty}
{[(t z_J/q z_I)t^{- \theta(\ell^{(j)}_k + j-1, \ell^{(i)}_m + i-1)} ; t]_\infty}.
\end{equation}
The additional factor $t^{- \theta(\ell^{(j)}_k + j-1, \ell^{(i)}_m + i-1)}$ is a generalization of 
$t^{\{(\ell_I)-1\}\cdot(\ell_J)}$ appearing in eq.(4.19) of \cite{AHKOSSY2},
where $(\ell_I):=[\ell_I]_2 $ is nothing but the parity of $\ell_I$.
%

For each pair $(i,j),~(1 \leq i,j \leq N)$, the boundary contribution of the mass truncation comes from
\begin{equation}
{\hbox{(I)}} \quad 1 \leq m \leq L_i,~L_j+1 \leq k < \infty \qquad
{\hbox{and}} \qquad
{\hbox{(II)}} \quad  L_i +1 \leq m < \infty,~1 \leq k \leq L_j,
\end{equation}
where we can assume either $\ell_k^{(j)}=0$ or $\ell_m^{(i)}=0$. 
With the same integration variables as above, we compute the boundary contribution as follows;
\begin{description}
\item{Case (I)}
\begin{equation}
\prod_{m=1}^{L_i} [ (b_i/b_j) q^{m-L_j-1} t^{1- \frac{j-i}{N} + \floor{\frac{j-i-\ell_m^{(i)}}{N}}};t]_\infty
= \prod_{m=1}^{L_i} [ b_j^{-1} z_{(i,m)}^{-1} q^{-L_j} \kappa^{j-1} t^{1 - \theta(j-1, \ell_m^{(i)} +i-1)}; t]_\infty.
\end{equation}
\item{Case (II)}
\begin{equation}
\prod_{k=1}^{L_j} \frac{1}{[ (b_i/b_j) q^{L_i-k} t^{1- \frac{j-i}{N} + \floor{\frac{\ell_k^{(j)} + j-i}{N}}};t]_\infty} 
= \prod_{k=1}^{L_j} \frac{1}{ [ b_i z_{(j,k)}q^{L_i-1} \kappa^{1-i} t^{1 - \theta(\ell_k^{(j)} +j -1, i-1)}; t]_\infty}.
\end{equation}
\end{description}
Taking the product of the inverses of all the boundary contributions, we obtain
\begin{equation}
\prod_{i=1}^N \prod_{m=1}^{L_i} \prod_{j=1}^N  \frac{[z_{(i,m)} b_j q^{L_j-1} \kappa^{1-j} t^{{1- \theta(\ell_m^{(i)} + i-1, j-1)}};t]_\infty}
{[z_{(i,m)}^{-1} b_j^{-1} q^{-L_j} \kappa^{j-1} t^{{1- \theta(j-1, \ell_m^{(i)} + i-1)}};t]_\infty}.
\end{equation}

\subsubsection{Matter multiplet}

Substituting the specialization \eqref{mass-specialization},
we obtain the matter multiplet contributions as follows;
\begin{enumerate}
\item Fundamental matter
\begin{align}
\Nk^{(j-i \vert N)}_{\lambda^{(i)}, \varnothing}
&=\prod_{m=1}^\infty [ b_{ij} \overline{d}_{j} q^{m-1} t^{1 - \frac{j-i}{N} + \floor{\frac{(j-1) - (\ell^{(i)}_m + i-1)}{N}}} ; t]_\infty \CR
&= \prod_{m=1}^\infty [b_j^{-1} \overline{d}_{j} z_{(i,m)}^{-1} 
\kappa^{j-1} t^{1 - \theta(j-1, \ell_m^{(i)} + i -1)}; t]_\infty,
\end{align}
\item Anti-fundamental matter
\begin{align}\label{anti-fund-gln}
\Nk^{(j-i \vert N)}_{\varnothing, \lambda^{(j)}}
&= \prod_{k=1}^\infty \frac{1}{[b_{i-1, j} d_{i-1}^{-1} q^{1-k} t^{1 - \frac{j-i+1}{N} + \floor{\frac{\ell^{(j)}_k}{N}+j -i}}; t]_\infty} \CR
&= \prod_{k=1}^\infty \frac{1}{[b_{i-1} d_{i-1}^{-1} z_{(j,k)} \kappa^{2-i} t^{1 - \theta(\ell_k^{(j)} + j-1, i-1)} ]_\infty},
\end{align}
\end{enumerate}
where the normalization factor is omitted in the same manner as \eqref{vector-general}.


\subsection{Mass parameter truncation and a basis of the cocycle}
\label{sec:Jackson-cocycle}


\begin{figure}
\ytableausetup{boxsize=1.5em}
\begin{align*}
&(\lambda^{(1)},\lambda^{(2)},\lambda^{(3)})= (~~~\ytableaushort{{1}{1}{1}{1}{1},{2}{2}{2},{0}{0},{1},{2}}~,
~\ytableaushort{{2}{2}{2}{2},{0}{0},{1}{1},{2}}~,~\ytableaushort{{0}{0},{1}{1},{2}}~)
\\
\end{align*}
\vspace{-10mm}
\caption{There are $5+4+2=11$ columns (see the bijection \eqref{bijection}). 
Corresponding shifted residues $\sres{\ell_I}_{I=1}^{11}$ are 
given by the number in the end box of each column, namely $(2,0,2,1,1;2,1,2,2;2,1)$. }
%
%
\label{Fig:shifted residue}
\end{figure}


Let us impose the mass truncation condition $\overline{d}_k = q^{-m_k},~m_k \in \mathbb{Z}_{\geq 0}$. 
Then by identifying $\ell_k$ in the previous section with $m_k$, we see that the fundamental matter contribution cancels half of 
the boundary part of the vector multiplet contribution. 
After the mass parameter truncation, the number of the columns of $\lambda^{(i)}$ is at most $m_i$, hence 
we have $M := m_1 + \cdots + m_N$ columns in total.
It is convenient to introduce the index $I$, $1 \leq I \leq M$
which  is identified with the multi-index $(i,m)$: the $m$-th column of 
the $i$-th Young diagram  through a bijection  
\begin{equation}\label{bijection}
\{1,2,\ldots, M\} \to\{(1,1),\ldots,(1,m_1), (2,1),\ldots,(2,m_2), \ldots, (N,1), \ldots,  (N,m_N)\}.
\end{equation}
Using this identification, we define the shifted residue $\sres{\ell_I}$ as the residue class of $\ell_m^{(i)} + i-1$ modulo $N$. 
The shifted residue $\sres{\ell_I}$ tells the color of the end box in the corresponding column; See Fig.\ref{Fig:shifted residue}.
Then the affine Laumon partition function is obtained from the following weight function;
\begin{equation}
W_M^{\glN}(z) = \prod_{I=1}^M \prod_{k=1}^N
\frac{[z_I b_k q^{m_k-1} \kappa^{1-k} t^{{1- \theta(\sres{\ell_I}, k-1)}} ;t]_\infty}
{[z_I  b_{k-1} d_{k-1}^{-1} \kappa^{2-k} t^{1 - \theta(\sres{\ell_I}, k-1)} ;t]_\infty}
\cdot \prod_{I \neq J =1}^M \frac
{[(t z_J/q z_I)t^{- \theta(\sres{\ell_J}, \sres{\ell_I})} ; t]_\infty}
{[(tz_J/z_I)t^{- \theta(\sres{\ell_J}, \sres{\ell_I})} ; t]_\infty},
\end{equation}
with the cycle of the Jackson integral that is chosen according to the corresponding lattice truncation. 
Note that the weight function $W_M^{\glN}(z)$ depends on the fixed point data $\{ \lambda^{(i)} \}$ 
only through the shifted residues $\sres{\ell_I}$.

When $N=2$, $\theta(X,Y)$ is non-vanishing only when $X$ is even and $Y$ is odd. 
Namely we have $\theta(X,Y)= (1-(X))\cdot(Y)$. Hence,
\begin{equation}
W_M^{\widehat{\mathfrak{gl}}_2}(z) 
= \prod_{I=1}^M \frac{[z_I q^{m-1} t ;t]_\infty}
{[z_I Q d_{0}^{-1} t ;t]_\infty}
\frac{[z_I Q q^{n-1} t^{1+\sres{\ell_I}} ;t]_\infty}
{[z_I d_{1}^{-1}t^{\sres{\ell_I}} ;t]_\infty} 
\cdot
\prod_{I \neq J =1}^M \frac
{[(t z_J/q z_I)t^{(\sres{\ell_J}-1)\cdot \sres{\ell_I})} ; t]_\infty}
{[(tz_J/z_I)t^{(\sres{\ell_J}-1)\cdot \sres{\ell_I})} ; t]_\infty},
\end{equation}
where we have substituted $m_1=m, m_2=n$ and $b_1=1, b_0=b_2 = Q/\kappa$. 
We see that this is exactly Eq.(4.19) in \cite{AHKOSSY2} with $d_0 \leftrightarrow d_4$.

For a matching with the formulas in \cite{MIto}, we define
\begin{equation}\label{Jackson-parameters}
\mathsf{a}_k := b_k^{-1} q^{1-m_k} \kappa^{k-1}, \qquad \mathsf{b}_k := b_{k} d_{k}^{-1} \kappa^{1-k},
\end{equation}
with $\mathsf{b}_N = t \mathsf{b}_0$. Note that $\mathsf{a}_k \mathsf{b}_k =q^{1-m_k}d_k^{-1}$.\footnote{Compare this with
Theorem 4.1 of  \cite{AHKOSSY2} . The original $q$-KZ equation is reproduced by considering the simultaneous 
$t$-shift $\mathsf{a}_k \to t \mathsf{a}_k, \mathsf{b}_k \to t^{-1}\mathsf{b}_k$, which is equivalent to the $t$-shift of the Coulomb moduli
$b_k \to t^{-1} b_k$. }
We obtain 
\begin{equation}
W_M^{\glN}(z) = \prod_{I=1}^M \prod_{k=1}^N
\frac{[\mathsf{a}_k^{-1} z_I t^{1- \theta(\sres{\ell_I}, k-1)} ;t]_\infty}
{[ \mathsf{b}_{k-1} z_I t^{1- \theta(\sres{\ell_I}, k-1)} ;t]_\infty}
 \cdot \prod_{I \neq J =1}^M \frac{[(t z_J/q z_I)t^{- \theta(\sres{\ell_J}, \sres{\ell_I})} ; t]_\infty}
{[(tz_J/z_I)t^{- \theta(\sres{\ell_J}, \sres{\ell_I})} ; t]_\infty},
\end{equation}
which should be compared with the weight function $\Phi_{M,N}(z)$ (see \eqref{weight-MN}  
and also Eq.(4.27) of \cite{AHKOSSY2} in the case of $N=2$).


We may note that the sum in the Jackson integral \eqref{Jackson-Selberg} is bilateral, namely it is taken over
$\{\nu_i \}\in \bbZ^M$. On the other hand, the affine Laumon partition function involves a sum over Young diagrams.
After the mass parameter truncation, the discrepancy is remedied by an appropriate choice of the cycle $\xi=(\xi_1, \dots, \xi_M)$.
In fact by taking the following cycle\footnote{Recall that we have exchanged $q$ and $t$, compared with the original literature.}
\begin{equation}\label{cycle-choice}
\xi=\xi_{m_i}
=\bigl(\underbrace{\mathsf{a}_1, \mathsf{a}_1 q, \dots, \mathsf{a}_1 q^{m_1-1}}_{m_1}, \ldots\ldots, 
\underbrace{\mathsf{a}_N, \mathsf{a}_N q, \dots, \mathsf{a}_N q^{m_N-1}}_{m_N}\bigr), 
\end{equation}
the lattice summation over $\bigl\{z_i=\xi_i t^{\nu_i}\bigr\}$ is truncated to a cone (\cite{ItoNoumi} section 3.2);
\begin{align}
&0\leq \nu_1\leq \nu_2 \leq \dots \leq \nu_{n-1} \leq \nu_{m_1}, \CR
&0\leq \nu_{m_1+1} \leq \nu_{m_1+2} \leq \dots \leq \nu_{n-1} \leq \nu_{m_1+m_2}, \CR
& \vdots \qquad \vdots \qquad \vdots \CR
&0 \leq \nu_{M- m_N+1} \leq \nu_{M -m_N +2}\leq \dots \leq \nu_{M-1} \leq \nu_{M}.
\end{align}


In order to identify the cocycle factor $\phi(z)$ in the Jackson integral  \eqref{Jackson-Selberg}, 
let us introduce a disjoint decomposition of $\{ 1,2, \ldots, M \} = R_0 \sqcup \cdots \sqcup R_{N-1}$ 
by $R_k := \{ I \vert \sres{\ell_I} =k \}$.
Then we can decompose the weight function as 
\begin{equation}
W_M^{\glN}(z) = W_L^{\glN, (0)}(z) \cdot P_{\{R_0 \sqcup \cdots \sqcup R_{N-1}\}}(z), 
\end{equation}
where
\begin{equation}\label{weight-function}
W_M^{\glN, (0)}(z) = \prod_{I=1}^M \prod_{k=1}^N
\frac{[t \mathsf{a}_k^{-1} z_I ;t]_\infty}
{[ \mathsf{b}_{k-1} z_I ;t]_\infty} \cdot
\prod_{I \neq J =1}^M \frac{[(t z_J/q z_I) ; t]_\infty}{[(tz_J/z_I); t]_\infty},
\end{equation}
and
\begin{align}
P_{\{R_0 \sqcup \cdots \sqcup R_{N-1}\}}(z) 
\sim& \left(\prod_{k=2}^N \prod_{\ell < k -1} \prod_{J \in R_\ell} (1- \mathsf{a}_k^{-1} z_J)\right) 
\left(\prod_{k=1}^{N-1} \prod_{k \leq \ell } \prod_{J \in R_\ell} (1- \mathsf{b}_{k} z_J) \right) \CR
& \quad \times \left(\prod_{0 \leq k < \ell \leq N-1} \prod_{I \in R_k} \prod_{J \in R_\ell} \frac{z_J - q^{-1}z_I}{z_J - z_I}\right).
\end{align}
By Lemma 3.1 in \cite{AHKOSSY2} the second factor of $W_M^{\glN, (0)}(z)$ is
\begin{equation}
\prod_{I \neq J =1}^M \frac{[(t z_J/q z_I) ; t]_\infty}{[(tz_J/z_I); t]_\infty}
= C(z) \Delta(z) \prod_{I=1}^M z_I^{-\tau(M-1)} \prod_{1 \leq I<J \leq M} z_I^{2\tau -1} \frac{[(t z_J/q z_I) ; t]_\infty}{[(qz_J/z_I); t]_\infty},
\end{equation}
where $\tau = \log_t q$ and $C(z)$ is a pseudo constant that is invariant under $z_I \to tz_I$ for each variable $z_I$.

The last factor of $P_{\{R_0 \cup \cdots \cup R_{N-1}\}}(z) $ is exactly such that we can apply Proposition G.1 in \cite{AHKOSSY2}.
To apply the proposition we recast the remaining factors as follows;
\begin{equation}\label{cocycle-base}
\prod_{\ell=0}^{N-1} \prod_{J \in R_{\ell}} f_\ell(z_J), \qquad 
f_{\ell}(z) := \prod_{\ell+1 < k} (1- \mathsf{a}_k^{-1} z) \cdot \prod_{k \leq \ell} (1- \mathsf{b}_{k} z).
\end{equation}
For example, when $N=3$
\begin{align*}
f_0(z) &= (1- \mathsf{a}_2^{-1}z)(1- \mathsf{a}_3^{-1}z), \\ 
f_1(z) &= (1- \mathsf{a}_3^{-1}z)(1- \mathsf{b}_1 z), \\ 
f_2(z) &= (1- \mathsf{b}_1 z)(1- \mathsf{b}_2 z).
\end{align*}

Now let consider all the partitions $R_0 \sqcup \cdots \sqcup R_{N-1}$ of $\{1, \ldots, M\}$ 
with fixed $|R_k| = r_k $. Then Proposition G.1 in \cite{AHKOSSY2} tells 
\begin{align}\label{symmetrization}
&\phi_{(r_0,r_1, \ldots, r_{N-1})}(z) 
:=\sum_{R_0 \sqcup \cdots \sqcup R_{N-1}} P_{\{R_0 \sqcup \cdots \sqcup R_{N-1}\}}(z) \CR
&= \prod_{k=0}^{N-1} \frac{1}{[r_k]_{q^{-1}}!} \cdot \Delta(z)^{-1}  \CR
& \qquad \times \mathcal{A} \left( \prod_{i_0=1}^{r_0} f_0(z_{i_0}) \prod_{i_1=1}^{r_1} f_1(z_{r_0 + i_1}) \cdots
\prod_{i_{N-1}=1}^{r_{N-1}} f_{N-1}(z_{r_0 + \cdots + r_{N-2}+  i_{N-1}})\Delta(q,z) \right).
\end{align}

In summary we have\footnote{Precisely speaking the Pochhammer type factors are used in the Jackson integral,
while we employ the sinh type factors in the definition of the affine Laumon function. They agree up to some monomial factor
(See Appendix \ref{App-C}).}
\begin{prp}\label{Laumon= Jackson}
By the identification of parameters \eqref{Jackson-parameters},
the affine Laumon partition function \eqref{glnAL} with the mass parameter truncation 
coincides with the Jackson integral \eqref{Jackson-Selberg} with the choice of the integration cycle \eqref{cycle-choice},
where the cocycle functions in \eqref{Jackson-Selberg} are given by \eqref{symmetrization}.
\end{prp}

The functions $\phi_{(r_0,r_1, \ldots, r_{N-1})}(z)$ are supposed to give a basis of the cocycle factors. 
In fact in \cite{ItoNoumi} the set $Z_{N,M}= \{ \mu = (\mu_1, \mu_2, \ldots, \mu_N) \in \mathbb{Z}^N_{\geq 0} \vert 
\mu_1 + \mu_2 + \cdots + \mu_N =M \}$ is introduced\footnote{We changed $s \to N$ and $n \to M$ from  \cite{ItoNoumi}.}
and the functions $E_\lambda(z)$ labeled by $\lambda \in Z_{N,M}$ are considered. 
We expect the functions $\phi_{(r_0,r_1, \ldots, r_{N-1})}(z)$ 
gives a basis of solutions to $t$-difference equation of  rank $r:= \vert Z_{N,M} \vert = \binom{N+M-1}{M}= \binom{N+M-1}{N-1}$. 
Note that the rank agrees with the size of $R_M$ block of the $R$ matrix computed in Section \ref{sec:mass-truncation}.
We can check the functions $\phi_{(r_0,r_1, \ldots, r_{N-1})}(z)$ coincide with the basis of the cocycle functions 
for $U_q(\widehat{\mathfrak{sl}}_2)$ $q$-KZ equation by Matsuo and Varchenko \cite{AMatsuo2}, \cite{VarchenkoCMP}. 

\begin{ex}
When $N=3$, up to the normalization factor we have
\begin{align}\label{N=3;cocycle}
\phi_{(r_0,r_1, r_2)}(z) 
&= \Delta(z)^{-1} 
\mathcal{A} \left( \prod_{i=1}^{r_0} (1- \mathsf{a}_2^{-1}z_i)
 \prod_{i=1}^{r_0+r_1}  (1- \mathsf{a}_3^{-1}z_i)  \right. \CR
 & \left.  \prod_{i=r_0+ 1}^{M}  (1- \mathsf{b}_1 z_i)  \prod_{i=r_0+r_1 + 1}^{M}(1- \mathsf{b}_2 z_i) \Delta(q,z) \right).
\end{align}
The cocycle function $\phi_{(r_0,r_1, r_2)}(z)$ corresponds to $z_1^{r_1} z_2^{r_2} = x_1^{r_1+r_2} x_2^{r_2}$ term
(modulo the power of $\Lambda=x_1x_2x_3$) in the expansion of the partition function (See Table \ref{M=3} 
in Appendix \ref{App:Combi}).
\end{ex}

In \cite{AHKOSSY2} we rely on the Jackson integral representation of the affine Laumon partition function
to prove that it is a solution to the non-stationary difference equation \eqref{Shakirov-eq}.
As we have shown in this section the Jackson integral representation of the affine Laumon partition function is also valid for $\glN$.
Unfortunately it only tells us the relation to the $q$-KZ equation of $U_q(\widehat{\mathfrak{sl}}_2)$. 
Namely, we can see the $\glN$ affine Laumon partition function corresponds to $N+2$ point correlation functions for
the $U_q(\widehat{\mathfrak{sl}}_2)$ $q$-KZ equation, where the shift operator acts on the mass parameters $d_i$,
not on the expansion parameters $x_i$. 
In section \ref{sec:mass-truncation}, we have seen that after imposing the mass truncation condition 
the non-stationary difference equation for $\glN$ is related to the $R$-matrix of $U_q(A_{N-1}^{(1)})$. 
It is natural to expect a duality of $q$-KZ equations between $U_q(\widehat{\mathfrak{sl}}_2)$ and 
$U_q(\widehat{\mathfrak{sl}}_N)$.  In fact when $N=2$ we have a dual pair of the $U_q(\widehat{\mathfrak{sl}}_2)$ 
KZ equations \cite{MIto},  \cite{AHKOSSY2}. Such a duality may be crucial for a proof of our conjecture.


\section{Four dimensional limit and Fuji-Suzuki-Tsuda system}
\label{sec:4d-limit}

The following computation uses almost the same method as that in Appendix C to \cite{AHKOSSY1}.

We start with the equation satisfied by the five dimensional $\widehat{\mathfrak{sl}}_N$ 
affine Laumon function $\psi_{5d}$ written in the normal ordered form
\begin{align}\label{eq:6.1}
&(A_1 -A_2) \cdot \psi_{5d}=0, \\
&A_1=~: \prod_{a=1}^N \frac{(q^{\vartheta'_a+{\tilde m}_a} x_a;q)_\infty}{(x_a;q)_\infty} q^{\gamma_a \vartheta_a}: ,\quad
A_2=~: \prod_{a=1}^N  \frac{(q^{{\tilde m}_a-m_a} x_a;q)_\infty}{(q^{-\vartheta'_a-m_a} x_a;q)_\infty} q^{-\vartheta_a \vartheta'_a}: .
\end{align}
To take the four dimensional limit, we have set the parameters as
\begin{equation}
d_a=q^{m_a}, \quad
\overline{d}_{a} =q^{\tilde{m}_a}, \quad
\kappa \frac{b_{a+1}}{b_a} =q^{\gamma_a}, \quad
\vartheta_a=x_a \frac{\partial}{\partial x_a}, \quad
\vartheta'_a=\vartheta_a-\vartheta_{a-1}, 
\end{equation} 
where $a \in \bbZ/ N \bbZ$.

For $q=e^h$, $h \to 0$, we have  
\begin{equation}\label{4dexpansion}
(q^a x;q)_b=(1-x)^{b} \left\{1+\frac{h x}{1-x}\left(a b +{ \binom{b}{2} }\right)+O(h^2)\right\}.
\end{equation}
Namely, by taking the limit of the $q$-binomial theorem (see \cite{AHKOSSY2} Eq.(5.2))
\begin{align}
\frac{(q^\alpha x;q)_\infty}{(x;q)_\infty} &= \sum_{k=1}^\infty \frac{(q^\alpha;q)_k}{(q;q)_k} x^k
=(1-x)^{-\alpha}\Big\{1+\frac{h}{2}\alpha(\alpha-1)\frac{x}{1-x}+O(h^2)\Big\}, \\
\frac{(-q^\alpha x;q)_\infty}{(-q^\beta x;q)_\infty}
&=(1+x)^{\beta-\alpha}\Big\{1+\frac{h}{2}(\alpha-\beta)(\alpha+\beta-1)\frac{x}{1-x}+O(h^2)\Big\}.
\end{align}

By using \eqref{4dexpansion}, the operators $A_1, A_2$ are expanded as
\begin{align}
A_1&=~: \prod_{a=1}^N (1-x_a)^{-\vartheta'_a-{\tilde m}_a}   \left\{1+h \sum _{a=1}^N K_{1,a}+O(h^2)\right\}: ,\\
A_2&=~: \prod_{a=1}^N (1-x_a)^{-\vartheta'_a-{\tilde m}_a}  \left\{1+h \sum_{a=1}^N K_{2,a} +O(h^2)\right\}: ,
\end{align}
where 
\begin{align}
&K_{1,a}=\gamma_a \vartheta_a+\frac{x_a}{1-x_a} \binom{\vartheta' _a+{\tilde m}_a}{2}, \\
& K_{2,a}=
-\vartheta_a\vartheta'_a-\frac{x_a}{1-x_a} \left(({\tilde m}_a-m_a) (-\vartheta' _a-{\tilde m}_a)+\binom{-\vartheta'_a-{\tilde m}_a}{2} \right).
\end{align}
Then, since $ \binom{a}{2}+\binom{-a}{2}=a^2 $, we have
\begin{align}\label{eq:K12-lim}
&K_1-K_2=\vartheta_a(\vartheta'_a+\gamma_a)+\frac{x_a}{1-x_a}(\vartheta'_a+m_a)(\vartheta'_a+\tilde{m}_a).
\end{align}

Define an operator $R$ acting only on $x$-variables (not on $\vartheta$'s) as
\begin{equation}
R: x_a \mapsto x_a \frac{U_{a+1}}{U_{a}}, \quad a \in \bbZ/(N \bbZ)
\end{equation}
\begin{align}
&U_a=\sum_{i=0}^{N-1}\prod_{j=0}^{i-1}x_{a+j}=\underbrace{x_a+x_ax_{a+1}+\cdots+\Lambda}_{N}, \\
&\Lambda=x_a x_{a+1}\cdots x_n x_1 \cdots x_{a-1}=x_1x_2\cdots x_N.
\end{align}
The following relation is useful.
\begin{align}
U_a-x_a U_{a+1}&=(x_a+x_ax_{a+1}+\cdots+\Lambda) 
-x_a(x_{a+1}+x_{a+1}x_{a+2}+\cdots+\Lambda) \nonumber \\
&=x_a(1-\Lambda).
\end{align}

\begin{lem}\label{Lemma:CR}
For any function $F(x,\vartheta)$ we have the following operator identity
\begin{equation}
C^{-1} R: \prod_{a=1}^N (1-x_a)^{-\vartheta'_a} F(x,\vartheta): = : F(C^{-1} R(x),\vartheta):,
\end{equation}
where $C$ acts only on $x$-variables (not on $\vartheta$'s) as $x_a \mapsto x_{a+1}$, $ a \in \bbZ/(N \bbZ)$.
\end{lem}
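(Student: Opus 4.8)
The plan is to verify the operator identity monomial by monomial: both sides are continuous linear operators on $\bbC[[x_1,\dots,x_N]]$, so it suffices to check that they agree on $x^\nu=\prod_{a=1}^N x_a^{\nu_a}$ for every $\nu\in\bbZ_{\geq0}^N$. First, by the definition of the normal ordering,
\begin{equation*}
:\!\prod_{a=1}^N (1-x_a)^{-\vartheta'_a}\,F(x,\vartheta)\!: x^\nu
= \Big(\prod_{a=1}^N (1-x_a)^{-(\nu_a-\nu_{a-1})}\Big)\,F(x,\nu)\,x^\nu ,
\end{equation*}
a well-defined formal power series since each $1-x_a$ is invertible. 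Then I would apply the substitution operator $C^{-1}R$: it acts only on the $x$-variables and leaves the numerical vector $\nu$ alone, and a direct computation of the composite substitution (using that $R$ commutes with the cyclic shift $C$) gives $(C^{-1}R(x))_a = x_{a-1}\,U_a/U_{a-1} =: \tilde x_a$. Hence the left-hand side applied to $x^\nu$ equals
\begin{equation*}
\Big(\prod_{a=1}^N (1-\tilde x_a)^{-(\nu_a-\nu_{a-1})}\Big)\,F(\tilde x,\nu)\,\tilde x^\nu .
\end{equation*}
Since $:\!F(C^{-1}R(x),\vartheta)\!: x^\nu = F(\tilde x,\nu)\,x^\nu$, the lemma is reduced to the scalar identity $\prod_{a=1}^N (1-\tilde x_a)^{-(\nu_a-\nu_{a-1})}\,\tilde x^\nu = x^\nu$ for all $\nu$.

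The crux is this last identity. I would prove it by collecting, for each index $a$, the total power of $\nu_a$: the factor $1-\tilde x_a$ carries $-\nu_a$ and $1-\tilde x_{a+1}$ carries $+\nu_a$, so
\begin{equation*}
\prod_{a=1}^N (1-\tilde x_a)^{-(\nu_a-\nu_{a-1})}\,\tilde x^\nu
= \prod_{a=1}^N \Big(\tilde x_a\,\frac{1-\tilde x_{a+1}}{1-\tilde x_a}\Big)^{\nu_a},
\end{equation*}
and it therefore suffices to show $\tilde x_a(1-\tilde x_{a+1}) = x_a(1-\tilde x_a)$ for every $a$. Substituting $\tilde x_a=x_{a-1}U_a/U_{a-1}$ and $\tilde x_{a+1}=x_aU_{a+1}/U_a$ and clearing the denominator $U_{a-1}U_a$, both sides reduce to $x_{a-1}x_a(1-\Lambda)/U_{a-1}$: the left side by the relation $U_a-x_aU_{a+1}=x_a(1-\Lambda)$ displayed just before the lemma, the right side by its cyclic shift $U_{a-1}-x_{a-1}U_a=x_{a-1}(1-\Lambda)$. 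Dividing by the invertible series $1-\tilde x_a$ then yields $\tilde x_a(1-\tilde x_{a+1})/(1-\tilde x_a)=x_a$, as required.

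Combining these steps, the left-hand operator applied to $x^\nu$ gives $F(\tilde x,\nu)\,x^\nu = \,:\!F(C^{-1}R(x),\vartheta)\!: x^\nu$, and since $\nu$ is arbitrary the two operators coincide. The only computational content I anticipate is the bookkeeping that converts the $\vartheta'$-telescoping into the product of the $\big(\tilde x_a(1-\tilde x_{a+1})/(1-\tilde x_a)\big)^{\nu_a}$ together with the one-line rational identity $\tilde x_a(1-\tilde x_{a+1})=x_a(1-\tilde x_a)$; neither is a real obstacle once $U_a-x_aU_{a+1}=x_a(1-\Lambda)$ is available, so the main thing to watch is keeping the cyclic indices consistent (in particular $x_0=x_N$ and $U_0=U_N$).
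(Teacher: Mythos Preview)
Your proof is correct and follows essentially the same route as the paper: both arguments act on monomials $x^\nu$, telescope the cyclic product $\prod_a(1-x_a)^{\nu_{a-1}-\nu_a}$ into a product of ratios, and reduce everything to the single relation $U_a-x_aU_{a+1}=x_a(1-\Lambda)$. The only cosmetic difference is ordering: the paper telescopes first and then applies $R$ and $C^{-1}$ sequentially to show $R\big(\tfrac{1-x_{a+1}}{1-x_a}x_a\big)=x_{a+1}$, whereas you first push the whole substitution through to $\tilde x=C^{-1}R(x)$ and then verify $\tilde x_a\,(1-\tilde x_{a+1})/(1-\tilde x_a)=x_a$; these are the same identity up to the shift $C$.
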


\begin{proof}
The action of the left hand side (LHS) on a monomial $x^{\nu}=\prod_{a=1}^N {x_{a}}^{\nu_a}$ is computed as
\begin{align}
\hbox{(LHS)} x^\nu&=C^{-1} R \left(\prod_{a=1}^N (1-x_a)^{-\nu_a+\nu_{a-1}} F(x,\nu)x^\nu \right) \nonumber \\
&=C^{-1} R \left(\prod_{a=1}^N \Big(\frac{1-x_{a+1}}{1-x_a}\Big)^{\nu_a} F(x,\nu)x^\nu \right) \nonumber \\
&=C^{-1} \prod_{a=1}^N R \left(\frac{1-x_{a+1}}{1-x_a}x_a \right)^{\nu_a} F(R(x),\nu) \nonumber \\
&=C^{-1} \prod_{a=1}^N \left(\frac{U_{a+1}-U_{a+2}x_{a+1}}{U_a-U_{a+1}x_a}x_a \right)^{\nu_a} F(R(x),\nu) \nonumber \\
&=C^{-1}\prod_{a=1}^N (x_{a+1})^{\nu_a} F(R(x),\nu)  \nonumber \\
&=F\big((C^{-1} R(x)),\nu\big) \prod_{a=1}^N {x_{a}}^{\nu_a}.
\end{align}
The last expression is nothing but the desired one $\hbox{(RHS)} x^{\nu}$. 
\end{proof}

\begin{thm} 
If Conjecture \ref{con:1.6} is true,
the four dimensional limit  $\psi_{4d}=\psi_{5d}\Big{|}_{h \to 0}$ of the partition function  satisfies the equation
\begin{equation}\label{FSTeq}
\sum_{a=1}^N \left\{ \vartheta_a(\vartheta'_a+\gamma_a)+\frac{U_{a}}{1-\Lambda} (m_a+\vartheta'_a)(\tilde{m}_a+\vartheta'_a)\right\}\psi_{4d}=0.
\end{equation}
Namely, \eqref{eq:6.1} implies \eqref{FSTeq}.
\end{thm}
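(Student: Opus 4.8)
The plan is to take the $h\to 0$ limit of the five-dimensional equation $(A_1-A_2)\psi_{5d}=0$ directly, using the expansions of $A_1$ and $A_2$ already recorded in the excerpt. First I would note that both $A_1$ and $A_2$ share the same leading factor $:\prod_{a=1}^N(1-x_a)^{-\vartheta'_a-\tilde m_a}:$, so the leading-order ($O(h^0)$) part of $A_1-A_2$ vanishes identically and the content lives at order $h$. Writing $A_1-A_2 = \big(:\prod_a(1-x_a)^{-\vartheta'_a-\tilde m_a}:\big)\cdot h\,\sum_a(K_{1,a}-K_{2,a}) + O(h^2)$, the equation $(A_1-A_2)\psi_{5d}=0$ becomes, in the limit $h\to 0$,
\begin{equation*}
:\prod_{a=1}^N(1-x_a)^{-\vartheta'_a-\tilde m_a}:\;\sum_{a=1}^N\Big(\vartheta_a(\vartheta'_a+\gamma_a)+\tfrac{x_a}{1-x_a}(\vartheta'_a+m_a)(\vartheta'_a+\tilde m_a)\Big)\psi_{4d}=0,
\end{equation*}
using \eqref{eq:K12-lim}. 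Since the prefactor $:\prod_a(1-x_a)^{-\vartheta'_a-\tilde m_a}:$ is invertible (as an operator on formal power series it is $\prod_a(1-x_a)^{-\vartheta_a+\vartheta_{a-1}}$ times a unit), one may strip it off, leaving $\sum_a\big(\vartheta_a(\vartheta'_a+\gamma_a)+\frac{x_a}{1-x_a}(\vartheta'_a+m_a)(\vartheta'_a+\tilde m_a)\big)\psi_{4d}=0$.

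The remaining task is to convert $\frac{x_a}{1-x_a}$ into $\frac{U_a}{1-\Lambda}$, i.e.\ to pass from the ``bare'' expansion variables to the variables natural for the Fuji-Suzuki-Tsuda system. This is exactly what Lemma \ref{Lemma:CR} is built for: I would apply the operator $C^{-1}R$ to the equation. Concretely, using the identity $U_a - x_a U_{a+1} = x_a(1-\Lambda)$ proved just above the lemma, one has $R\big(\frac{x_a}{1-x_a}\big) = \frac{R(x_a)}{1-R(x_a)} = \frac{x_a U_{a+1}/U_a}{1-x_aU_{a+1}/U_a} = \frac{x_a U_{a+1}}{U_a - x_a U_{a+1}} = \frac{x_a U_{a+1}}{x_a(1-\Lambda)} = \frac{U_{a+1}}{1-\Lambda}$, and since $R$ fixes $\Lambda$; after the further cyclic relabeling by $C^{-1}$ (which sends index $a+1\mapsto a$ and shifts $U_{a+1}\mapsto U_a$), the coefficient $\frac{x_a}{1-x_a}$ becomes $\frac{U_a}{1-\Lambda}$. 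The operators $\vartheta_a$, $\vartheta'_a$, $\gamma_a$, $m_a$, $\tilde m_a$ all pass through $C^{-1}R$ appropriately by the normal-ordering bookkeeping in Lemma \ref{Lemma:CR} (they act on the $\vartheta$-slot, which $C^{-1}R$ leaves alone, while the cyclic relabeling is absorbed in the sum $\sum_{a=1}^N$). Applying $C^{-1}R$ to $\psi_{4d}$ and renaming it back to $\psi_{4d}$, one obtains precisely \eqref{FSTeq}.

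I expect the main obstacle to be the careful tracking of the normal-ordering and the cyclic index shift when $C^{-1}R$ is applied to the full operator $\sum_a\big(\vartheta_a(\vartheta'_a+\gamma_a)+\frac{x_a}{1-x_a}(\vartheta'_a+m_a)(\vartheta'_a+\tilde m_a)\big)$ rather than to a single monomial — in particular, checking that the prefactor $:\prod_a(1-x_a)^{-\vartheta'_a-\tilde m_a}:$ is exactly the factor that Lemma \ref{Lemma:CR} is designed to absorb (note $\tilde m_a$ is a constant shift, so $(1-x_a)^{-\vartheta'_a-\tilde m_a} = (1-x_a)^{-\tilde m_a}\cdot(1-x_a)^{-\vartheta'_a}$, and the constant powers $(1-x_a)^{-\tilde m_a}$ must be checked to commute past $C^{-1}R$ harmlessly or be folded into the $F$ of the lemma). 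A secondary point worth spelling out is the invertibility/removability of this prefactor as an operator on the space of formal series in which $\psi_{4d}$ lives, so that stripping it off is legitimate. Once these bookkeeping points are settled, the computation is the routine one indicated above, and \eqref{eq:6.1}$\,\Rightarrow\,$\eqref{FSTeq} follows.
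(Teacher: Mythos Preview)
Your ingredients are the same as the paper's, but the order of operations in your main narrative is inverted, and that inversion is not harmless. You first ``strip off'' the prefactor $:\prod_a(1-x_a)^{-\vartheta'_a-\tilde m_a}:$ and only afterwards apply $C^{-1}R$ via Lemma~\ref{Lemma:CR}. Neither step works as written. First, normal ordering does not factor: $:\!P(x,\vartheta)G(x,\vartheta)\!:\ne\ :\!P(x,\vartheta)\!:\circ :\!G(x,\vartheta)\!:$ in general, because the $\vartheta$ in $P$ is evaluated at the exponent of the \emph{input} monomial, not at the exponent after $:G:$ has already acted. So invertibility of $:P:$ on its own does not let you pass from $:PG:\psi=0$ to $:G:\psi=0$. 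Second, once you have stripped the factor $\prod_a(1-x_a)^{-\vartheta'_a}$, the hypothesis of Lemma~\ref{Lemma:CR} is no longer present, so you cannot invoke it to turn $\frac{x_a}{1-x_a}$ into $\frac{U_a}{1-\Lambda}$ while keeping the action on $\psi_{4d}$.

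The paper's proof does the two moves in the opposite order, which is exactly what makes it work. One applies $C^{-1}R$ from the left to the full $O(h)$ equation $:\prod_a(1-x_a)^{-\vartheta'_a}\cdot\big[\prod_a(1-x_a)^{-\tilde m_a}\sum_a(K_{1,a}-K_{2,a})\big]:\psi_{5d}=0$. Lemma~\ref{Lemma:CR} absorbs the $\vartheta'$-dependent piece $\prod_a(1-x_a)^{-\vartheta'_a}$ and replaces $x$ by $C^{-1}R(x)$ inside the bracket; crucially, the operator still acts on the \emph{same} $\psi_{5d}$, so there is no ``renaming'' of the wave function. The residual factor $\prod_a(1-C^{-1}R(x_a))^{-\tilde m_a}$ is now a genuine scalar function of $x$ (no $\vartheta$), and for such a factor normal ordering \emph{does} split, so it can be divided out. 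Your computation $C^{-1}R\big(\frac{x_a}{1-x_a}\big)=\frac{U_a}{1-\Lambda}$ is correct and is exactly what then yields \eqref{FSTeq}. In short: keep your calculations, but swap the order --- apply $C^{-1}R$ first and strip afterwards --- and drop the remark about renaming $\psi_{4d}$.
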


\begin{proof}
By using \eqref{eq:K12-lim} and the Lemma \ref{Lemma:CR}, apply
$C^{-1}R$ from the left to the $h$-expansion of  the equation$(A_1-A_2) \cdot \psi_{5d}=0$. 
Then the  equation \eqref{FSTeq} is obtained as  $O(h)$ term.
\end{proof}

\begin{rmk} In \cite{Yamada:2010rr},  
in the context of four dimensional gauge theories, the equation \eqref{FSTeq}  
has been obtained as a quantization of the differential Fuji-Suzuki-Tsuda (FST) system. 

The FST system (called $P_{\rm VI}$-chain in \cite{Tsu1})
was first considered by Tsuda as a similarity reduction of his UC-hierarchy which is a certain generalization of the KP hierarchy.
Independently, in the context of the Drinfeld-Sokolov hierarchy, it was obtained by Fuji-Suzuki \cite{FS} $(N=3)$ 
and Suzuki \cite{Suz} $(N\geq 3)$. 
The FST system relevant here is an isomonodromic deformation of the $N \times N$
Fuchsian equation on ${\mathbb P}^1$ with four regular singularities at $z=0,1,t,\infty$
with the following spectral type
\[
\left((1^N), \underbrace{(N-1,1),\cdots,(N-1,1)}_{k},(1^N)\right), \quad (k=2).
\]
Its multi-time extensions $(k\geq 3)$ has also been studied in \cite{Tsu2}.
The case $N=2, k=2$ is $P_{\rm VI}$ and the cases $N=2, k\geq 3$ are the Garnier system. 
The $q$-difference version of the FST system is also known.
Interestingly, in $q$-case, there exists a duality between the system of type $(N,k)=(m,2)$ and $(N, k)=(2,m)$ (see e.g. \cite{Nagao-Yamada} ).
\end{rmk}

Recently some constructions of the $q$-FST system through the cluster algebra were obtained
 (see e.g. \cite{Okubo-Suzuki} and \cite{Suzuki-Okubo}).
Since such formulation will also give a natural way of the quantization (see e.g. \cite{Marshakov:2019vnz}),
the relation to our construction from the affine Laumon space is an interesting problem.
The relation to the tetrahedron $R$ matrix uncovered in section \ref{sec:mass-truncation} also
gives a link to the cluster algebra. In \cite{Gavrylenko:2020eov} it was argued that 
the conjugation by the tetrahedron $R$ matrix can be described by a sequence of mutations
(see also \cite{Inoue:2024swb} for the quantum case).

In \cite{Gavrylenko:2018ckn} it was shown that the $W_N$ conformal block with a fully degenerate field 
gives a solution to the FST system. This opens a problem of a higher rank generalization of 
the relation of the Virasolo conformal block with a degenerate field and the conformal block 
of the $SL(2)$ current algebra which is described by the KZ equation. 
We find it intriguing in what sense the affine Laumon partition function gives a conformal block of
the deformed $W$ algebra.

\newpage

%
\begin{ack}
We would like to thank M.~Bershtein, J.-E.~Bourgine, P.~Di Francesco,
Ken Ito, R.~Kedem, K.~Oshima and V.~Pasquier for useful discussions. 
We are also grateful to the anonymous referees for useful comments.
Our work is supported in part by Grants-in-Aid for Scientific Research (Kakenhi);
25K06912 (K.H.), 23K03087 (H.K.), 21K03180 (R.O. and J.S.), 24K06753 (J.S.) and 23K22387 (Y.Y.).
The work of R.O. was partly supported by the Research Institute for Mathematical Sciences,
an International Joint Usage/Research Center located in Kyoto University.
The research of S.S. was carried out within the state assignment of Ministry of Science 
and Higher Education of the Russian Federation for IITP RAS.
\end{ack}
\vspace{5mm}

\appendix

\section{Symmetric form of $\widehat{\mathfrak{gl}}_2$ Hamiltonian}
\label{App-A}
The $\widehat{\mathfrak{gl}}_2$-non-stationary difference equation considered in \cite{Shakirov:2021krl} and \cite{AHKOSSY1} is
\begin{equation}\label{Shakirov;Appendix}
\SS T_{qtQ,x}^{-1}T_{t,\Lambda}^{-1} \cdot \Psi(\Lambda,x)=\Psi(\Lambda,x).
\end{equation}
The Hamiltonian is
\begin{align}
\SS =& \frac{1}{\varphi(qx)\varphi(\Lambda/x)} \cdot \Bor \cdot 
\frac{\varphi(\Lambda)\varphi(q^{-1} d_1d_2d_3d_4\Lambda)}{\varphi(-d_1x)\varphi(-d_2x)\varphi(-d_3 \Lambda/x)\varphi(-d_4 \Lambda/x)}
\nonumber \\
&~~\cdot \Bor\cdot \frac{1}{\varphi(q^{-1}d_1d_2x)\varphi(d_3d_4 \Lambda/x)},
\end{align}
where $\varphi(z) := (z;q)_\infty$ and $\Bor$ is the $q$-Borel transformation on a formal Laurent series in $x$;
\begin{equation}\label{q-Borel}
\Bor \left( \sum_{n} c_n x^n \right) = \sum_{n} q^{\frac{1}{2}n(n+1)} c_n x^n.
\end{equation}
In order to generalize the non-stationary difference equation \eqref{Shakirov;Appendix} to higher rank, it is instructive 
to recast $\SS$ into more \lq\lq symmetric\rq\rq\ form.
In terms of homogeneous coordinates $x_1:=x$ and $x_2:= q^{-1} \Lambda /x$,
the total Hamiltonian becomes
\begin{align}
\SS  T_{qtQ,x}^{-1}T_{t,\Lambda}^{-1} =& \frac{1}{\varphi(qx_1)\varphi(qx_2)} \cdot \Bor \cdot 
\frac{\varphi(q x_1 x_2)\varphi(d_1d_2d_3d_4 x_1 x_2)}{\varphi(-d_1x_1)\varphi(-d_2x_1)\varphi(-q d_3 x_2)\varphi(-q d_4 x_2)}
\nonumber \\
&~~\cdot \Bor\cdot \frac{1}{\varphi(q^{-1}d_1d_2x_1)\varphi(qd_3d_4 x_2)} T_{qtQ,x_1}^{-1}T_{qQ,x_2}.
\end{align}
Since $x_1^{n} x_2^{m} \sim x^{n-m}$ the $q$-Borel transformation in coordinates $(x_1,x_2)$ is
\begin{equation}\label{relative}
 \Bor = q^{\frac{1}{2} (\vartheta_1 - \vartheta_2)(\vartheta_1 - \vartheta_2 +1)} = q^{\frac{1}{2} (\vartheta_1 - \vartheta_2)^2} 
 p_1^{\frac{1}{2}} p_2^{-\frac{1}{2}},
\end{equation}
where $\vartheta_i := x_i \frac{\partial}{\partial x_i}$ and $p_i = q^{\vartheta_i}= T_{q, x_i}$. 
Hence, we have
\begin{align}
\SS  T_{qtQ,x}^{-1}T_{t,\Lambda}^{-1} =& \frac{1}{\varphi(qx_1)\varphi(qx_2)} \cdot q^{\frac{1}{2} (\vartheta_1 - \vartheta_2)^2} \cdot 
\frac{\varphi(q x_1 x_2)\varphi(d_1d_2d_3d_4 x_1 x_2)}{\varphi(-q^{\frac{1}{2}} d_1x_1)\varphi(-q^{\frac{1}{2}} d_2x_1)
\varphi(-q^{\frac{1}{2}} d_3 x_2)\varphi(-q^{\frac{1}{2}} d_4 x_2)}
\nonumber \\
&~~\cdot q^{\frac{1}{2} (\vartheta_1 - \vartheta_2)^2} \cdot \frac{1}{\varphi(d_1d_2x_1)\varphi(d_3d_4 x_2)} T_{tQ,x_1}^{-1}T_{Q,x_2}.
\end{align}
The shift operator $ p_1^{\frac{1}{2}} p_2^{-\frac{1}{2}}$ in \eqref{relative} is combined with the original shift operator $T_{qtQ,x_1}^{-1}T_{qQ,x_2}$.
Note that the factors in the numerator commute with $q^{\frac{1}{2} (\vartheta_1 - \vartheta_2)^2}$. 
Parametrizing $Q= \kappa \frac{b_2}{b_1}$ with $\kappa:= t^{-\frac{1}{2}}$, we finally obtain
\begin{align}\label{gl2}
\SS  T_{qtQ,x}^{-1}T_{t,\Lambda}^{-1} =& \frac{1}{\varphi(qx_1)\varphi(qx_2)} \cdot q^{\frac{1}{2} (\vartheta_1 - \vartheta_2)^2} \cdot 
\frac{\varphi(q x_1 x_2)\varphi(d_1d_2d_3d_4 x_1 x_2)}{\varphi(-q^{\frac{1}{2}} d_1x_1)\varphi(-q^{\frac{1}{2}} d_2x_1)
\varphi(-q^{\frac{1}{2}} d_3 x_2)\varphi(-q^{\frac{1}{2}} d_4 x_2)}
\nonumber \\
&~~\cdot q^{\frac{1}{2} (\vartheta_1 - \vartheta_2)^2} \cdot \frac{1}{\varphi(d_1d_2x_1)\varphi(d_3d_4 x_2)} 
T_{\frac{\kappa b_1}{b_2},x_1}T_{\frac{\kappa b_2}{b_1},x_2}.
\end{align}

The rescaling $x_i \to - q^{\frac{1}{2}} x_i$ and the exchange of mass parameters $d_2 \leftrightarrow d_3$ implies
a complete matching of the Hamiltonian  \eqref{eq:glN Hamiltonian} with $N=2$ and \eqref{gl2}. 
When $N=2$, $\Delta = (\vartheta_1 - \vartheta_2)^2$ and the twist operation on $\varphi(-\check{x}_0)^{-1}$ and $\varphi(-\hat{x}_0)^{-1}$ is trivial.
Hence the Hamiltonian \eqref{eq:glN Hamiltonian} reduces to 
\begin{equation}\label{gl2-Hamiltonian}
\mathcal{H}^{\widehat{\mathfrak{gl}}_2}(x_i; b_i, d_i, q,t) = q^{\frac{1}{2} (\vartheta_1 - \vartheta_2)^2} \cdot \mathcal{A}_L \cdot \mathcal{A}_C \cdot \mathcal{A}_R 
\cdot q^{\frac{1}{2} (\vartheta_1 - \vartheta_2)^2} \cdot T_{\frac{\kappa b_1}{b_{2}}, x_1} T_{\frac{\kappa b_2}{b_{1}}, x_2},
\end{equation}
where
\begin{align*}
\mathcal{A}_L &= \frac{1}{\varphi(-\check{x}_{2})}  \frac{1}{\varphi(-\check{x}_{1})}  \varphi(\Lambda), 
\\
\mathcal{A}_R &= \varphi(q^{-1} d_1d_2d_3d_4 \Lambda ) \frac{1}{\varphi(-d_1d_3\hat{x}_1)} \frac{1}{\varphi(-d_2d_4\hat{x}_2)},
\\
\mathcal{A}_C &= \frac{1}{\varphi(d_1 x_1)\varphi(d_3 x_1 )\varphi(d_2 x_2)\varphi(d_4 x_2)},
\end{align*}
and we have identified $x_0$ with $x_2$. In order to compare the Hamiltonian \eqref{gl2-Hamiltonian}
with the symmetric form of the ${\widehat{\mathfrak{gl}}_2}$ Hamiltonian \eqref{gl2},
we have to commute $q^{\frac{1}{2} (\vartheta_1 - \vartheta_2)^2} $ with $\mathcal{A}_L$ or  $\mathcal{A}_R$ by
using the formula \eqref{adjoint-onpower} in the next subsection. This commutation removes the hat and the the check on $x_i$;  
$\check{x}_i \to x_i$ and  $\hat{x}_i \to x_i$ 
and also it scales $x_i$  by $q^{\pm \frac{1}{2}}$. Namely 
\begin{align}
\mathcal{A}_L \longrightarrow \widetilde{\mathcal{A}}_L&= \frac{1}{\varphi(-q^{\frac{1}{2}} x_{2})}  \frac{1}{\varphi(- q^{\frac{1}{2}} x_{1})}  \varphi(\Lambda), 
\\
\mathcal{A}_R \longrightarrow \widetilde{\mathcal{A}}_R&= \varphi(q^{-1} d_1d_2d_3d_4 \Lambda )
 \frac{1}{\varphi(-d_1d_3 q^{-\frac{1}{2}} x_1)} \frac{1}{\varphi(-d_2d_4 q^{-\frac{1}{2}} x_2)},
\end{align}
Then as claimed above by the rescaling $x_i \to - q^{\frac{1}{2}} x_i$ and the exchange of mass parameters, we see the agreement of
the Hamiltonians \eqref{gl2-Hamiltonian} and \eqref{gl2}.

\subsection{$q$-Borel transformation for multi-variables}

In the higher rank generalization of $\SS$, there appear the operators of the form $q^{\mathcal{L}}$, where $\mathcal{L}$ is a second order polynomial 
in the Euler derivative $\vartheta_i$. For later convenience let us work out the commutation relations with coordinate variables $x_i$. 
Since $q^{\frac{1}{2}\vartheta_i^2} x_i \cdot x_i^{n} = q^{\frac{1}{2}(n+1)^2} x_i^{n+1}
= q^{n + \frac{1}{2}} x_i q^{\frac{1}{2}\vartheta_i^2} \cdot x_i^n = q^{\frac{1}{2}} x_i p_i q^{\frac{1}{2}\vartheta_i^2} \cdot x_i^n$,
we see
\begin{equation}\label{xx-com}
{\mathrm{Ad}}(q^{\pm \frac{1}{2}\vartheta_i^2}) \cdot x_i = q^{\pm \frac{1}{2}} x_i p_i^{\pm 1}.
\end{equation}
We also have
\begin{equation}\label{xy-com}
{\mathrm{Ad}}(q^{\pm \vartheta_i \vartheta_j}) \cdot x_i =  x_i p_j^{\pm 1},  \qquad (i \neq j).
\end{equation}
Let us introduce the relative $q$-Borel transformation $\widetilde{\Bor}_{xy}$ by (see eq.\eqref{relative})
\begin{equation}
\widetilde{\Bor}_{xy} := q^{\frac{1}{2}(\vartheta_x - \vartheta_y)(\vartheta_x - \vartheta_y +1)}
= q^{\frac{1}{2}(\vartheta_x^2 + \vartheta_y^2)} q^{-\vartheta_x \vartheta_y} p_x^{\frac{1}{2}} p_y^{-\frac{1}{2}}.
\end{equation}
Namely
\begin{equation}
\widetilde{\Bor}_{xy} \cdot x^n y^m = q^{\frac{1}{2}(n-m)(n-m+1)} x^n y^m.
\end{equation}
From \eqref{xx-com} and \eqref{xy-com} we obtain the commutation relation
\begin{equation}
{\mathrm{Ad}}(\widetilde{\Bor}_{xy}) \cdot x = p_x p_y^{-1} x,
\qquad 
{\mathrm{Ad}}(\widetilde{\Bor}_{xy}) \cdot y = y p_x^{-1} p_y. 
\end{equation}
We note that for $x'= p_x p_y^{-1} x$ and $y' = y p_x^{-1} p_y$, we have $x'y' = y' x' = xy$. 
On the power of $x_i$, we have
\begin{equation}\label{adjoint-onpower}
{\mathrm{Ad}}(q^{\pm \frac{1}{2}(\vartheta_i - \vartheta_j)^2}) \cdot x_i^n = q^{\pm \frac{1}{2}n^2} x_i^n p_{ij}^{\pm n}
= q^{\pm\frac{n}{2}}(x_i p_{ij}^{\pm 1})^n, \quad p_{ij} = p_i/p_j, \qquad (i \neq j).
\end{equation}
This formula is useful in the computation of the normal ordered Hamiltonian.


\section{Two types of the affine Laumon partition function
}\label{App-C}



There are two types of the Nekrasov factor, which we call Pochhammer type and hyperbolic-sine (sinh) type.
From the view point of the index theorem for the instanton moduli space, 
they come from the Dolbeault operator and the Dirac operator, respectively. 
Consequently we have  the affine Laumon partition function of Pochhammer type and of sinh type. 
When the moduli space is hyperK\"ahler, the index of the Dolbeault operator 
and the Dirac operator coincide, since the discrepancy is measured by
the first Chern class. However,
the affine Laumon space is not  hyperK\"ahler, because of the asymmetry of the chain-saw quiver \cite{FFNR}, \cite{FR}
and two types of the partition function are different in general.
In this appendix we will show that the $q$-Borel transformation transforms 
the affine Laumon partition function of sinh type into of Pochhammer type.

\subsection{ $q$-Borel transformation from sinh type into Pochhammer type} 

Let $\lambda$ be a Young diagram,
i.e. 
a partition $\lambda = (\lambda_1,\lambda_2,\cdots)$,
which is a sequence of nonnegative integers such that
$\lambda_{i} \geq \lambda_{i+1}$ and 
$|\lambda| = \sum_i \lambda_i < \infty$.
$\lambda^\vee $ denotes its conjugate (dual) diagram.
We define
\be
{\fweight\lambda k}:= \sum_{n \in \mathbb{Z}}
\lambda_{k+nN}, \qquad k \in \mathbb{Z}/N\mathbb{Z},
\ee
where we set $\lambda_i = 0$ for $i \leq 0$. 
Throughout Appendix \ref{App-C}, we use the notation
\begin{equation}
\mathsf{v} := (q{\tkappa})^{\ha}.
\end{equation}
In this appendix the notation $\equiv$ always means the congruence of integers modulo $N$.

%
For a pair of Young diagrams $\lambda$ and $\mu$, we define
the orbifolded Nekrasov factor of Pochhammer type as
\begin{align}
{ \fNekN kN{\lambda}{\mu}{\mathsf{v}Q}q{\tkappa} }
:=
\hskip-20pt
\prod_{\Frac{(i,j)\in\mu}{\mu^\vee_j-i+\ha \equiv -k-\ha}} 
\hskip-20pt
\left(1-Qq^{\lambda_i-j+\ha} {\tkappafactor{-\mu^\vee_j+i-\ha} }\right)
\cdot
\hskip-20pt
\prod_{\Frac{(i,j)\in\lambda}{\lambda^\vee_j-i+\ha \equiv k+\ha}} 
\hskip-20pt
\left(1-Qq^{-\mu_i+j-\ha} {\tkappafactor{\lambda^\vee_j-i+\ha} }\right).
\label{eq:ONekIpDef}
\end{align}
%
The orbifolded Nekrasov factor of sinh type
(\ref{inf-prod-form}) is written as
\be
{ \fNekNs kN{\lambda}{\mu}{\mathsf{v}Q}q{\tkappa} }
=
\hskip-20pt
\prod_{\Frac{(i,j)\in\mu}{\mu^\vee_j-i+\ha \equiv -k-\ha}} 
\hskip-20pt
\left[Qq^{\lambda_i-j+\ha} {\tkappafactor{-\mu^\vee_j+i-\ha} }\right]
\cdot
\hskip-20pt
\prod_{\Frac{(i,j)\in\lambda}{\lambda^\vee_j-i+\ha \equiv k+\ha}} 
\hskip-20pt
\left[Qq^{-\mu_i+j-\ha} {\tkappafactor{\lambda^\vee_j-i+\ha} }\right],
\ee
with $[x]:=x^{-\ha}-x^{\ha}$ (see Appendix F to \cite{AHKOSSY2}). 
It satisfies%
\footnote{
Such a simple symmetry is specific to the Nekrasov factor of sinh type.
But both
${ \fNekNs kN{\lambda}{\mu}{\mathsf{v}Q}q{\tkappa} }$ and 
${ \fNekN kN{\lambda}{\mu}{\mathsf{v}Q}q{\tkappa} }$
satisfy the inversion formula of type
 \be
{ \fNekN kN{\lambda}{\mu}{\mathsf{v}Q}q{\tkappa} }
= 
{ \fNekN {-k-1}N{\mu}{\lambda}{\mathsf{v}^{-1}Q}{q^{-1}}{ \tkappa^{-1} }  }
,
\nonumber
\ee
which follows directly from the definition.
}
 \be
{ \fNekNs kN{\lambda}{\mu}{\mathsf{v}Q}q{\tkappa} }
= 
(-1)^n{ \fNekNs kN{\lambda}{\mu}{\mathsf{v}^{-1}Q^{-1}}{q^{-1}}{ \tkappa^{-1} } },
\ee
where $n= {\fweight\mu {-k}}+ {\fweight\lambda {1+k}}$, as we will show in (\ref{eq:gLemma}).


For $N$-tuple of Young diagrams 
$\lambda^{(i)}$ with $i \in \mathbb{Z}/N\mathbb{Z}$,
let $\vlambda:=\left(\lambda^{(1)}, \lambda^{(2)},\cdots, \lambda^{(N)}\right)$.
For $N$ variables 
$x_i\in\bC$ with $i \in \mathbb{Z}/N\mathbb{Z}$,
let $\vx:=\left(x_1,x_2,\cdots, x_N\right)$. 
For $3N^2$ variables 
${\Kahlerm \ua  ij}, {\Kahlerm \vb ij}, {\Kahlerm \wc  ij}\in\bC$ with $i,j \in \mathbb{Z}/N\mathbb{Z}$, 
let 
\begin{align}
{ \NekZs{\vlambda}{\vx} }
&:=
\prod_{i,j=1}^N
\frac{
{\fNekNs {j-i}N{\emptyset}{\lambda^{(j)}} { {\mathsf{v}\Kahlerm \ua  ij} }q{\tkappa} }
{\fNekNs  {j-i}N{\lambda^{(i)}}{\emptyset} { {\mathsf{v}\Kahlerm \wc  ij} }q{\tkappa} }
}{
{\fNekNs {j-i}N{\lambda^{(i)}}{\lambda^{(j)}} { {\mathsf{v}\Kahlerm \vb ij} }q{\tkappa} }
}
x_i^{ {\fweight{\lambda^{(j)}} {1+i-j}} },
\label{eq:defZlambda}
\\
{ \NekZps{\vlambda}{\vx} }
&:=
\prod_{i,j=1}^N
\frac{1}{
{\fNekNs {j-i}N{\lambda^{(i)}}{\lambda^{(j)}} { {\mathsf{v}\Kahlerm \vb ij} }q{\tkappa} }
}
x_i^{ {\fweight{\lambda^{(j)}} {1+i-j}} }.
\label{eq:defZpurelambda}
\end{align}
We define $\NekZ{\vlambda}{\vx}$ and $\NekZp{\vlambda}{\vx}$ by replacing the orbifolded Nekrasov factor  
of sinh type with that of Pochhammer type.

%
As we will show in (\ref{eq:gggLemma}), 
the denominators and the numerators of 
(\ref{eq:defZlambda}) and 
(\ref{eq:defZpurelambda})
have even numbers of factors or brackets $[\quad]$.
Therefore, even if we change the sign of $[x]$, 
${ \NekZs{\vlambda}{\vx} }$
and
${ \NekZps{\vlambda}{\vx} }$
are unchanged.
%


Let
$\vt_{x_i}:=x_i \frac \partial{\partial x_i}$
and
\be
\Delta
:=
\ha\sum_{i=1}^N
\left({\vt_{x_{i-1}} }-{\vt_{x_i} }\right)^2
=\sum_{i=1}^N
\left( \vt_{x_i}^2 -\vt_{x_{i-1}}\vt_{x_i} \right).
\ee
Since $\vt_{x} x = x (1+\vt_{x} )$,
$q^{\vt_{x}^2} x^n \cdot 1 = x^n q^{(n+\vt_{x})^2}  \cdot 1 = x^n q^{n^2}  \cdot 1 $.
Thus, for any $c\in\bC$,  ${ \NekZs{\vlambda}{\vx} }$  satisfies
\begin{align}
q^{ {\frac c2}
\vt_{x_i} }
{ \NekZs{\vlambda}{\vx} }
&=
{ \NekZs{\vlambda}{\vx} }\, 
q^{ {\frac c2}
\sum_{j=1}^{N}
{\fweight{\lambda^{(j)}}{1+i-j} }
},
\label{eq:ZZshift}
\\
q^{ {\frac c2}\Delta
}
{ \NekZs{\vlambda}{\vx} }
&=
{ \NekZs{\vlambda}{\vx} }\, 
\prod_{i=1}^N
q^{ {\frac c4} 
\left(\sum_{j=1}^{N}\left(
{\fweight{\lambda^{(j)}}{i-j} }-{\fweight{\lambda^{(j)}}{1+i-j} }
\right)\right)^2}.
\label{eq:ZZqBorel}
\end{align}
The same relations are also valid for 
${ \NekZps{\vlambda}{\vx} }$, ${ \NekZ{\vlambda}{\vx} }$ and ${ \NekZp{\vlambda}{\vx} }$.
%


\begin{prp}\label{Prop-C}
When 
\be
{\mathsf{v}\Kahlerm \ua  ij} = \frac {\ua_i}{\vb_j}, \quad
{\mathsf{v}\Kahlerm \vb ij} = \frac{\vb_i}{\vb_j}, \quad
{\mathsf{v}\Kahlerm \wc  ij} = \frac{\vb_i}{\wc_j}, 
\label{eq:Quvw}
\ee
with $3N$ variables $\ua_i$, $\vb_i$ and $\wc_i$,
%
we have
\begin{align}
{ \NekZ{\vlambda}{\vx} }
&=
q^{\ha\Delta
}
\prod_{i=1}^N
\left(
\frac{\ua_i}{\vb_i} \frac{\vb_{i-1}}{\wc_{i-1}}\right)^{\ha\vt_{x_{i-1}}}
\cdot
{ \NekZs{\vlambda}{\vx} },
\label{eq:ZZ}
\\
{ \NekZp{\vlambda}{\vx} }
&=
q^{\ha \Delta
}
\prod_{i=1}^N
\left(\frac{\vb_{i-1}}{\vb_i} q{\tkappa} \right)^{\ha\vt_{x_{i-1}}}
\cdot
{ \NekZps{\vlambda}{\vx} }.
\label{eq:ZZpure}
\end{align}
\end{prp}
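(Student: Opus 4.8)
The plan is to reduce \eqref{eq:ZZ}--\eqref{eq:ZZpure} to a single‑factor identity for the orbifolded Nekrasov factor and then assemble. Since $[x]=x^{-\ha}-x^{\ha}=x^{-\ha}(1-x)$, replacing each bracket $[x]$ in the sinh‑type factor \eqref{inf-prod-form} by the Pochhammer bracket $(1-x)$ of \eqref{eq:ONekIpDef} multiplies that factor by $x^{\ha}$; hence
\begin{equation*}
\fNekN{k}{N}{\lambda}{\mu}{\mathsf{v}Q}{q}{\tkappa}
= M_k(\lambda,\mu;Q)\cdot \fNekNs{k}{N}{\lambda}{\mu}{\mathsf{v}Q}{q}{\tkappa},
\end{equation*}
where $M_k(\lambda,\mu;Q)$ is the product of $x^{\ha}$ over the two bracket‑products in \eqref{eq:ONekIpDef}: it equals $Q$ to half the number of brackets times a half‑integer power of $q$ and of $\tkappa$, the two exponents being the sums, over the boxes $(i,j)$ satisfying the residue conditions, of $\lambda_i-j+\ha$ respectively $-\mu^\vee_j+i-\ha$. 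First I would evaluate these box sums in closed form: the number of boxes in each product is a colored box count of $\mu$ (resp.\ $\lambda$), and the two linear box sums collapse to at most quadratic expressions in the colored box counts $\fweight{\lambda}{\bullet}$, $\fweight{\mu}{\bullet}$ --- a computation of exactly the type carried out in the appendices of \cite{AHKOSSY1}, \cite{AHKOSSY2} and in \cite{Ohkawa-Shiraishi}. Because each of the three families of Nekrasov factors in \eqref{eq:defZlambda} contributes an even number of brackets, no square‑root ambiguity survives and $M_k$ is an honest Laurent monomial.

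Next I would specialize by \eqref{eq:Quvw} and multiply the single‑factor identities over $i,j=1,\dots,N$, using all three families of \eqref{eq:defZlambda} for \eqref{eq:ZZ} and the vector family alone of \eqref{eq:defZpurelambda} for \eqref{eq:ZZpure}. Since the arguments of all Nekrasov factors are independent of $\vx$, for each fixed $\vlambda$ the ratio $\NekZ{\vlambda}{\vx}/\NekZs{\vlambda}{\vx}$ is the monomial $\prod_{i,j}M_{j-i}^{\pm1}$ (with the $M$'s specialized by \eqref{eq:Quvw}) in $q,\tkappa,\ua_i,\vb_i,\wc_i$, with exponents bilinear in the $\fweight{\lambda^{(j)}}{\bullet}$. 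On the right of \eqref{eq:ZZ}, writing $n_i:=\sum_{j=1}^N\fweight{\lambda^{(j)}}{1+i-j}$ for the power of $x_i$ in $\NekZs{\vlambda}{\vx}$, the operators act diagonally on the monomial $\prod_i x_i^{n_i}$ (cf.\ \eqref{eq:ZZshift}, \eqref{eq:ZZqBorel}): each factor $(\cdot)^{\ha\vt_{x_{i-1}}}$ contributes $(\cdot)^{\ha n_{i-1}}$ and $q^{\ha\Delta}$ contributes $\prod_i q^{\frac14(n_{i-1}-n_i)^2}$. Since the operators are diagonal, it suffices to establish, for every $\vlambda$, the termwise identity
\begin{equation*}
\prod_{i,j}M_{j-i}^{\pm1}
= \prod_{i=1}^N q^{\frac14(n_{i-1}-n_i)^2}\,\prod_{i=1}^N
\left(\frac{\ua_i}{\vb_i}\,\frac{\vb_{i-1}}{\wc_{i-1}}\right)^{\ha n_{i-1}},
\end{equation*}
and the analogue for \eqref{eq:ZZpure} with the vector family only and an extra $(q\tkappa)^{\ha n_{i-1}}$ per $i$.

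Finally I would match, on the two sides, the exponents of $q$, of $\tkappa$, and of each $\ua_i,\vb_i,\wc_i$. The $\ua_i,\vb_i,\wc_i$ exponents are linear in the colored box counts and, after the reindexing $\sum_j\fweight{\lambda^{(j)}}{i-j}=n_{i-1}$, reduce to $\ha n_{i-1}$ with the correct coefficients; this is bookkeeping. The substantive points are: (i) the $\tkappa$‑exponent produced by the two matter families cancels that produced by the vector family in the full case \eqref{eq:ZZ}, whereas in the pure case \eqref{eq:ZZpure} the surviving $\tkappa$‑exponent together with part of the $q$‑exponent of the vector family reassembles into exactly $\prod_i(q\tkappa)^{\ha n_{i-1}}$; and (ii) the residual $q$‑exponent is precisely the quadratic form $\frac14\sum_i(n_{i-1}-n_i)^2=\ha\Delta$ evaluated on $(n_i)$. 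I expect (i)--(ii) --- showing that the quadratic form in the colored box counts coming from the $q$‑ and $\tkappa$‑parts of the Nekrasov arguments collapses to $\ha\Delta$, with the $\tkappa$'s cancelling as claimed --- to be the main obstacle; the rest is the box‑counting of the first paragraph combined with the diagonal action \eqref{eq:ZZshift}--\eqref{eq:ZZqBorel}.
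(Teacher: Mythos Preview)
Your outline is correct and follows essentially the same route as the paper: write $\fNekN{k}{N}{\lambda}{\mu}{\mathsf v Q}{q}{\tkappa}=(f^{(k|N)}_{\lambda,\mu}\,g^{(k|N)}_{\lambda,\mu}(Q))^{1/2}\,\fNekNs{k}{N}{\lambda}{\mu}{\mathsf v Q}{q}{\tkappa}$, evaluate the resulting monomial via colored box counts, and match it against the diagonal action \eqref{eq:ZZshift}--\eqref{eq:ZZqBorel}. The paper's device for the step you flag as the main obstacle is to separate the $Q$--part $g$ from the $(q,\tkappa)$--part $f$ and then use the pairings $f^{(k|N)}_{\lambda,\mu}f^{(-1-k|N)}_{\mu,\lambda}=1$ and $\tfrac{f^{(k)}_{\emptyset\mu}f^{(k)}_{\lambda\emptyset}}{f^{(k)}_{\lambda\mu}}\cdot\tfrac{f^{(-k)}_{\emptyset\lambda}f^{(-k)}_{\mu\emptyset}}{f^{(-k)}_{\mu\lambda}}$, which kill all $\tkappa$--dependence in $f_{\vlambda}$ outright and reduce the $q$--exponent to the quadratic form $\tfrac12\sum_i(n_{i-1}-n_i)^2$; the surviving $q\tkappa$ in the pure case then comes solely from $Q^{\vb}_{ij}Q^{\vb}_{j,i-1}=(b_i/b_{i-1})(q\tkappa)^{-1}$.
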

%

Remark that, in the case of $N=2$, 
$
\Delta=\vt_{x_1}^2 -2\vt_{x_{1}}\vt_{x_2} +\vt_{x_2}^2
$.
%
Remark also that, 
since $\sum_{i=1}^{N}{\fweight{\lambda^{(j)}} {i-j}} = {\fweight{\lambda^{(j)}} {}}$,%
\footnote{
The inverse square of the last factor 
of (\ref{eq:ZZpureS}) 
$$
q^{-\ha 
\left(\sum_{j=1}^{N}\left(
{\fweight{\lambda^{(j)}}{i-j} }-{\fweight{\lambda^{(j)}}{1+i-j} }
\right)\right)^2}
\vb_{i}^{\sum_{j=1}^{N}\left(
{\fweight{\lambda^{(j)}}{i-j} }-{\fweight{\lambda^{(j)}}{1+i-j} }
\right) }
 {\tkappafactor{ - { \fweight{\lambda^{(i)}} { }}  } }
$$
up to $q^{ {\fweight{\lambda^{(i)}}{ }} }$
is the same as
the prefactor $s_i^{-m_i} q^{-m_i^2/2} \kappa^{-|\lambda^{(i)}|}$
of (12) in \cite{rf:Shiraishi}.
Here
$s_i := 1/\vb_i$ 
and
$m_i := \sum_{j=1}^{N}\left(
{\fweight{\lambda^{(j)}}{i-j} }-{\fweight{\lambda^{(j)}}{1+i-j} }
\right) $.
}
\begin{align}
\label{eq:ZZpureS}
&\hskip-23.5pt
{ \NekZp{\vlambda}{\vx} }
\cr
&\hskip-23.5pt=
{ \NekZps{\vlambda}{\vx} }
\prod_{i=1}^N
q^{\frac 14 
\left(\sum_{j=1}^{N}\left(
{\fweight{\lambda^{(j)}}{i-j} }-{\fweight{\lambda^{(j)}}{1+i-j} }
\right)\right)^2}
\vb_{i}^{-\ha\sum_{j=1}^{N}\left(
{\fweight{\lambda^{(j)}}{i-j} }-{\fweight{\lambda^{(j)}}{1+i-j} }
\right) }
\left(q{\tkappa}\right)^{\ha{\fweight{\lambda^{(i)}}{}} }. 
\end{align}


For any function $f$ of $\ua_i,\vb_j,\wc_k,q,t$, let
\begin{align}
&\hskip-37pt
\overline{f(\ua_1,\ua_2,\cdots, \vb_1,\vb_2,\cdots, \wc_1,\wc_2,\cdots,q,{\tkappa}) }
\cr
\hskip37pt
&:=
 f(\ua_1^{-1},\ua_2^{-1},\cdots, \vb_1^{-1},\vb_2^{-1},\cdots, \wc_1^{-1},\wc_2^{-1},\cdots,q^{-1},{ \tkappa^{-1} }).
\end{align}
Since, 
$\overline{ \NekZs{\vlambda}{\vx} }={ \NekZs{\vlambda}{\vx} }$ and
$\overline{ \NekZps{\vlambda}{\vx} }={ \NekZps{\vlambda}{\vx} }$, 
we have
\begin{align}
\overline{ \NekZ{\vlambda}{\vx} }
&=
q^{-\ha\Delta
}
\prod_{i=1}^N
\left(
\frac{\ua_i}{\vb_i} \frac{\vb_{i-1}}{\wc_{i-1}}\right)^{-\ha\vt_{x_{i-1}}}
\cdot
{ \NekZs{\vlambda}{\vx} },
\\
\overline{ \NekZp{\vlambda}{\vx} }
&=
q^{-\ha \Delta
}
\prod_{i=1}^N
\left(\frac{\vb_{i-1}}{\vb_i} q{\tkappa} \right)^{-\ha\vt_{x_{i-1}}}
\cdot
{ \NekZps{\vlambda}{\vx} }.
\end{align}


Let $[x]_c:=x^{\frac c2}[x]$ with $[x]=x^{-\ha}-x^{\ha}$.
From ${ \NekZs{\vlambda}{\vx} }$ and ${ \NekZps{\vlambda}{\vx} }$,
we define ${ \NekZsc{c}{\vlambda}{\vx} }$ and ${ \NekZpsc{c}{\vlambda}{\vx} }$ 
by replacing $[x]$ in the orbifolded Nekrasov factor of sinh type with  $[x]_c$.
For example, 
${ \NekZsc{0}{\vlambda}{\vx} }= {\NekZs{\vlambda}{\vx} }$,
${ \NekZsc{1}{\vlambda}{\vx} }= {\NekZ{\vlambda}{\vx} }$ and
${ \NekZsc{-1}{\vlambda}{\vx} }= \overline{\NekZ{\vlambda}{\vx} }$.

In view of \eqref{eq:ZZshift} and \eqref{eq:ZZqBorel}, 
it should be clear that even if we 
replace 
${ \NekZ{\vlambda}{\vx} }$ and ${ \NekZp{\vlambda}{\vx} }$
with
${ \NekZsc{c}{\vlambda}{\vx} }$ and ${ \NekZpsc{c}{\vlambda}{\vx} }$, respectly,
Proposition \ref{Prop-C} is true, 
if $\ha\Delta$ and $\ha\vartheta_{x_{i-1}}$ are replaced with 
${\frac c2}\Delta$ and ${\frac c2}\vartheta_{x_{i-1}}$, 
respectively.



\subsection{Proof of the Proposition \ref{Prop-C}} 

%
%
%
%
Before embarking on a proof of Proposition \ref{Prop-C}, it is convenient to introduce a few notations.
Since $(1-x) = x^{\frac{1}{2}}[x]$, we have
\be\label{Nekrasovfactor-rel}
{ \fNekN kN{\lambda}{\mu}{\mathsf{v}Q}q{\tkappa} }
=
{ \fNekNs kN{\lambda}{\mu}{\mathsf{v}Q}q{\tkappa} }
\left(
{\ffactor kN {\lambda}{\mu} }
{\gfactor kN {\lambda}{\mu} Q }
\right)^\ha,
\ee
where 
\begin{align}
{\ffactor k N \lambda\mu}
&:=
\hskip-15pt
\prod_{\Frac{(i,j)\in\mu}{\mu^\vee_j-i+\ha \equiv N-k-\ha}} 
\hskip-20pt
q^{\lambda_i-j+\ha} {\tkappafactor{-\mu^\vee_j+i-\ha} }
\cdot
\hskip-15pt
\prod_{\Frac{(i,j)\in\lambda}{\lambda^\vee_j-i+\ha \equiv k+\ha}} 
\hskip-20pt
q^{-\mu_i+j-\ha} {\tkappafactor{\lambda^\vee_j-i+\ha} },
\\
{\gfactor k N \lambda\mu Q}
&:=
\hskip-15pt
\prod_{\Frac{(i,j)\in\mu}{\mu^\vee_j-i+\ha \equiv N-k-\ha}} 
Q
\hskip10pt
\cdot
\hskip-6pt
\prod_{\Frac{(i,j)\in\lambda}{\lambda^\vee_j-i+\ha \equiv k+\ha}} 
Q.
\label{eq:gDef}
\end{align}
Let us denote
\begin{align}
f_\vlambda
&:=
\prod_{i,j=1}^N
\frac{
{\ffactor {j-i}N \emptyset{\lambda^{(j)}}  }
{\ffactor {j-i}N {\lambda^{(i)}}\emptyset  }
}{
{\ffactor {j-i}N {\lambda^{(i)}}{\lambda^{(j)}}   }
},
\qquad
g_\vlambda
:=
\prod_{i,j=1}^N
\frac{
{\gfactor {j-i}N \emptyset{\lambda^{(j)}}  {\Kahlerm \ua  ij}  }
{\gfactor {j-i}N {\lambda^{(i)}}\emptyset  {\Kahlerm \wc  ij}  }
}{
{\gfactor {j-i}N {\lambda^{(i)}}{\lambda^{(j)}}  {\Kahlerm \vb ij}  }
},
\\
f^{{\rm num}}_\vlambda
&:=
\prod_{i,j=1}^N
{\ffactor {j-i} N \emptyset{\lambda^{(j)}} }
{\ffactor  {j-i} N {\lambda^{(i)}}\emptyset }
=
\prod_{i,j=1}^N
{\ffactor {j-i} N \emptyset{\lambda^{(j)}} }
{\ffactor  {i-j-1} N {\lambda^{(j)}}\emptyset },
\\
g^{{\rm num}}_\vlambda
&:=
\prod_{i,j=1}^N
{\gfactor {j-i} N \emptyset{\lambda^{(j)}} { {\Kahlerm \ua  ij} } }
{\gfactor  {j-i} N {\lambda^{(i)}}\emptyset { {\Kahlerm \wc  ij} } }
=
\prod_{i,j=1}^N
{\gfactor {j-i} N \emptyset{\lambda^{(j)}} { {\Kahlerm \ua  ij} } }
{\gfactor  {i-j-1} N {\lambda^{(j)}}\emptyset { {\Kahlerm \wc  j{i-1}} } }, 
\\
f^{{\rm pure}}_\vlambda
&:=
\frac{f_\vlambda}{f^{{\rm num}}_\vlambda}
=
\prod_{i,j=1}^N
\frac{
1}{
{\ffactor {j-i}N {\lambda^{(i)}}{\lambda^{(j)}}   }
},
\qquad 
g^{{\rm pure}}_\vlambda
:=
\frac{g_\vlambda}{g^{{\rm num}}_\vlambda}
=
\prod_{i,j=1}^N
\frac{
1}{
{\gfactor {j-i}N {\lambda^{(i)}}{\lambda^{(j)}}  {\Kahlerm \vb ij}  }
}.
\label{eq:gpvlambdaDef}
\end{align}
From \eqref{Nekrasovfactor-rel}
we have
\begin{align}
{ \NekZ{\vlambda}{\vx} }
&=
\left(f_\vlambda g_\vlambda\right)^\ha
{ \NekZs{\vlambda}{\vx} },
\label{eq:ZZfg}
\\
{ \NekZp{\vlambda}{\vx} }
&=
\left(f^{{\rm pure}}_\vlambda g^{{\rm pure}}_\vlambda\right)^\ha
{ \NekZps{\vlambda}{\vx} }.
\label{eq:ZpZpfg}
\end{align}
Hence to prove Proposition \ref{Prop-C}, it is enough to evaluate 
$f_\vlambda$, $g_\vlambda$, $f^{{\rm num}}_\vlambda$ and $g^{{\rm num}}_\vlambda$.
This is achieved by the following four steps.


\subsubsection{Step 1 : Good combination}


We have
\be
{\ffactor {-k} N \mu\lambda}
=
\hskip-15pt
\prod_{\Frac{(i,j)\in\lambda}{\lambda^\vee_j-i+\ha \equiv k-\ha}} 
\hskip-15pt
q^{\mu_i-j+\ha} {\tkappafactor{-\lambda^\vee_j+i-\ha} }
\cdot
\hskip-15pt
\prod_{\Frac{(i,j)\in\mu}{\mu^\vee_j-i+\ha \equiv N-k+\ha}} 
\hskip-20pt
q^{-\lambda_i+j-\ha} {\tkappafactor{\mu^\vee_j-i+\ha} },
\ee
and we can eliminate $\kappa$ from $f$-factors by taking the following combinations:
\begin{align}
\frac{
{\ffactor k N \emptyset\mu}{\ffactor k N \lambda\emptyset}
}{
{\ffactor k N \lambda\mu}
}
&=
\hskip-15pt
\prod_{\Frac{(i,j)\in\mu}{\mu^\vee_j-i+\ha \equiv N-k-\ha}} 
\hskip-15pt
q^{-\lambda_i}
\hskip3pt
\cdot
\prod_{\Frac{(i,j)\in\lambda}{\lambda^\vee_j-i+\ha \equiv k+\ha}} 
\hskip-10pt
q^{\mu_i} ,
\label{eq:fffkqq}
\\
\frac{
{\ffactor {-k} N \emptyset\lambda}{\ffactor {-k} N \mu\emptyset}
}{
{\ffactor {-k} N \mu\lambda}
}
&=
\hskip-10pt
\prod_{\Frac{(i,j)\in\lambda}{\lambda^\vee_j-i+\ha \equiv k-\ha}} 
\hskip-25pt
~q^{-\mu_i}
\hskip3pt
\cdot
\prod_{\Frac{(i,j)\in\mu}{\mu^\vee_j-i+\ha \equiv N-k+\ha}} 
\hskip-20pt
q^{\lambda_i}.
\end{align}
We also have
\begin{align}
{\ffactor k N \lambda\mu}
{\ffactor {-1-k} N \mu\lambda}
&=
1,
\label{eq:ffg}
\\
{\gfactor k N \lambda\mu Q}
{\gfactor {-1-k} N \mu\lambda {Q'}}
&= 
\hskip-10pt
\prod_{\Frac{(i,j)\in\mu}{\mu^\vee_j-i+\ha \equiv N-k-\ha}} 
\hskip-25pt
QQ'
\hskip2pt
\cdot
\prod_{\Frac{(i,j)\in\lambda}{\lambda^\vee_j-i+\ha \equiv k+\ha}} 
\hskip-20pt
QQ'
\hskip7pt
=
{\gfactor k N \lambda\mu {QQ'} }.   \label{eq:ggg}  
\end{align}
Note that, by (\ref{eq:fffkqq}), 
\be
f_\vlambda
=
\prod_{i,j=1}^N
\hskip-10pt
\prod_{\Frac{(a,b)\in\lambda^{(j)} }{ {\lambda^{(j)} }^\vee_b-a+\ha \equiv i-j-\ha}} 
\hskip-26pt
q^{-\lambda^{(i)}_a}
\hskip3pt
\cdot
\prod_{\Frac{(a,b)\in\lambda^{(j)} }{ {\lambda^{(j)} }^\vee_b-a+\ha \equiv i-j+\ha} } 
\hskip-26pt
q^{\lambda^{(i)}_a} .~~
\label{eq:fffkqqLemma}
\ee


\subsubsection{Step 2 : Box product}




Since $\lambda^\vee_j-i$ is the leg length of  the square $(i,j)$ 
in the Young diagram of  a partition $\lambda$, we have
\be
\sum_{\Frac{(i,j)\in\lambda}{\lambda^\vee_j-i+1 \equiv k}} 
\hspace{-3mm} 1 
= \sum_{n=0}^\infty\lambda_{k+nN}
= {\fweight\lambda k}.
\ee
Hence, the power of $Q$'s in (\ref{eq:gDef})
is
$ {\fweight\mu {-k}}+ {\fweight\lambda {1+k}}$,
and 
\be
{\gfactor k N \lambda\mu Q}
=
Q^{{\fweight\mu {-k}}+ {\fweight\lambda {1+k}}}.
\label{eq:gLemma}
\ee
%

\begin{lem}
For $1\leq k \leq N-1$,
\begin{align}
\frac{
{\ffactor k N \emptyset\mu}{\ffactor k N \lambda\emptyset}
{\ffactor{-k} N \emptyset\lambda}{\ffactor{-k} N \mu\emptyset}
}{
{\ffactor k N \lambda\mu}
{\ffactor{-k} N \mu\lambda}
}
=&
\prod_{\Frac{i,j=1}{j-i \equiv k}}^N 
q^{\ha
\left({\fweight\mu{i-1} }-{\fweight\mu{i} }\right)
\left({\fweight\lambda{j-1} }-{\fweight\lambda{j} }\right)
}
\cr
\times&
\prod_{\Frac{i,j=1}{j-i \equiv -k}}^N 
q^{\ha
\left({\fweight\lambda{i-1} }-{\fweight\lambda{i} }\right)
\left({\fweight\mu{j-1} }-{\fweight\mu{j} }\right)
},
\label{eq:ffactorN}
\\
\frac{
{\ffactor 0 N \emptyset\lambda}{\ffactor 0 N \lambda\emptyset}
}{
{\ffactor 0 N \lambda\lambda}
}
=&~
q^{\ha\sum_{i=1}^N
\left({\fweight\lambda{i-1} }-{\fweight\lambda{i} }\right)^2}.
\label{eq:ffactorZero}
\end{align}
For $0\leq k \leq N-1$, 
\begin{align}
\frac{
{\gfactor k N \emptyset\mu {Q^{\ua}} }{\gfactor k N \lambda\emptyset {Q^{\wc}} }
}{
{\gfactor k N \lambda\mu {Q^{\vb}} }
}
&=
\left(\frac{Q^{\ua}}{Q^{\vb}}\right)^{\fweight{\mu} {-k}}
\left(\frac{Q^{\wc}}{Q^{\vb}}\right)^{\fweight{\lambda} {1+k}}
\label{eq:gfactorQ}
\end{align}
and 
\footnote{
For the Nekrasov partition function without surface defect, 
the $f$-factors corresponding to (\ref{eq:ffactorN}) and (\ref{eq:ffactorZero}) are 
equal to $1$ 
and the $g$-factors corresponding to (\ref{eq:gfactorQ}) and (\ref{eq:ggfactor}) are given by
replacing ${\fweight{\lambda}k }$ with ${\fweight{\lambda}{}}$.
Thus the Nekrasov partition function without surface defect
satisfies (\ref{eq:ZZ}) without $q$-Borel transformation $q^{\ha\Delta}$ 
and by replacing ${\fweight{\lambda}k }$ with ${\fweight{\lambda}{}}$.
}
\begin{align}
{\ffactor k N \lambda\mu}
{\ffactor {-1-k} N \mu\lambda}
= 1, 
\qquad 
{\gfactor k N \lambda\mu Q}
{\gfactor {-1-k} N \mu\lambda {Q'}}
=
\left(QQ' \right)^{{\fweight\mu {-k}}+ {\fweight\lambda {1+k}}}.
\label{eq:ggfactor}
\end{align}
\end{lem}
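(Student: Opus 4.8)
The plan is to handle the four displayed identities in two groups. The two $g$-factor statements are immediate: by the box-count \eqref{eq:gLemma}, $\gfactor k N \lambda\mu Q=Q^{\fweight{\mu}{-k}+\fweight{\lambda}{1+k}}$, so setting $\lambda=\emptyset$ and $\mu=\emptyset$ and dividing yields \eqref{eq:gfactorQ} at once, while combining \eqref{eq:gLemma} with the multiplicativity \eqref{eq:ggg} gives the $g$-part of \eqref{eq:ggfactor}; the $f$-part of \eqref{eq:ggfactor}, namely $\ffactor k N \lambda\mu\,\ffactor{-1-k}N\mu\lambda=1$, is exactly \eqref{eq:ffg}. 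Thus the entire content is \eqref{eq:ffactorN} and \eqref{eq:ffactorZero}.

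For \eqref{eq:ffactorN} I would start from the two $\kappa$-free formulas of Step~1: the identity \eqref{eq:fffkqq} itself supplies $\prod_{(i,j)\in\mu,\ \mu^\vee_j-i\equiv-k-1}q^{-\lambda_i}\cdot\prod_{(i,j)\in\lambda,\ \lambda^\vee_j-i\equiv k}q^{\mu_i}$, and its companion with $k\to-k$ supplies $\prod_{(i,j)\in\lambda,\ \lambda^\vee_j-i\equiv k-1}q^{-\mu_i}\cdot\prod_{(i,j)\in\mu,\ \mu^\vee_j-i\equiv-k}q^{\lambda_i}$. Multiplying them leaves precisely the ratio on the left of \eqref{eq:ffactorN}, equal to $\Big(\prod_{(i,j)\in\mu,\ \mathrm{leg}\equiv-k}q^{\lambda_i}\big/\prod_{(i,j)\in\mu,\ \mathrm{leg}\equiv-k-1}q^{\lambda_i}\Big)\cdot\Big(\prod_{(i,j)\in\lambda,\ \mathrm{leg}\equiv k}q^{\mu_i}\big/\prod_{(i,j)\in\lambda,\ \mathrm{leg}\equiv k-1}q^{\mu_i}\Big)$, where $\mathrm{leg}$ denotes $\nu^\vee_j-i$ in the diagram $\nu$.

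The heart of the matter is then a colour-counting step applied to each such product. Reading it column by column, in a column $j$ of $\nu$ of length $h=\nu^\vee_j$ the legs run over $0,1,\dots,h-1$ from the bottom, so the condition $\mathrm{leg}\equiv c\pmod N$ selects the rows $i\le h$ with $i\equiv h-c$; equivalently $\#\{\,j\le\nu_i : \nu^\vee_j\equiv a\,\}=\sum_{v\ge i,\ v\equiv a}(\nu_v-\nu_{v+1})$. Substituting this, exchanging the summations over rows and columns, and telescoping via $\sum_{v\ge i}(\nu_v-\nu_{v+1})=\nu_i$ collapses each individual row index into the coloured sizes $\fweight{\nu}{a}=\sum_n\nu_{a+nN}$ ($\nu=\lambda,\mu$) and their differences $\fweight{\nu}{a-1}-\fweight{\nu}{a}$. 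After this reduction the $\mu$-part and the $\lambda$-part become bilinear in the coloured sizes and reorganise exactly into $\prod_{j-i\equiv k}q^{\ha(\fweight{\mu}{i-1}-\fweight{\mu}{i})(\fweight{\lambda}{j-1}-\fweight{\lambda}{j})}$ and $\prod_{j-i\equiv-k}q^{\ha(\fweight{\lambda}{i-1}-\fweight{\lambda}{i})(\fweight{\mu}{j-1}-\fweight{\mu}{j})}$; the case $N=2,\ k=1$ can serve as a check, where both products collapse to $q^{-2AB}$ with $A=\sum_i(-1)^i\mu_i$, $B=\sum_i(-1)^i\lambda_i$. For \eqref{eq:ffactorZero} the same computation with $\mu=\lambda$ and $k=0$ applies: \eqref{eq:fffkqq} then gives $\prod_{(i,j)\in\lambda,\ \lambda^\vee_j-i\equiv-1}q^{-\lambda_i}\cdot\prod_{(i,j)\in\lambda,\ \lambda^\vee_j-i\equiv0}q^{\lambda_i}$, and the column count and telescoping produce the exponent $\ha\sum_i(\fweight{\lambda}{i-1}-\fweight{\lambda}{i})^2$. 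One should run this directly rather than take a square root of the $k=0,\ \mu=\lambda$ case of \eqref{eq:ffactorN}, although, since everything is a power of $q$, the square-root route is in any case harmless.

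The step I expect to be the real obstacle is exactly this colour-counting and telescoping: the difficulty is bookkeeping rather than conceptual --- the half-integer shifts $\equiv\pm k\pm\ha$, the reduction of every index modulo $N$ (with $0\equiv N$), and the signs through the Abel summation must all be tracked with care. The essential subtlety to respect is that $\lambda_i$ and $\mu_i$ for distinct $i$ in the same residue class are genuinely different, so one may not \emph{reduce the row index mod $N$} at the outset; it is only after the telescoping $\sum_{v\ge i}(\nu_v-\nu_{v+1})=\nu_i$ that the coloured sizes $\fweight{\nu}{a}$ emerge and the answer acquires its cyclic $\mathbb{Z}/N\mathbb{Z}$ form.
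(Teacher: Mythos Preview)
Your proposal is correct and follows the same route as the paper: the $g$-identities are immediate from \eqref{eq:gLemma} and \eqref{eq:ggg}, and for \eqref{eq:ffactorN}--\eqref{eq:ffactorZero} the paper likewise reduces via \eqref{eq:fffkqq} and its $k\to-k$ companion, rewrites each box product through the column-height count $\lambda^\vee_j=s\Leftrightarrow\lambda_{s+1}<j\le\lambda_s$ (equivalently your formula $\#\{j\le\nu_i:\nu^\vee_j\equiv a\}=\sum_{v\ge i,\,v\equiv a}(\nu_v-\nu_{v+1})$), and combines the $\pm k$ pieces into the bilinear form $(\mu_{i-1}-\mu_i)(\lambda_{j-1}-\lambda_j)$ before passing to coloured sizes. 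One small caveat in your phrasing: neither the $\lambda$-part nor the $\mu$-part alone reduces to coloured sizes --- each contributes only a half-range ($n\ge0$ and $n\ge1$ respectively), and it is their sum that extends to $\sum_{n\in\mathbb{Z}}\sum_{j-i=k+nN}(\mu_{i-1}-\mu_i)(\lambda_{j-1}-\lambda_j)$ and then factorises as $\sum_{j-i\equiv k}(\fweight{\mu}{i-1}-\fweight{\mu}{i})(\fweight{\lambda}{j-1}-\fweight{\lambda}{j})$.
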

\proof
By (\ref{eq:gLemma}),
\be
\frac{
{\gfactor k N \emptyset\mu {Q^{\ua}} }{\gfactor k N \lambda\emptyset {Q^{\wc}} }
}{
{\gfactor k N \lambda\mu {Q^{\vb}} }
}
=
\frac{
\left(Q^{\ua}\right)^{\fweight{\mu} {-k}}
\left(Q^{\wc}\right)^{\fweight{\lambda} {1+k}}
}{
\left(Q^{\vb}\right)^{{\fweight\mu {-k}}+ {\fweight\lambda {1+k}}}
},
\ee
which gives (\ref{eq:gfactorQ}).
From (\ref{eq:ffg}), (\ref{eq:ggg}) and  (\ref{eq:gLemma}), we get
(\ref{eq:ggfactor}).

For any $j,s\in\bZ_{>0}$, 
$\lambda^\vee_j=s$
if and only if 
$1+\lambda_{s+1}\leq j \leq \lambda_s$.
Thus, for $0\leq k \leq N-1$, 
\begin{align}
\prod_{\Frac{(i,j)\in\lambda}{\lambda^\vee_j-i+\ha \equiv k+\ha}} 
\hskip-10pt
q^{\mu_i} 
&=
\prod_{i\geq 1}\prod_{n\geq 0}
\prod_{\Frac{j\geq 1}{\lambda^\vee_j=i+k+nN}} 
q^{\mu_i} 
\cr
&=
\prod_{i\geq 1}\prod_{n\geq 0}
\prod_{\Frac{s\geq 1}{s=i+k+nN}} 
q^{\mu_i(\lambda_s-\lambda_{s+1})} 
\cr
&=
\prod_{n\geq 0}
\prod_{\Frac{i,j\in\bZ}{j-i=k+nN}} 
q^{\mu_{i-1}(\lambda_{j-1}-\lambda_{j})},
\label{eq:mu}
\end{align}
where we have used $\lambda^\vee_j=s\Leftrightarrow 1+\lambda_{s+1}\leq j \leq \lambda_s$ for the second equality.
By replacing 
$q$, $k$, $\lambda$ and $\mu$ in (\ref{eq:mu}) 
with
$1/q$, $N-1-k$, $\mu $ and $\lambda$, respectively, we have
\be
\prod_{\Frac{(i,j)\in\mu}{\mu^\vee_j-i+\ha \equiv N-k-\ha}} 
\hskip-20pt
q^{-\lambda_i} 
=
\prod_{n\geq 0}
\prod_{\Frac{i,j\in\bZ}{j-i+1=N-k+nN}} 
q^{-\lambda_{i-1}(\mu_{j-1}-\mu_{j})}
=
\prod_{n\geq 1}
\prod_{\Frac{i,j\in\bZ}{j-i=-k+nN}} 
q^{-\lambda_{i}(\mu_{j-1}-\mu_{j})}.
\label{eq:mlambda}
\ee
Similarly, by replacing 
$q$, $k$ in (\ref{eq:mu}) and (\ref{eq:mlambda}) 
with
$1/q$, $k-1$, respectively, we obtain the formulas
for $1\leq k \leq N$.
Combining them, we have
\begin{align}
\prod_{\Frac{(i,j)\in\lambda}{\lambda^\vee_j-i+\ha \equiv k+\ha}} 
\hskip-15pt
q^{\mu_i} 
\hskip3pt
\cdot
\prod_{\Frac{(i,j)\in\lambda}{\lambda^\vee_j-i+\ha \equiv k-\ha}} 
\hskip-15pt
q^{-\mu_i} 
&=
\prod_{n\geq 0}
\prod_{\Frac{i,j\in\bZ}{j-i=k+nN}} 
q^{(\mu_{i-1}-\mu_{i})(\lambda_{j-1}-\lambda_{j})},
\cr
\prod_{\Frac{(i,j)\in\mu}{\mu^\vee_j-i+\ha \equiv N-k-\ha}} 
\hskip-15pt
q^{-\lambda_i} 
\cdot
\prod_{\Frac{(i,j)\in\mu}{\mu^\vee_j-i+\ha \equiv N-k+\ha}} 
\hskip-15pt
q^{\lambda_i} 
&=
\prod_{n\geq 1}
\prod_{\Frac{i,j\in\bZ}{j-i=-k+nN}} 
q^{(\lambda_{i-1}-\lambda_{i})(\mu_{j-1}-\mu_{j})}. \nonumber
\end{align}
Then, for $1\leq k \leq N-1$,
\begin{align}
\frac{
{\ffactor k N \emptyset\mu}{\ffactor k N \lambda\emptyset}
{\ffactor{-k} N \emptyset\lambda}{\ffactor{-k} N \mu\emptyset}
}{
{\ffactor k N \lambda\mu}
{\ffactor{-k} N \mu\lambda}
}
&=
\hskip-20pt
\prod_{\Frac{(i,j)\in\mu}{\mu^\vee_j-i+\ha \equiv N-k-\ha}} 
\hskip-20pt
q^{-\lambda_i} 
\cdot
\prod_{\Frac{(i,j)\in\lambda}{\lambda^\vee_j-i+\ha \equiv k+\ha}} 
\hskip-15pt
q^{\mu_i} 
\cdot
\prod_{\Frac{(i,j)\in\lambda}{\lambda^\vee_j-i+\ha \equiv k-\ha}} 
\hskip-15pt
q^{-\mu_i} 
\cdot
\prod_{\Frac{(i,j)\in\mu}{\mu^\vee_j-i+\ha \equiv N-k+\ha}} 
\hskip-20pt
q^{\lambda_i} 
\cr
&=
\prod_{n\in\bZ}
\prod_{\Frac{i,j\in\bZ}{j-i=k+nN}} 
q^{(\mu_{i-1}-\mu_{i})(\lambda_{j-1}-\lambda_{j})}
\cr
&=
\prod_{\Frac{i,j=1}{j-i\equiv k}}^N 
\prod_{n,m\geq 0}
q^{(\mu_{i-1+nN}-\mu_{i+nN})(\lambda_{j-1+mN}-\lambda_{j+mN})}, 
\nonumber
\end{align}
which reduces to (\ref{eq:ffactorN}).
In 
$\frac{
{\ffactor k N \emptyset\mu}{\ffactor k N \lambda\emptyset}
}{
{\ffactor k N \lambda\mu}
}
$
and
$\frac{
{\ffactor{-k} N \emptyset\lambda}{\ffactor{-k} N \mu\emptyset}
}{
{\ffactor{-k} N \mu\lambda}
}
$,
$k$ should be 
$0\leq k \leq N-1$ and $1\leq k \leq N$, respectively.
But 
\be
\frac{
{\ffactor 0 N \emptyset\lambda}{\ffactor 0 N \lambda\emptyset}
}{
{\ffactor 0 N \lambda\lambda}
}
=
\frac{
{\ffactor{-N} N \emptyset\lambda}{\ffactor{-N} N \lambda\emptyset}
}{
{\ffactor{-N} N \lambda\lambda}
}
=
\left(
\frac{
{\ffactor 0 N \emptyset\lambda}{\ffactor 0 N \lambda\emptyset}
}{
{\ffactor 0 N \lambda\lambda}
}
\frac{
{\ffactor{-N} N \emptyset\lambda}{\ffactor{-N} N \lambda\emptyset}
}{
{\ffactor{-N} N \lambda\lambda}
}\right)^\ha, \nonumber
\ee
which gives (\ref{eq:ffactorZero}).
\qed


\subsubsection{Step 3 :  $N^2$ product }


We have
\begin{lem}\label{L-3}
\begin{align}
f_\vlambda
&=
\prod_{i=1}^N
q^{\frac 12 
\left(\sum_{j=1}^{N}\left(
{\fweight{\lambda^{(j)}}{i-j} }-{\fweight{\lambda^{(j)}}{1+i-j} }
\right)\right)^2} ,
\label{eq:fffLemma}
\qquad
f^{{\rm num}}_\vlambda
= 1, \\
g_\vlambda
&=
\prod_{i,j=1}^N
\left(
\frac{ {\Kahlerm \ua  ij}  {\Kahlerm \wc  j{i-1}} }{ {\Kahlerm \vb ij} {\Kahlerm \vb j{i-1}} } 
\right)^{{\fweight{\lambda^{(j)}} {i-j}}}
,
\qquad
g^{{\rm num}}_\vlambda
=
\prod_{i,j=1}^N
\left( {\Kahlerm \ua  ij} {\Kahlerm \wc  j{i-1}}  \right)^{ {\fweight{\lambda^{(j)}} {i-j}} }.
\label{eq:gggLemma}
\end{align}
\end{lem}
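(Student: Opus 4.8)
The plan is to reduce each of the four evaluations to the pointwise identities for the $f$- and $g$-factors proved in Step~2, combined with elementary re-indexings of the $N\times N$ product that use the $\bZ/N\bZ$-periodicity of $\ffactor{k}{N}{\lambda}{\mu}$ and $\gfactor{k}{N}{\lambda}{\mu}{Q}$ in the color~$k$. Three of the four identities fall out immediately, and only $f_\vlambda$ requires a genuine reorganization of a triple sum.

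First I would settle $f^{{\rm num}}_\vlambda$ and the two $g$-quantities. Using the displayed second form $f^{{\rm num}}_\vlambda = \prod_{i,j=1}^N \ffactor{j-i}{N}{\emptyset}{\lambda^{(j)}}\,\ffactor{i-j-1}{N}{\lambda^{(j)}}{\emptyset}$ (obtained by relabeling $i\leftrightarrow j$ in the second factor and periodicity), each $(i,j)$-factor equals $1$ by the first relation in \eqref{eq:ffg} (equivalently \eqref{eq:ggfactor}) with $(\lambda,\mu,k)=(\emptyset,\lambda^{(j)},j-i)$; hence $f^{{\rm num}}_\vlambda=1$, and consequently $f^{{\rm pure}}_\vlambda=f_\vlambda$. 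For $g^{{\rm num}}_\vlambda$ I apply the box-product evaluation \eqref{eq:gLemma} to each factor, $\gfactor{j-i}{N}{\emptyset}{\lambda^{(j)}}{\Kahlerm{\ua}{i}{j}}=(\Kahlerm{\ua}{i}{j})^{\fweight{\lambda^{(j)}}{i-j}}$ and $\gfactor{i-j-1}{N}{\lambda^{(j)}}{\emptyset}{\Kahlerm{\wc}{j}{i-1}}=(\Kahlerm{\wc}{j}{i-1})^{\fweight{\lambda^{(j)}}{i-j}}$, whose product over $i,j$ is the asserted expression. For $g_\vlambda$ I invoke \eqref{eq:gfactorQ} factor by factor (with $\lambda=\lambda^{(i)},\mu=\lambda^{(j)},k=j-i$, legitimate for all $k$ by periodicity), getting $g_\vlambda=\big[\prod_{i,j}(\Kahlerm{\ua}{i}{j}/\Kahlerm{\vb}{i}{j})^{\fweight{\lambda^{(j)}}{i-j}}\big]\big[\prod_{i,j}(\Kahlerm{\wc}{i}{j}/\Kahlerm{\vb}{i}{j})^{\fweight{\lambda^{(i)}}{1+j-i}}\big]$; in the second bracket I swap the summation names $i\leftrightarrow j$ and then shift $i\mapsto i-1$ (both legitimate because the indices run over all of $\bZ/N\bZ$), which turns its $(i,j)$-factor into $(\Kahlerm{\wc}{j}{i-1}/\Kahlerm{\vb}{j}{i-1})^{\fweight{\lambda^{(j)}}{i-j}}$ and yields \eqref{eq:gggLemma}.

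The substantive step is $f_\vlambda$. I would split the defining product into the diagonal terms $i=j$ and the off-diagonal unordered pairs $\{i,j\}$, abbreviating $w^{(j)}_k:=\fweight{\lambda^{(j)}}{k}-\fweight{\lambda^{(j)}}{k+1}$. The diagonal part is handled by \eqref{eq:ffactorZero}, which after shifting the summation index gives $\prod_{i=1}^N q^{\ha\sum_k (w^{(i)}_k)^2}$. For a pair $\{i,j\}$ with $i\ne j$, the product of the $(i,j)$- and $(j,i)$-factors of $f_\vlambda$ is exactly the left-hand side of \eqref{eq:ffactorN} with $(\lambda,\mu,k)=(\lambda^{(i)},\lambda^{(j)},j-i)$, so by its right-hand side it equals $\prod_{b-a\equiv j-i}q^{\ha w^{(j)}_{a-1}w^{(i)}_{b-1}}\cdot\prod_{b-a\equiv i-j}q^{\ha w^{(i)}_{a-1}w^{(j)}_{b-1}}$; re-indexing each inner product (set $a-1\equiv m-j$, resp. $a-1\equiv m-i$, whence $b-1\equiv m-i$, resp. $b-1\equiv m-j$) collapses the pair contribution to $\prod_{m=1}^N q^{w^{(i)}_{m-i}w^{(j)}_{m-j}}$. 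Multiplying the diagonal contribution by the product over all off-diagonal pairs and regrouping exponents gives $f_\vlambda=\prod_{m=1}^N q^{\ha\sum_{i,j=1}^N w^{(i)}_{m-i}w^{(j)}_{m-j}}=\prod_{m=1}^N q^{\ha(\sum_{i=1}^N w^{(i)}_{m-i})^2}$, which is \eqref{eq:fffLemma} since $w^{(i)}_{m-i}=\fweight{\lambda^{(i)}}{m-i}-\fweight{\lambda^{(i)}}{1+m-i}$. As an independent check one can instead start from the $\kappa$-free form \eqref{eq:fffkqqLemma} of $f_\vlambda$ and recover the same answer.

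The main obstacle I anticipate is purely bookkeeping: carrying the mod-$N$ index shifts accurately enough—especially the off-by-one ``$-1$'' built into the color pairing $k\mapsto -1-k$—to see the triple sum $\sum_{m,i,j}w^{(i)}_{m-i}w^{(j)}_{m-j}$ emerge as $\sum_m(\sum_i w^{(i)}_{m-i})^2$, and to confirm that the diagonal and off-diagonal blocks of \eqref{eq:ffactorZero}--\eqref{eq:ffactorN} assemble with exactly the right multiplicities.
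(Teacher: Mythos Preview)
Your proposal is correct and follows essentially the same route as the paper: the $g$-identities are read off from \eqref{eq:gLemma}--\eqref{eq:ggfactor} together with the obvious $\bZ/N\bZ$-reindexings, $f^{{\rm num}}_\vlambda=1$ is immediate from \eqref{eq:ffg}, and for $f_\vlambda$ both arguments combine \eqref{eq:ffactorN} and \eqref{eq:ffactorZero} and then recognize a perfect square. The only cosmetic difference is that the paper packages your diagonal/off-diagonal re-indexing into the single combinatorial identity \eqref{eq:square-sum}, $\sum_{j-i\equiv \pm(b-a)} y^i_a y^j_b=\sum_a\big(\sum_i y^i_{a\pm i}\big)^2$, whereas you carry the same manipulation out by hand.
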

\begin{proof}
(\ref{eq:gggLemma}) follows from (\ref{eq:gfactorQ}) and (\ref{eq:ggfactor}).
By using (\ref{eq:ffactorN}) and (\ref{eq:ffactorZero}), 
we obtain
\be
f_\vlambda
=
\prod_{i,j=1}^N
\prod_{\Frac{a,b=1}{b-a \equiv j-i}}^N 
q^{\frac 12
\left({\fweight{\lambda^{(j)}}{a-1} }-{\fweight{\lambda^{(j)}}{a} }\right)
\left({\fweight{\lambda^{(i)}}{b-1} }-{\fweight{\lambda^{(i)}}{b} }\right)
}. \nonumber
\ee
For any variables $y_a^i$ such that $y_a^{i+N}=y_{a+N}^i=y_a^i$, we have
\be\label{eq:square-sum} 
\sum_{\Frac{ a,b,i,j=0 }{ j-i \equiv \pm(b-a) }}^{N-1} 
\hskip-12pt
y_a^i y_b^j
= 
\sum_{a=0}^{N-1}
\left(\sum_{i=0}^{N-1}
y_{a\pm i}^i \right)^2.
\ee
Therefore, when
$y_a^i={\fweight{\lambda^{(i)}}{a-1} }-{\fweight{\lambda^{(i)}}{a} }$,
\be 
\sum_{\Frac{ a,b,i,j=0 }{ j-i \equiv \pm(b-a) }}^{N-1} 
\hskip-10pt\left({\fweight{\lambda^{(i)}}{a-1} }-{\fweight{\lambda^{(i)}}{a} }\right)
\left({\fweight{\lambda^{(j)}}{b-1} }-{\fweight{\lambda^{(j)}}{b} }\right)
=
\sum_{i=1}^{N}
\left(\sum_{j=1}^{N}\left(
{\fweight{\lambda^{(j)}}{i\pm j} }-{\fweight{\lambda^{(j)}}{i\pm j+1} }
\right)\right)^2. \nonumber
\ee
\end{proof}

Here is a remark on (\ref{eq:square-sum}) in the case of $N=2$. 
It should read, if $y_{a+1}^i = -y_a^i$, 
\be
\sum_{\Frac{ a,b,i,j=0 }{ j-i \equiv \pm(b-a) }}^{1} 
y_a^i y_b^j
=
\sum_{a=0}^{1}
\left(\sum_{i=0}^{1}
y_{a\pm i}^i \right)^2
=
2
\left(\sum_{i=0}^{1}
y_{i}^i \right)^2.
\nonumber
\ee


\subsubsection{Final step}


By using relations
(\ref{eq:ZZfg}),
(\ref{eq:ZpZpfg}),
%
(\ref{eq:gpvlambdaDef})
and Lemma \ref{L-3},
%
we obtain
\begin{align}
{ \NekZ{\vlambda}{\vx} }
&=
{ \NekZs{\vlambda}{\vx} }
q^{\frac 14 \sum_{i=1}^{N}
\left(\sum_{j=1}^{N}\left(
{\fweight{\lambda^{(j)}}{i-j} }-{\fweight{\lambda^{(j)}}{1+i-j} }
\right)\right)^2}
\prod_{i,j=1}^N
\left(
\frac{ {\Kahlerm \ua  ij}  {\Kahlerm \wc  j{i-1}} }{ {\Kahlerm \vb ij} {\Kahlerm \vb j{i-1}} } 
\right)^{\ha{\fweight{\lambda^{(j)}} {i-j}}}
\cr
&=
q^{\ha\Delta
}
{ \NekZs{\vlambda}{\vx} }\prod_{i,j=1}^N
\left(
\frac{ {\Kahlerm \ua  ij}  {\Kahlerm \wc  j{i-1}} }{ {\Kahlerm \vb ij} {\Kahlerm \vb j{i-1}} } 
\right)^{\ha{\fweight{\lambda^{(j)}} {i-j}}}
\end{align}
and
\begin{align}
{ \NekZp{\vlambda}{\vx} }
&=
{ \NekZps{\vlambda}{\vx} }
q^{\frac 14 \sum_{i=1}^{N}
\left(\sum_{j=1}^{N}\left(
{\fweight{\lambda^{(j)}}{i-j} }-{\fweight{\lambda^{(j)}}{1+i-j} }
\right)\right)^2}
\prod_{i,j=1}^N
\left(\frac{ 1 }{ {\Kahlerm \vb ij} {\Kahlerm \vb j{i-1}} } \right)^{\ha{\fweight{\lambda^{(j)}} {i-j}}}
\cr
&=
q^{\ha \Delta
}
{ \NekZps{\vlambda}{\vx} }
\prod_{i,j=1}^N
\left(\frac{ 1 }{ {\Kahlerm \vb ij} {\Kahlerm \vb j{i-1}} } \right)^{\ha{\fweight{\lambda^{(j)}} {i-j}}}
,
\end{align}
where we also used (\ref{eq:ZZshift}) and (\ref{eq:ZZqBorel}) for recasting the first line to the second.%



Since (\ref{eq:Quvw}) implies
\be
\frac{ {\Kahlerm \ua  ij}  {\Kahlerm \wc  j{i-1}} }{ {\Kahlerm \vb ij} {\Kahlerm \vb j{i-1}} } = \frac{\ua_i}{\vb_i} \frac{\vb_{i-1}}{\wc_{i-1}},
\qquad
\frac{ 1 }{ {\Kahlerm \vb ij} {\Kahlerm \vb j{i-1}} } = \frac{\vb_{i-1}}{\vb_{i}} q{\tkappa}, \nonumber
\ee
we finally obtain Proposition \ref{Prop-C}.

%


\subsection{Inversion symmetry}


By using the  partition function of Pochhammer type ${ \NekZ{\vlambda}{\vx} }$,
we can rewrite the non-stationary $\glN$ equation (\ref{eq:EigenEq}) without 
$q$-Borel transformation $q^{ {\frac 12} \Delta}$.
Suppose $q^n\neq 1$ for any integer $n$.
Let us define the $q$-exponential 
without using the $q$-Pochhammer symbol 
by
\be
e_q\left(xq^{\frac 12}\right)
:=
\exp\left(-\sum_{n=1}^\infty \frac{x^n}{n} \frac{1} 
{q^{\frac n2} -q^{-\frac n2} }  \right),
\qquad q\in{\mathbb C}^{\times},
\ee
which is a formal power series in $x$. Then it satisfies
$e_q(xq^{\frac 12}) e_{q^{-1}}(xq^{-\frac 12})=1$.

Let
$d_i := q\kappa {b_i}/{a_{i+1}}$, 
$\overline{d}_i :={b_i}/{c_i}$,
\be
T 
:= 
\prod_{i=1}^N 
\left(\frac{\kappa b_i}{b_{i+1}} 
\right)^{\vartheta_i},
\qquad
\mathcal{S}:=\prod_{i=1}^N
\left(
\frac{q}{ d_i  {\dbar{i}} }
\right)^{\vartheta_i},
\qquad
\mathcal{T} 
:= 
\prod_{i=1}^N 
\left(\frac{d_i}{q {\dbar{i}} } 
\right)^{\vartheta_i},
\ee
then
 \be
{ \NekZ{\vlambda}{\vx} }
=
\left(
q^{\Delta}
 \mathsf{T}/\mathcal{T}
 \right)^{\frac 12}
{ \NekZs{\vlambda}{\vx} },
\qquad
\sum_{\vlambda} { \NekZs{\vlambda}{\vx} }
=(T\mathcal{S})^{\frac 12}\psi 
\ee
with $\psi$ in Conjecture \ref{con:1.6}. 
Also let
\be
\mathcal{A}'_{C\pm}
:=
\prod_{i=1}^{N} 
e_q
\left(x_i \sqrt{ q \left({ d_i} /{\overline{d}_i }\right)^{\pm1} }
\right),
\qquad
\mathcal{A}'_{*}
:=
\mathcal{S}^{\frac 12} 
\mathcal{A}_{*}
\mathcal{S}^{-\frac 12},
\qquad
*=L,C,R,
\ee
then
\be
\mathcal{A}'_C 
= 
\mathcal{A}'_{C+} 
\mathcal{A}'_{C-} ,
\qquad
\mathcal{A}'_{R}
= 
\ :\prod_{i=1}^N
e_q\left(x_i  \sqrt{q d_i \overline d_i} q^{\vartheta_i - \vartheta_{i-1}} \right)^{-1}
:.
\ee

The non-stationary $\glN$ equation $(\ref{eq:EigenEq})$ can be rewritten by
\be
 \left(q^{\Delta} \mathsf{T}/\mathcal{T}
\right)^{\frac 12}
\mathcal{T}^{\frac 12}
\mathcal{A}'_L  
\mathcal{A}'_{C-}  
\mathcal{A}'_{C+}  
\mathcal{A}'_R 
\mathcal{T}^{\frac 12}
\left(
q^{\Delta}\mathsf{T}/\mathcal{T}
 \right)^{\frac 12} 
\sum_{\vlambda} { \NekZs{\vlambda}{\vx} }
=
\sum_{\vlambda} { \NekZs{\vlambda}{\vx} }.
\ee
Let
$\psi^{\rm Poch} 
:=
\sum_{\vlambda}{ \NekZ{\vlambda}{\vx} 
=
\left(
q^{\Delta}
 \mathsf{T}/\mathcal{T}
 \right)^{\frac 12}
(T\mathcal{S})^{\frac 12}\psi 
}$.
Since
\be
\mathcal{A'}_{C-}^{-1}
=
\overline{
\mathcal{A}'_{C+}
},
\qquad
\mathcal{A'}_L^{-1}
=
\overline{
\mathcal{A}'_R
},
\ee
 we have
\begin{prp}
The non-stationary $\glN$ equation $(\ref{eq:EigenEq})$,
i.e.
$
q^{\frac 12 \Delta}
\mathcal{A}_L  
\mathcal{A}_C  
\mathcal{A}_R 
q^{\frac 12 \Delta}
 \mathsf{T}
\psi 
=\psi,
$
is equivalent to the following inversion symmetry
\be
\mathcal{A}'_{C+}  
\mathcal{A}'_R 
 \mathcal{T}^{\frac 12} 
\psi^{\rm Poch} 
=
\overline{
\mathcal{A}'_{C+}  
\mathcal{A}'_R 
\mathcal{T}^{\frac 12} 
\psi^{\rm Poch} 
 },
\label{eq:connectionPoch}
\ee
i.e.
\be
\ :\prod_{i=1}^N
\frac{
e_{q}\left(x_i\sqrt{  q { d_i} /{\overline{d}_i }   }\right)
}{
e_{q}\left({x_i } q^{\vartheta_i - \vartheta_{i-1}}
{\sqrt{  q d_i{\dbar{i}} }}\right)
}:
 \mathcal{T}^{\frac 12} 
\psi^{\rm Poch} 
=
\ :\prod_{i=1}^N
\frac{
e_{q^{-1}}\left(x_i\sqrt{  {\overline{d}_i }  / q{ d_i}   } \right)
}{
e_{q^{-1}}\left({x_i }q^{-\vartheta_i + \vartheta_{i-1}}
/{\sqrt{  q d_i {\dbar{i}}  }} \right)
}:
\mathcal{T}^{-\frac 12} 
\overline{
\psi^{\rm Poch} 
}.
\label{eq:InversionDbar}
\ee
\end{prp}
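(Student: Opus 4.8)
The plan is to show that the inversion symmetry $(\ref{eq:connectionPoch})$ is nothing but the rewritten form of the non-stationary equation displayed just above the Proposition, read in a different order, and that every manipulation used is reversible, so that the two statements are equivalent. Set
\[
\Phi \ :=\ \mathcal{A}'_{C+}\,\mathcal{A}'_R\,\mathcal{T}^{1/2}\,\NekZ{\vlambda}{\vx},
\]
so that the assertion of the Proposition is $\Phi=\overline{\Phi}$. Since $\NekZ{\vlambda}{\vx}=(q^{\Delta}\mathsf{T}/\mathcal{T})^{1/2}\NekZs{\vlambda}{\vx}$, and since $\mathcal{A}'_{C+}$ and $\mathcal{A}'_{C-}$ commute (both act by multiplication by functions of the commutative variables $x_i$ alone), the rewritten equation is precisely
\[
(q^{\Delta}\mathsf{T}/\mathcal{T})^{1/2}\,\mathcal{T}^{1/2}\,\mathcal{A}'_L\,\mathcal{A}'_{C-}\,\Phi \ =\ \NekZs{\vlambda}{\vx},
\]
once one regroups the four rightmost factors $\mathcal{A}'_{C+},\mathcal{A}'_R,\mathcal{T}^{1/2},(q^{\Delta}\mathsf{T}/\mathcal{T})^{1/2}$ as acting successively on $\NekZs{\vlambda}{\vx}$, the first of them producing $\NekZ{\vlambda}{\vx}$.

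First I would solve this for $\Phi$. The operators $q^{\Delta}$, $\mathsf{T}$, $\mathcal{T}$ are all diagonal in the monomial basis (each multiplies $x^{\nu}$ by a scalar), hence mutually commuting and invertible, and $\mathcal{A}'_L$, $\mathcal{A}'_{C-}$ are invertible as well. Left-multiplying by the inverse of $(q^{\Delta}\mathsf{T}/\mathcal{T})^{1/2}\mathcal{T}^{1/2}\mathcal{A}'_L\mathcal{A}'_{C-}$ gives
\[
\Phi \ =\ \mathcal{A}'_{C-}{}^{-1}\,\mathcal{A}'_L{}^{-1}\,\mathcal{T}^{-1/2}\,(q^{\Delta}\mathsf{T}/\mathcal{T})^{-1/2}\,\NekZs{\vlambda}{\vx}.
\]
The second step is to recognize the right-hand side as $\overline{\Phi}$. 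The bar operation substitutes $a_i,b_i,c_i,q,\kappa$ (hence also $t$) by their inverses and leaves $x_i$ and $\vartheta_i$ untouched, so it is an order-preserving algebra homomorphism that commutes with the normal ordering. From $d_i=q\kappa b_i/a_{i+1}$ and $\overline{d}_i=b_i/c_i$ one gets $\overline{d_i}=d_i^{-1}$, $\overline{\overline{d}_i}=\overline{d}_i^{-1}$ and $\overline{\kappa}=\kappa^{-1}$, whence $\overline{\mathsf{T}}=\mathsf{T}^{-1}$, $\overline{\mathcal{T}}=\mathcal{T}^{-1}$, $\overline{q^{\Delta}}=q^{-\Delta}$, so that $\overline{\mathcal{T}^{1/2}}=\mathcal{T}^{-1/2}$ and $\overline{(q^{\Delta}\mathsf{T}/\mathcal{T})^{1/2}}=(q^{\Delta}\mathsf{T}/\mathcal{T})^{-1/2}$. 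Combining these with the sinh-type invariance $\overline{\NekZs{\vlambda}{\vx}}=\NekZs{\vlambda}{\vx}$ and the two identities $(\ref{eq:C+C-})$, namely $\mathcal{A}'_{C-}{}^{-1}=\overline{\mathcal{A}'_{C+}}$ and $\mathcal{A}'_L{}^{-1}=\overline{\mathcal{A}'_R}$, the displayed formula for $\Phi$ becomes $\overline{\mathcal{A}'_{C+}\,\mathcal{A}'_R\,\mathcal{T}^{1/2}\,(q^{\Delta}\mathsf{T}/\mathcal{T})^{1/2}\,\NekZs{\vlambda}{\vx}}=\overline{\Phi}$. Hence $\Phi=\overline{\Phi}$, which is $(\ref{eq:connectionPoch})$; writing $\mathcal{A}'_R$ in its normal-ordered form and $\mathcal{A}'_{C\pm}$ explicitly turns this into $(\ref{eq:InversionDbar})$. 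Conversely, from $\Phi=\overline{\Phi}$ one rewrites $\overline{\Phi}$ through the same identities and re-applies $(q^{\Delta}\mathsf{T}/\mathcal{T})^{1/2}\mathcal{T}^{1/2}\mathcal{A}'_L\mathcal{A}'_{C-}$ on the left, recovering the rewritten equation and hence $(\ref{eq:EigenEq})$; so the equivalence runs both ways.

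Most of this is bookkeeping once the dictionary between $\NekZ{\vlambda}{\vx}$ and $\NekZs{\vlambda}{\vx}$ and the action of the bar are fixed. The step I expect to require genuine care, and which I would isolate as a short lemma, is the behaviour of the bar involution: that it is an order-preserving homomorphism commuting with the normal ordering, and that it sends each of $d_i$, $\overline{d}_i$, $\kappa$, $q$, and therefore each of $\mathsf{T}$, $\mathcal{T}$, $q^{\Delta}$, to its inverse. Granting this lemma, the argument reduces to the invertibility of the commuting diagonal operators and straightforward cancellation, with no further input beyond the relations already established in the excerpt.
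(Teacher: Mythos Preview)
Your argument is correct and follows exactly the line the paper intends: you solve the rewritten form of $(\ref{eq:EigenEq})$ for the right half $\Phi=\mathcal{A}'_{C+}\mathcal{A}'_R\mathcal{T}^{1/2}\NekZ{\vlambda}{\vx}$, then identify the left half with $\overline{\Phi}$ via $(\ref{eq:C+C-})$, the diagonal behaviour of $q^{\Delta},\mathsf{T},\mathcal{T}$ under the bar, and the invariance $\overline{\NekZs{\vlambda}{\vx}}=\NekZs{\vlambda}{\vx}$. The paper states these ingredients and then simply asserts the Proposition; you have made the bookkeeping and the reversibility explicit, which is what was implicitly being asked.
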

%
Note that \eqref{eq:connectionPoch} is also equivalent to the equation where $C+$ is replaced by $C-$.



\section{Instanton expansion with mass truncation}
\label{App:Combi}

Let us examine the instanton expansion of the partition function with mass parameter truncation. 
For  the $\widehat{\mathfrak{gl}}_3$ case, the truncation condition is
\begin{equation}
d_1 = q^{-m_1}, \qquad d_2 = q^{-m_2}, \qquad d_3 = q^{-m_3}.
\end{equation}
Recall that the partition function is a summation over triplets of the Young diagrams $\vec{\lambda} = (\lambda^{(1)}, \lambda^{(2)}, \lambda^{(3)})$. 
After the mass parameter truncation, the summation is restricted to the triplets such that the length of the first row of $\lambda^{(i)}$ is at most $m_i$. 
Set $M:= m_1 + m_2 + m_3$. In the main text we argued that the rank of the $q$-difference equation for the partition function is
$\frac{1}{2}(M+1)(M+2)$ which depends only on the sum of $(m_1, m_2, m_3)$. For each column of the Young diagram $\lambda^{(i)}$ 
we define its shifted residue by $((\lambda^{(i)})^\vee_k +i -1), k=1, \ldots m_i$, where $(\lambda^{(i)})^\vee_k$ is the length of the $k$-th column 
and $(\bullet)$ means the residue of the integer module $3$. When the coloring of $\lambda^{(i)}$ is such that the color of the first row is $i$,
the shifted residue agrees with the color (the number) of the end box of each column. 

\subsection{The case $M=2$}

The rank of the $q$-difference system is $6$.
\begin{enumerate}
\item $(m_1, m_2, m_3) = (2,0,0)$; There are 9 possibilities of the shifted resides of the first two columns of the first Young diagram.

In the table below, $r_k$ is the number of columns with the shifted residue $k$ (see Subsection \ref{sec:Jackson-cocycle}). 
Note that $r_0+r_1+r_2=2=M$ and the set of possible $(r_0, r_1,r_2)$ has $\frac{1}{2}(M+1)(M+2)$ elements,
which agrees with the rank of the truncated $q$-difference system. 
We have introduced $z_1=x_1, z_2= x_1x_2$ and $z_3=x_1x_2x_3 \equiv 1$. 
Then the Young diagram with $(r_0, r_1, r_2)$ contributes to $z_1^{r_1} z_2^{r_2} z_3^{r_0}$,
which is a monomial in $z_1$ with homogeneous degree $M=2$. 

\begin{table}[h]
\begin{center}
\begin{tabular}{c|c|c|c}
Shifted residues & Contribues to & $(r_0, r_1, r_2)$ & homogeneous monomial \\ \hline
$(1,1)$ & $x_1^2 \Lambda^k$  & $(0,2,0$) & $z_1^2$ \\
$(2,1)$ & $x_1^2 x_2 \Lambda^k$  & $(0,1,1$) & $z_1 z_2$ \\
$(0,1)$ & $x_1 \Lambda^k$  & $(1,1,0$) & $z_1 z_3$ \\
$(1,2)$ & $x_1^2 x_2 \Lambda^k$  & $(0,1,1$) & $z_1 z_2$ \\
$(2,2)$ & $x_1^2 x_2^2 \Lambda^k$  & $(0,0,2$) & $z_2^2$ \\
$(0,2)$ & $x_1 x_2 \Lambda^k$  & $(1,0,1$) & $z_2 z_3$ \\
$(1,0)$ & $x_1 \Lambda^k$  & $(1,1,0$) & $z_1 z_3$ \\
$(2,0)$ & $x_1 x_2 \Lambda^k$  & $(1,0,1$) & $z_2 z_3$ \\
$(0,0)$ & $\Lambda^k$  & $(2,0,0$) & $ z_3^2$ \\
\end{tabular}
\end{center}
\end{table}

\item $(m_1, m_2, m_3) = (1,1,0)$; There are 9 possibilities of the shifted resides of the first columns of two Young diagrams. 
The Young diagram with $(r_0, r_1, r_2)$ contributes to $x_1^{-1} z_1^{r_1} z_2^{r_2} z_3^{r_0}$.
Compared with the first case, the monomials are uniformly shifted by $x_1^{-1}$. 

\begin{table}[h]
\begin{center}
\begin{tabular}{c|c|c|c}
Shifted residues & Contribues to & $(r_0, r_1, r_2)$ & homogeneous monomial \\ \hline
$(1;2)$ & $x_1 x_2 \Lambda^k$  & $(0,1,1$) & $x_1^{-1} z_1 z_2$ \\
$(2;2)$ & $x_1 x_2^2 \Lambda^k$  & $(0,0,2$) & $x_1^{-1} z_2^2$ \\
$(0;2)$ & $x_2 \Lambda^k$  & $(1,0,1$) & $x_1^{-1} z_2 z_3$ \\
$(1;0)$ & $ \Lambda^k$  & $(1,1,0$) & $x_1^{-1} z_1 z_3$ \\
$(2;0)$ & $ x_2 \Lambda^k$  & $(1,0,1$) & $x_1^{-1} z_2 z_3 $ \\
$(0;0)$ & $x_1^{-1} \Lambda^k$  & $(2,0,0$) & $x_1^{-1} z_3^2$ \\
$(1;1)$ & $x_1 \Lambda^k$  & $(0,2,0$) & $x_1^{-1} z_1^2$ \\
$(2;1)$ & $x_1 x_2 \Lambda^k$  & $(0,1,1$) & $x_1^{-1} z_1 z_2$ \\
$(0;1)$ & $\Lambda^k$  & $(1,1,0$) & $x_1^{-1}  z_1 z_3$ \\
\end{tabular}
\end{center}
\end{table}

\end{enumerate}

Other four cases $(m_1, m_2, m_3)=(0,2,0),(0,0,2), (1,0,1), (0,1,1)$ are obtained by the cyclic permutation of
$(x_1, x_2, x_3)$. 

\subsection{The case $M=3$}
The rank of the $q$-difference system is $10$. There are $10$ possibilities of $(m_1, m_2,m_3)$, which coincides with the rank. 
They are $(3,0,0)$ and its cyclic permutations (3 cases),  $(2,1,0)$ and its permutations (6 cases) and $(1,1,1)$. 
In each case, there are $3^3 = 27$ possibilities of the three shifted residues, which are classified according to 
$(r_0, r_1,r_2)$ with $r_0+r_1+r_2=3$ as follows;

\begin{table}[h]
\begin{center}
\caption{The number of cases in the table is the number of terms 
involved in the formula \eqref{N=3;cocycle} of the bases $\phi_{(r_0,r_1,r_3)}(z)$ of the cocycle function.}
\begin{tabular}{c|c||c|c}\label{M=3}
$(r_0,r_1,r_2)$ & Number of cases &$ (r_0,r_1,r_2)$ & Number of cases \\ \hline
$(3,0,0)$ & 1 & $(2,1,0)$ & 3 \\
$(2,0,1)$ & 3 & $(1,2,0)$ & 3 \\
$(1,1,1)$ & 6 & $(1,0,2)$ & 3 \\
$(0,3,0)$ & 1 & $(0,2,1)$ & 3 \\
$(0,1,2)$ & 3 & $(0,0,3)$ & 1
\end{tabular}
\end{center}
\end{table}

\begin{enumerate}
\item $(m_1,m_2,m_3)=(3,0,0)$;
The Young diagrams with $(r_0, r_1, r_2)$ contribute to $z_1^{r_1} z_2^{r_2} z_3^{r_0} \equiv z_1^{r_1} z_2^{r_2}$. 

\item $(m_1,m_2,m_3)=(2,1,0)$;
The Young diagrams with $(r_0, r_1, r_2)$ contribute to $x_1^{-1} z_1^{r_1} z_2^{r_2} z_3^{r_0} \equiv z_1^{r_1-1} z_2^{r_2}$. 

\item $(m_1,m_2,m_3)=(1,1,1)$;
The Young diagrams with $(r_0, r_1, r_2)$ contribute to $x_1^{-2} x_2^{-1}z_1^{r_1} z_2^{r_2} z_3^{r_0} \equiv z_1^{r_1-1} z_2^{r_2-1}$. 
\end{enumerate}

\medskip 

The allowed terms in the Laurent polynomial in $(x_1, x_2)$ are plotted in Figure \ref{Fig:mass-truncation}.
The fundamental triangle for the case $\mathbf{m} = (3,0,0)$ has the vertices $(0,0), (3,0), (3,3)$. 
The triangle for the general case $\mathbf{m} = (m_1,m_2,m_3)$ is obtained  from the fundamental triangle 
by $-(m_2+m_3)$-shift in $x_1$ direction and $-m_3$-shift in $x_2$ direction. 
We also note that these vertices come from the Young diagrams whose shifted resides are the same, namely $(r_0, r_1, r_2)
= (3,0,0), (0,3,0), (0,0,3)$.


\begin{figure}[h]
\vspace{20mm}
\begin{center}
\begin{picture}(60,30)
\setlength{\unitlength}{0.9mm}
\thicklines
\put(-50,10){\vector(1,0){30}}
\put(-45,0){\vector(0,1){30}}
\put(-30,10){\line(0,1){15}}
\put(-45,10){\line(1,1){15}}
\put(-45,10){\circle{2}}
\put(-40,10){\circle{2}}
\put(-35,10){\circle{2}}
\put(-30,10){\circle{2}}
\put(-40,15){\circle{2}}
\put(-35,15){\circle{2}}
\put(-30,15){\circle{2}}
\put(-35,20){\circle{2}}
\put(-30,20){\circle{2}}
\put(-30,25){\circle{2}}

\put(-52,28){$x_2$}
\put(-27,5){$x_1$}
\put(-50,35){$\mathbf{m}=(3,0,0)$}
%
%
\put(-10,10){\vector(1,0){30}}
\put(0,0){\vector(0,1){30}}
\put(10,10){\line(0,1){15}}
\put(-5,10){\line(1,1){15}}
\put(-5,10){\circle{2}}
\put(0,10){\circle{2}}
\put(5,10){\circle{2}}
\put(10,10){\circle{2}}
\put(0,15){\circle{2}}
\put(5,15){\circle{2}}
\put(10,15){\circle{2}}
\put(5,20){\circle{2}}
\put(10,20){\circle{2}}
\put(10,25){\circle{2}}

\put(-7,28){$x_2$}
\put(18,5){$x_1$}
\put(-5,35){$\mathbf{m}=(2,1,0)$}

%
%
\put(30,10){\vector(1,0){30}}
\put(45,0){\vector(0,1){30}}
\put(35,5){\line(1,0){15}}
\put(50,5){\line(0,1){15}}
\put(35,5){\line(1,1){15}}
\put(35,5){\circle{2}}
\put(40,5){\circle{2}}
\put(45,5){\circle{2}}
\put(50,5){\circle{2}}
\put(40,10){\circle{2}}
\put(45,10){\circle{2}}
\put(50,10){\circle{2}}
\put(45,15){\circle{2}}
\put(50,15){\circle{2}}
\put(50,20){\circle{2}}

\put(38,28){$x_2$}
\put(58,5){$x_1$}
\put(35,35){$\mathbf{m}=(1,1,1)$}
\end{picture}
\vspace{2mm}
\caption{After the mass truncation $d_i=q^{-m_i}$, the affine Laumon partition function becomes
a Laurent polynomial in $(x_1, x_2)$, while it is still a formal power series in $\Lambda$.
The circles represent the positions of allowed terms in the $(x_1,x_2)$-lattice. 
The total number of the circles agrees with the rank of the truncated $q$-difference equation.}
\label{Fig:mass-truncation}
\end{center}
\end{figure}

\subsection{General $M$}
From the above examples we now see the general rule for the possible terms of the partition function after the mass truncation. 
In the case $\mathbf{m}= (M,0,0)$ the vertices of the triangle is $(0,0), (M,0)$ and $(M,M)$.
In general case the three vertices are determined by considering the Young diagrams with $\mathbf{r} = (M,0,0), (0,M,0)$ and $(0,0,M)$.
Each case gives the following contribution;
\begin{enumerate}
\item $\mathbf{r} = (M,0,0)$; the shifted residues are 
$(~\overbrace{0, \ldots, 0}^{m_1}~;~\overbrace{0, \ldots, 0}^{m_2}~;~\overbrace{0, \ldots, 0}^{m_3}~)$,
which gives the terms with $(x_2x_3)^{m_2} x_3^{m_3} \Lambda^{k}=x_1^{-m_2-m_3} x_2^{-m_3} \Lambda^{k+m_2+m_3}$. 
\item $\mathbf{r} = (0,M,0)$; the shifted residues are 
$(~\overbrace{1, \ldots, 1}^{m_1}~;~\overbrace{1, \ldots, 1}^{m_2}~;~\overbrace{1, \ldots, 1}^{m_3}~)$,
which gives the terms with $(x_1)^{m_1} (x_3x_1)^{m_3} \Lambda^{k}=x_1^{m_1} x_2^{-m_3} \Lambda^{k+m_3}$. 
\item $\mathbf{r} = (0,0,M)$; the shifted residues are 
$(~\overbrace{2, \ldots, 2}^{m_1}~;~\overbrace{2, \ldots, 2}^{m_2}~;~\overbrace{2, \ldots, 2}^{m_3}~)$, 
which gives the terms with $(x_1x_2)^{m_1} (x_2)^{m_2} \Lambda^{k}=x_1^{m_1} x_2^{m_1+m_2} \Lambda^{k}$. 
\end{enumerate}
Hence the vertices are $(-m_2-m_3, -m_3), (m_1, -m_3)$ and $(m_1, m_1+m_2)$, We see that they are $(-m_2-m_3, -m_3)$-shift of 
$(0,0), (M,0)$ and $(M,M)$.
The boundary of the shifted triangle is $x_1=m_1, x_2 = - m_3$ and $x_2=x_1+m_2$. (See Figure \ref{Fig:shifted triangle}).


\begin{figure}[t]
\vspace{20mm}
\begin{center}
\begin{picture}(40,20)
\setlength{\unitlength}{0.9mm}
\thicklines
\put(-10,15){\vector(1,0){50}}
\put(11,0){\vector(0,1){40}}
\put(25,5){\line(-1,0){30}}
\put(25,5){\line(0,1){30}}
\put(-5,5){\line(1,1){30}}
%
\put(27,18){$m_1$}
\put(3,22){$m_2$}
\put(0,8){$-m_2$}
\put(12,0){$-m_3$}
\put(27,35){$(m_1,m_1+m_2)$}
\put(27,0){$(m_1,-m_3)$}
\put(-30,0){$(-m_2-m_3,-m_3)$}
\put(5,35){$x_2$}
\put(38,10){$x_1$}
\end{picture}
\vspace{2mm}
\caption{The triangle on the $(x_1, x_2)$ lattice which indicates possible terms in the instanton expansion 
after the mass truncation by $\mathbf{m}=(m_1,m_2,m_3)$.}
\label{Fig:shifted triangle}
\end{center}
\end{figure}


\subsection{Generalization to $\glN$}

Now it is easy to figure out the combinatorics for $\glN$ case. 
It is convenient to introduce the following coordinates;
\begin{equation}\label{z-coordinates}
z_1 = x_1, \quad z_2 = x_1 x_2, \quad \ldots \quad z_{N-1}=x_1x_2 \cdots x_{N-1}, \quad z_N = x_1 \cdots x_N = \Lambda \equiv 1.
\end{equation}
We also introduce the fundamental $(N-1)$-dimensional polyhedron $\Delta^{(N-1)}$ in $(x_1, \ldots, x_{N-1})$ space.
The vertices of $\Delta^{(N-1)}$ are $\mathbf{v}_k:= (\overbrace{M, \ldots, M}^{k}, \overbrace{0, \ldots, 0}^{N-1-k}), k=0,\cdots, N-1$. 
In terms of the coordinates \eqref{z-coordinates}, these vertices correspond to $z_N^M, z_1^M, \ldots, z_{N-1}^M$, respectively. 
By the Pascal's relation
\begin{equation}
\binom{N+M-1}{M} - \binom{N+M-2}{M-1} = \binom{N+M-2}{M},
\end{equation}
one can check by induction that the number of the lattice points in $\Delta^{(N-1)}$ or on the boundary of $\Delta^{(N-1)}$ is $\binom{N+M-1}{N-1}$
as it should be. Another way to see it is to note that under the identification $z_N \equiv 1$,
the lattice points in $\Delta^{(N-1)}$ are in one to one correspondence 
with the monomials in $z_i$ with homogeneous degree $M$. 
\begin{itemize}
\item
When the mass truncation is given by $\mathbf{m} = (m_1, m_2, \ldots, m_N)$, we make the shift by $(-m_{i+1}- \cdots -m_N)$ 
in $x_i$ coordinate to obtain the shifted polyhedron $\Delta^{(N-1)}(\mathbf{m})$. The possible terms in the instanton expansion of
the partition function correspond to the lattice points in $\Delta^{(N-1)}(\mathbf{m})$ or on its boundary. 
\item
If the $N$-tuple of the Young diagrams has the shifted residue $\mathbf{r}=(r_0, r_1, \ldots, r_{N-1})$, it contribute to
the coefficient of $z_1^{r_1-m_2}z_2^{r_2-m_3} \cdots z_{N-1}^{r_{N-1}-m_N} z_N^{r_0}$.
This follows from the fact that a column of $\lambda^{(i)}$ has the shifted residue $k$ gives the factor $z_{i-1}^{-1} z_k$
up to a power of $\Lambda = z_N$.
\end{itemize}


\bigskip


\end{document}